\newcommand{\R}{{\mathbb{R}}}
\newcommand{\diff}{\mathrm{d}}
\newcommand{\cU}{\mathcal{U}}
\newcommand{\cT}{\mathcal{T}}
\newcommand{\Prob}{\mathbb{P}}
\renewcommand{\leq}{\leqslant}
\renewcommand{\geq}{\geqslant}
\definecolor{ForestGreen}{cmyk}{0.91,0,0.88,0.12}
\colorlet{pierrem}{ForestGreen}
\DeclareMathOperator*{\argmin}{arg\,min}
\newtheorem{theorem}{Theorem}
\newtheorem{lemma}{Lemma}
\newtheorem{proposition}{Proposition}
\newtheorem{definition}{Definition}
\newtheorem{remark}{Remark}
\title{
Leveraging the two-timescale regime to~demonstrate~convergence~of~neural~networks}
\author{%
  Pierre Marion \\
  Sorbonne Université, CNRS, \\ Laboratoire de Probabilités, Statistique et Modélisation, LPSM, \\
  F-75005 Paris, France \\
  \texttt{pierre.marion@sorbonne-universite.fr} \\
   \And
   Raphaël Berthier \\
   EPFL, Switzerland \\
   \texttt{raphael.berthier@epfl.ch} 
}
\date{\today}
\begin{document}

\maketitle

\begin{abstract}
We study the training dynamics of shallow neural networks, in a two-timescale regime in which the stepsizes for the inner layer are much smaller than those for the outer layer. In this regime, we prove convergence of the gradient flow to a global optimum of the non-convex optimization problem in a simple univariate setting. The number of neurons need not be asymptotically large for our result to hold, distinguishing our result from popular recent approaches such as the neural tangent kernel or mean-field regimes. Experimental illustration is provided, showing that the stochastic gradient descent behaves according to our description of the gradient flow and thus converges to a global optimum in the two-timescale regime, but can fail outside of this regime.
\end{abstract}

\section{Introduction}  
\label{sec:intro}

Artificial neural networks are among the most successful modern machine learning methods, in particular because their non-linear parametrization provides a flexible way to implement feature learning \citep[see, e.g.,][chapter 15]{goodfellow2016deep}. Following this empirical success, a large body of work has been dedicated to understanding their theoretical properties, and in particular to analyzing the optimization algorithm used to tune their parameters. It usually consists in minimizing a loss function through stochastic gradient descent (SGD) or a variant \citep{bottou2018optimization}. However, the non-linearity of the parametrization implies that the loss function is non-convex, breaking the standard convexity assumption that ensures global convergence of gradient descent algorithms.

In this paper, we study the training dynamics of \emph{shallow} neural networks, i.e., of the form 
\begin{equation*}
    f(x;a,u) = a_0 + \sum_{j=1}^m a_j g(x;u_j) \, ,
\end{equation*}
where $m$ denotes the number of hidden neurons, $a = (a_0, \dots, a_m)$ and $u = (u_1, \dots, u_m)$ denote respectively the outer and inner layer parameters, and $g(x;u)$ denotes a non-linear function of $x$ and~$u$. The novelty of this work lies in the use of a so-called \emph{two-timescale regime} \citep{borkar1997stochastic} to train the neural network: we set stepsizes for the inner layer $u$ to be an order of magnitude smaller than the stepsizes of the outer layer~$a$. This ratio is controlled by a parameter $\varepsilon$. In the regime $\varepsilon \ll 1$, the neural network can be thought of as a fitted linear regression with slowly evolving features $g(x;u_j)$, $j=1, \dots, m$: this reduction enables us to precisely describe the movement of the inner layer parameters $u_j$. 

Our approach proves convergence of the \textit{gradient flow} to a global optimum of the non-convex landscape with a \emph{fixed} number $m$ of neurons. The gradient flow can be seen as the simplifying yet insightful limit of the SGD dynamics as the stepsize $h$ vanishes.
Proving convergence with a fixed number of neurons contrasts with two other popular approaches that require to take the limit $m\to \infty$: the neural tangent kernel \citep{jacot2018neural,allen2019convergence,du2019gradient,zou2020gradient} and the mean-field approach \citep{chizat2018global,mei2018mean,rotskoff2018neural,sirignano2020mean}. As a consequence, this paper is intended as a step towards understanding feature learning with a moderate number of neurons. 

While our approach through the two-timescale regime is general, our description of the solution of the two-timescale dynamics and our convergence results are specific to a simple example showcasing the approach. More precisely, we consider univariate data $x \in [0,1]$ and non-linearities of the form $g(x;u_j) = \sigma( \eta^{-1}(x-u_j))$, where $u_j$ is a variable translation parameter, $\eta$ is a fixed dilatation parameter, and $\sigma$ is a sigmoid-like non-linearity.
Finally, we restrict ourselves to the approximation of piecewise constant functions. 

\paragraph{Organization of this paper.} In Section \ref{sec:main}, we detail our setting and state our main theorem on the convergence of the gradient flow to a global optimum.
Section \ref{sec:related} articulates this paper with related work. 
Section \ref{sec:heuristics} provides a self-contained introduction to the \textit{two-timescale limit} $\varepsilon \to 0$. We explain how it simplifies the analysis of neural networks, and provides heuristic predictions for the movement of neurons in our setting. 
Section \ref{sec:proof} gives a rigorous derivation of our result.  
We prove convergence first in the two-timescale limit $\varepsilon \to 0$, then in the two-timescale regime with $\varepsilon$ small but positive. 
Section \ref{sec:experiments} presents numerical experiments showing that the SGD dynamics follow closely those of the gradient flow in the two-timescale regime, and therefore exhibit  convergence to a global optimum. On the contrary, SGD can fail to reach a global optimum outside of the two-timescale regime.

\section{Setting and main result}
\label{sec:main}

We present a setting in which a piecewise constant univariate function $f^*:[0,1] \to \R$ is learned with gradient flow on a shallow neural network. Our notations are summarized on Figure \ref{fig:notations}. We begin by introducing our class of functions of interest. 

\begin{minipage}[c]{0.5\textwidth}
\begin{definition}
\label{def:piecewise-constant}

    Let $n \geqslant 2$, $\Delta v \in (0, 1)$, $\Delta f > 0$ and $M \geqslant 1$. We denote $\mathcal{F}_{n, \Delta v, \Delta f, M}$ the class of functions $f^*:[0,1] \to \R$ satisfying the following conditions:
    \begin{itemize}[leftmargin=0.5cm]
        \item $f^*$ is piecewise constant: there exists 
        $$0= v_0 < v_1 < \dots < v_{n-1} < v_n = 1$$
        and $f^*_0, \dots, f^*_{n-1} \in \R$ such that 
        $$\forall x \in (v_{i}, v_{i+1}), f^*(x) = f^*_i,$$
        \item for all $i \in \{1, \dots, n\}, v_i - v_{i-1} \geqslant \Delta v$,
        \item for all $i \in \{1, \dots, n-1\}, |f^*_i - f^*_{i-1}| \geqslant \Delta f$, 
        \item for all $i \in \{0, \dots, n-1\}$, $|f^*_i| \leqslant M$. 
    \end{itemize}
\end{definition}
\end{minipage}
\hspace{0.05\textwidth}%
\begin{minipage}[c]{0.44\textwidth}
 \centering
 \captionsetup{type=figure}
 \includegraphics[width=\textwidth]{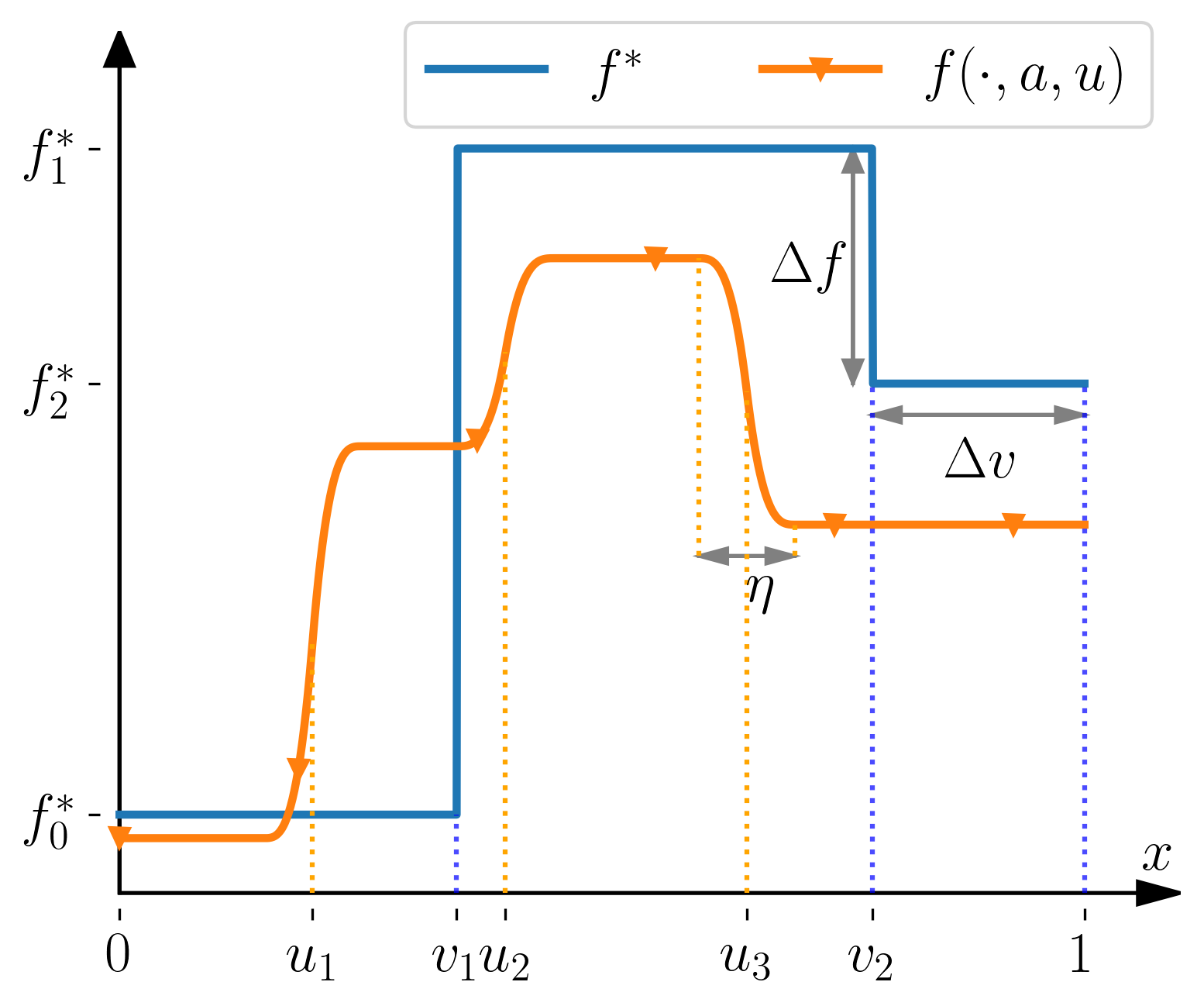}
 \captionof{figure}{Notations of the paper. The target $f^*$ is in blue and the neural network $f(\cdot;a,u)$ in orange.}
 \label{fig:notations}
\end{minipage}

Let us now define our class of neural networks. Consider $\sigma:\R \to \R$ an increasing, twice continuously differentiable non-linearity such that $\sigma(x) = 0$ if $x \leqslant -1/2$, $\sigma(x) = 1$ if $x \geqslant 1/2$, and $\sigma - 1/2$ is odd. Then, our class of shallow neural networks is defined by
\begin{align*}
     &f(x;a,u) = a_0 + \sum_{j=1}^m a_j \sigma_\eta(x-u_j)  \, , 
     &&\sigma_\eta(x) = \sigma(\eta^{-1} x) \, ,
\end{align*}
where $0<\eta\leq1$ measure the sharpness of the non-linearity $\sigma_\eta$. Note that that inner layer parameter~$u_j$ determines the translation of the non-linearity; no parameterized multiplicative operation on $x$ is performed in this layer.
We refer to the parameter $u$ as the ``positions'' of the neurons (or, sometimes, simply as the ``neurons'') and to the parameter $a$ as the ``weights'' of the neurons. We define the quadratic loss as
\[
L(a,u) = \frac{1}{2} \int_0^1 \left( f^*(x)-f(x;a,u) \right)^2 \diff x\, .
\]
We use gradient flow on $L$ to fit the parameters $a$ and $u$: they evolve according to the dynamics
\begin{align}    \label{eq:gf-recovery}
&\frac{\diff a}{\diff t}(t) = - \nabla_a L (a(t), u(t)) \, , 
&&\frac{\diff u}{\diff t}(t) = - \varepsilon \nabla_u L (a(t), u(t)) \, , 
\end{align}
where $\varepsilon$ corresponds to the ratio of the stepsizes of the two iterations.

\paragraph{Main result.} By leveraging the two-timescale regime where $\varepsilon$ is small, our theorem shows that, with high probability, a neural network trained with gradient flow is able to recover an arbitrary piecewise constant function to an arbitrary precision. The proof is relegated to the Appendix.

\begin{theorem} \label{thm:main}
Let $\xi, \delta > 0$, and $f^*$ a piecewise constant function from $\mathcal{F}_{n, \Delta v, \Delta f, M}$.
Assume that the neural network has $m$ neurons with 
\begin{equation} \label{eq:lower-bound-m}
m \geqslant \frac{6}{\Delta v} \big(4 + \log n +  \log \frac{1}{\delta} \big) \, .
\end{equation}
Assume that, at initialization, the positions $u_1, \dots, u_m$ of the neurons are i.i.d.~uniformly distributed on $[0,1]$ and their weights $a_0, \dots, a_m$ are equal to zero. 

Then there exists $Q_1 > 0$ and $Q_2 > 0$ depending on $\xi, \delta, m, \Delta f, M$ such that, if 
\begin{align}
\label{eq:conditions}
    &\eta \leqslant Q_1 \, ,&&\varepsilon \leqslant Q_2 \, , 
\end{align}
then, with probability at least $1 - \delta$, the solution to the gradient flow \eqref{eq:gf-recovery} is defined at least until $T = \frac{6}{\varepsilon (\Delta f)^2}$, and
\begin{align*}
    &\int_0^1 \left|f^*(x) - f(x;a(T), u(T))\right|^2 \diff x \leqslant \xi \, .
\end{align*}
Further,
$
\displaystyle Q_1 = \frac{C_1}{M^2 (m+1)} \min \Big(\frac{\delta^2 (\Delta f)^2}{(m+1)^4}, \xi \Big)$ and $\displaystyle Q_2 = \frac{C_2 \delta^2}{M^4 (m+1)^{17/2}} \min \Big(\frac{\delta (\Delta f)^2}{m+1}, \xi \Big)$
for some universal constants $C_1, C_2 > 0$.
\end{theorem}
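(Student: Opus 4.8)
The plan is to decouple the fast and slow variables, solve the slow dynamics in the two-timescale limit $\varepsilon \to 0$, and then transfer the conclusion to small positive $\varepsilon$ by a quantitative singular-perturbation argument. Because the weights $a$ evolve on the fast timescale, for any frozen configuration of positions $u$ they relax to the minimizer $a^\star(u) = \argmin_a L(a,u)$ of the quadratic $a \mapsto L(a,u)$, which is strictly convex as long as the features are linearly independent. I would introduce the reduced loss $\ell(u) = \min_a L(a,u) = L(a^\star(u), u)$ and, using the envelope identity $\nabla_u \ell(u) = \nabla_u L(a^\star(u),u)$, argue that in the rescaled time $\tau = \varepsilon t$ the positions follow the gradient flow $\frac{\diff u}{\diff \tau} = -\nabla_u \ell(u)$ in the limit $\varepsilon \to 0$. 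The target horizon $T = 6/(\varepsilon (\Delta f)^2)$ is exactly $\tau = 6/(\Delta f)^2$ in rescaled time, so the whole analysis of the slow motion takes place on an interval of length $O(1/(\Delta f)^2)$.

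The core is to analyze this reduced gradient flow. Since $\eta \leq Q_1$ is small, each feature $\sigma_\eta(\cdot - u_j)$ sharply approximates the Heaviside step at $u_j$, so I would first replace the network by a pure step-function model, tracking the $O(\eta)$ discrepancy in both $\ell$ and its gradient, and then reduce the problem to understanding how step locations move under gradient flow on the least-squares fit to $f^*$. The heuristic, to be made rigorous, is that a neuron near a jump $v_i$ acquires an optimal weight of magnitude comparable to the jump height $|f^*_i - f^*_{i-1}| \geq \Delta f$, and that the resulting force pulls it toward $v_i$ at a rescaled speed of order $(\Delta f)^2$, while neurons lying in the interior of a constant piece of $f^*$ receive essentially zero weight and drift negligibly. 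I would make this precise by partitioning positions according to which jump they are ``assigned'' to, showing that this assignment is stable along the flow, and proving a one-dimensional attraction estimate guaranteeing that within rescaled time $6/(\Delta f)^2$ at least one neuron reaches an $O(\eta)$ neighborhood of every jump initially approached by a neuron. Once every jump of $f^*$ is covered, the optimal weights reconstruct $f^*$ up to the $O(\eta)$ smoothing error, so $\ell(u) \leq \xi$ provided $\eta$ is small enough, which is where the first factor in $Q_1$ enters.

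The attraction argument only works if, at initialization, each jump already has a neuron in its basin, i.e.\ a neuron close enough on the correct side. This is where the lower bound on $m$ and the i.i.d.\ uniform initialization are used: I would tile $[0,1]$ into $6/\Delta v$ sub-intervals of length $\Delta v/6$ and bound the probability that any sub-interval adjacent to a jump contains no neuron. Each such event has probability $(1 - \Delta v/6)^m \leq e^{-m\Delta v/6}$, and the choice $m \geq \frac{6}{\Delta v}(4 + \log n + \log\frac{1}{\delta})$ makes the union bound over the (at most $n$) jumps smaller than $\delta$. On this high-probability event the hypotheses of the deterministic attraction analysis hold, so the limiting flow drives $\ell$ below $\xi$.

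Finally, I would upgrade from $\varepsilon = 0$ to $\varepsilon \leq Q_2$ using a Grönwall/Tikhonov estimate: writing the true weights as $a(t) = a^\star(u(t)) + b(t)$, the fast equation contracts $b$ at a rate governed by the smallest eigenvalue of the feature Gram matrix, while the slow drift perturbs $a^\star(u(t))$ at rate $O(\varepsilon)$; balancing these shows $\norm{b(t)}$ stays $O(\varepsilon)$ throughout $[0,T]$, so the true position trajectory remains uniformly close to the limiting one and the final loss is still $\leq \xi$. I expect the main obstacle to be twofold, living in the second and fourth paragraphs. First, the reduced loss is genuinely non-convex and the feature Gram matrix can become nearly singular when two neurons approach the same jump; controlling the least-squares solution and the attraction force uniformly over the trajectory, so that no neuron stalls and the estimates survive the perturbation, is the delicate part, and is precisely what forces the strong polynomial-in-$(m+1)$ dependence of $Q_1$ and $Q_2$. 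Second, making the singular-perturbation bound quantitative on the long horizon $T \sim 1/(\varepsilon (\Delta f)^2)$ requires that the contraction of the fast variable never degenerate, which must be maintained in tandem with the covering and attraction invariants rather than established independently.
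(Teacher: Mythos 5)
Your architecture coincides with the paper's: an envelope-theorem reduction to the flow of $u$ under $\ell(u)=\min_a L(a,u)$, a comparison with the Heaviside ($\eta\to 0$) model to get explicit attraction of the jump-flanking neurons, a union bound over the uniform initialization, and a Gr\"onwall/Tikhonov estimate to pass from the two-timescale limit to $\varepsilon\leqslant Q_2$. However, the step you explicitly defer --- keeping the feature Gram matrix uniformly well conditioned along the whole trajectory --- is not a technicality the rest of the plan can absorb; it is the load-bearing idea, and the mechanism that resolves it is missing from your proposal. The paper handles it through a third initialization requirement, beyond coverage and minimal spacing: for each jump $v_i$, the two flanking neurons must be quantitatively \emph{asymmetric} about it, $|u_i^{\rm R}+u_i^{\rm L}-2v_i|\geqslant D$ with $D\sim\delta/(m+1)^2$ (condition \ref{it:ass-distance} of Definition \ref{def:good}, shown to hold with probability $1-\delta/3$ by a density computation in Lemma \ref{lemma:good}). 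The exact formulae of Lemma \ref{lemma:description-gradient} then show that the signed gap $d_i^{\rm R}-d_i^{\rm L}$ is \emph{monotone increasing} along the flow, so only the initially closer neuron ever reaches $v_i$ while the other stays at distance of order $D$; this single invariant is what rules out collisions, keeps $\Delta(u(\tau))\geqslant D/2$, and hence keeps the smallest Hessian eigenvalue above $D/16$ (Proposition \ref{prop:unicity-minimizer}) --- which is exactly the contraction rate your singular-perturbation paragraph presupposes. Your initialization event (``a neuron in the basin of each jump'') is too weak to launch this argument: it admits configurations where the two flanking neurons are nearly symmetric about a jump, in which case both are attracted to $v_i$ at nearly equal rates, $\Delta(u)$ collapses, and your ``assignment is stable along the flow'' claim has no proof. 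It also omits the spacing requirement $\Delta(u(0))\geqslant D$ and the need for several (the paper uses six) neurons per piece, which is what guarantees two neurons per piece survive even after some escape across jumps.

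A second, smaller but genuine gap: your claim that $\|a(t)-a^\star(u(t))\|$ stays $O(\varepsilon)$ throughout $[0,T]$ is false near $t=0$, because the theorem initializes $a(0)=0$ while $\|a^\star(u(0))\|$ is of order $M\sqrt{m+1}$. The Gr\"onwall bound (Proposition \ref{prop:final-control-gf-fast}) only gives $\|a(t)-a^\star(u(t))\|\leqslant 3M\sqrt{m+1}\,e^{-Dt/16}+O(\varepsilon M^3(m+1)^3/D^2)$, so the paper must insert a warm-up phase $[0,T_0]$ with $T_0\sim D^{-1}\log(\cdot)$, prove separately that every neuron moves by at most $D/8$ during it (so that spacing, coverage, and the asymmetry invariant survive into the second phase, where the limit analysis of Proposition \ref{prop:final-control-slow-system} takes over), and check $T_0\leqslant T/12$ so enough time remains for the attraction to complete. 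This two-phase bookkeeping is where the specific exponents in $Q_2$ in \eqref{eq:conditions} come from, and it cannot be replaced by an appeal to ``uniform closeness of trajectories,'' which your sketch never actually establishes.
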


For this result to hold, the inequality \eqref{eq:lower-bound-m} requires the number of neurons in the neural network to be large enough. Note that the minimum number of neurons required to approximate the $n$ pieces of $f^*$ is equal to $n$. If the length of all the intervals is of the same order of magnitude, then $\Delta v = \Theta(1/n)$ and thus the condition is $m = \Omega(n(1+\log n + \log 1/\delta))$. In this case, condition \eqref{eq:lower-bound-m} only adds a logarithmic factor in $n$ and~$\delta$.
Moreover, the lower bound on $m$ does not depend on the target precision $\xi$. Thus we observe some non-linear feature learning phenomenon: with a fixed number $m$ of neurons, gradient flow on a neural network can approximate any element from the infinite-dimensional space of piecewise constant functions to an arbitrary precision. 

    The recovery result of Theorem \ref{thm:main} is provided under two conditions \eqref{eq:conditions}. The first one should not surprise the reader: the condition on $\eta$ enables the non-linearity to be sharp enough in order to approximate well the jumps of the piecewise constant function $f^*$.
    The novelty of our work lies in the condition on $\varepsilon$, that we refer to as the \emph{two-timescale regime}. This condition ensures that the stepsizes taken in the positions $u$ are much smaller than the stepsizes taken in the weights $a$. As a consequence, the weights $a$ are constantly close to the best linear fit given the current positions $u$. This property decouples the dynamics of the two layers of the neural network; this enables a sharp description of the gradient flow trajectories and thus the recovery result shown above. This intuition is detailed in Section \ref{sec:heuristics}.

\section{Related work}
\label{sec:related}

\paragraph{Two-timescale regime.} Systems with two timescales, or \emph{slow-fast systems}, have a long history in physics and mathematics, see \citet[Chapter 2]{berglund2006noise} for an introduction. In particular, iterative algorithms with two timescales have been used in stochastic approximation and optimization, see \citet{borkar1997stochastic} or  \citet[Section 6]{borkar2009stochastic}. For instance, they are used in the training of generative adversarial networks, to decouple the dynamics of the generator from those of the discriminator \citep{heusel2017gans}, in reinforcement learning, to decouple the value function estimation from the temporal difference learning \citep{szepesvari2022algorithms}, or more generally in bilevel optimization, to decouple the outer problem dynamics from the inner problem dynamics \citep{hong2023two}. However, to the best of our knowledge, the two-timescale regime has not been used to show  convergence results for neural networks.

\paragraph{Layer-wise learning rates.} Practitioners are interested in choosing learning rates that depend on the layer index to speed up training or improve performance. Using smaller learning rates for the first layers and higher learning rates for the last layer(s) improves performance for fine-tuning \citep{howard2018universal,ro2021autoLR} and is a common practice for transfer learning \citep[see, e.g.,][]{li2022deep}. 
Another line of work proposes to update layer-wise learning rates depending on the norm of the gradients on each layer \citep{singh2015layer,you2017scaling,ko2022not}. However, they aim to compensate the differences across gradient norms in order to learn all the parameters at the same speed, while on the contrary we enjoy the theoretical benefits of learning different speeds.

\paragraph{Theory of neural networks.} A key novelty of the analysis of this paper is that we show recovery with a fixed number of neurons. We now detail the comparison with other analyses. 
    
The neural tangent kernel regime \citep{jacot2018neural,allen2019convergence,du2019gradient,zou2020gradient} corresponds to small movements of the parameters of the neural network. In this case, the neural network can be linearized around its initial point, and thus behaves like a linear regression. However, in this regime, the neural network can approximate only a finite dimensional space of functions, and thus it is necessary to take $m \to \infty$ to be able to approximate the infinite-dimensional space of piecewise constant functions to an arbitrary precision.  

The mean-field regime \citep{chizat2018global,mei2018mean,rotskoff2018neural,sirignano2020mean} describes the dynamics of two-layer neural networks in the regime $m \gg 1$ through a partial differential equation on the density of neurons. This regime is able to describe some non-linear feature learning phenomena, but does not explained the observed behavior with a moderate number of neurons. In this paper, we show that in the two-timescale regime, only a single neuron aligns with each of the discontinuities of the function $f^*$. However, it should be noted that the neural tangent kernel and mean-field regimes have been applied to show recovery in a wide range of settings, while our work is restricted to the recovery of piecewise constant functions. Extending the application of the two-timescale regime is left for future work. 

Our work includes a detailed analysis of the alignment of the positions of the neurons with the discontinuities of the target function $f^*$. This is analogous to a line of work (see, e.g., \cite{saad1995online,goldt2020dynamics,veiga2022phase}) interested in the alignment of a ``student'' neural network with the features of a ``teacher'' neural network that generated the data, for high-dimensional Gaussian input. In general, the non-linear evolution equations describing this alignment are hard to study theoretically. On the contrary, thanks to the two-timescale regime and to the simple setting of this paper, we are able to give a precise description of the movement of the neurons. 

Our study bears high-level similarities with the recent work of \cite{safran2022effective}. In a univariate classification setting, they show that a two-layer neural network achieves recovery with a number of neurons analogous to \eqref{eq:lower-bound-m}: inversely proportional to the length of the smallest constant interval of the target, up to logarithmic terms in the number of constant intervals and in the failure probability. 
However, the two papers have different settings: \cite{safran2022effective} consider classification with ReLU activations while we consider regression with sigmoid-like activations. More importantly, the authors do not use the two-timescale regime. Instead, by a specific initialization scale, they ensure that the neural network has a first lazy phase where the positions of the neurons do not move significantly. For the second rich phase, they describe the implicit bias of the limiting point; this approach does not lead to an estimate of the convergence time while the fine description of the two-timescale limit does.

Finally, a related technique is the so-called layerwise training, which consists in first training the inner layer with the outer layer fixed, and then doing the reverse. This setup has been used in theoretical works to show convergence of (stochastic) gradient descent in a feature learning regime with a moderate number of neurons \citep{abbe2023sgd,damian2022neural}. The two-timescale regime can be seen as a refinement of this technique since we allow both layers to move simultaneously instead of sequentially, which is closer to practical setups.

\section{A non-rigorous introduction to the two-timescale limit}
\label{sec:heuristics}

This section introduces the core ideas of our analysis in a non-rigorous way. Section \ref{sec:fast-slow-limit} introduces the limit of the dynamics when $\varepsilon \to 0$, called the \emph{two-timescale limit}. Section~\ref{sec:sketch} applies the two-timescale limit to predict the movement of the neurons. 

\subsection{Introduction to the two-timescale limit}
\label{sec:fast-slow-limit}

Let us consider the gradient flow equations \eqref{eq:gf-recovery} and  perform the change of variables $\tau = \varepsilon t$: 
\begin{align} \label{eq:fs}
&\frac{\diff a}{\diff \tau} = - \frac{1}{\varepsilon}\nabla_a L(a,u) \, ,  
&&\frac{\diff u}{\diff \tau} = - \nabla_u L(a,u) \, .
\end{align}
In the two-timescale regime $\varepsilon \ll 1$, the rate of the gradient flow in the weights $a$ is much larger than then the rate in the positions $u$. Note that $L$ is marginally convex in $a$, and thus, for a fixed $u$, the gradient flow in $a$ must converge to a global minimizer of $a \mapsto L(a,u)$. More precisely, assume that $\{\sigma_\eta(. - u_1), \dots, \sigma_\eta(. - u_m)\}$ forms an independent set of functions in $L^2([0,1])$. Then the global minimizer of $a \mapsto L(a,u)$ is unique; we denote it as $a^*(u)$. In the limit $\varepsilon \to 0$, we expect that $a$ evolves sufficiently quickly with respect to $u$ so that it converges instantaneously to $a^*(u)$. In other words, the gradient flow system~\eqref{eq:fs} reduces to its so-called \emph{two-timescale limit} when $\varepsilon \to 0$:
\begin{align}
    &a = a^*(u)  \, ,
    &&\frac{\diff u}{\diff \tau} = - \nabla_u L (a^*(u), u) \, . \label{eq:fs-limit}
\end{align}
The two-timescale limit considerably simplifies the study of the gradient flow system because it substitutes the weights $a$, determined to be equal to $a^*(u)$. However, showing that \eqref{eq:fs} reduces to \eqref{eq:fs-limit} requires some mathematical care, including checking that $a^*(u)$ is well-defined. 

\begin{remark}[abuse of notation]
Equation \eqref{eq:fs-limit} contains an ambiguous notation: does $\nabla_u L (a^*(u), u)$ denote the gradient in $u$ of the map $L^*: u \mapsto L (a^*(u), u)$ or the gradient $(\nabla_u L) (a, u)$ taken at $a = a^*(u)$? In fact, by definition of $a^*(u)$, both quantities coincide: 
\begin{equation*}   
    (\nabla_u L^*)(u) = \Big( \frac{\diff a^*}{\diff u}(u)\Big)^\top (\nabla_a L)(a^*(u), u ) +  (\nabla_u L)(a^*(u), u ) =  (\nabla_u L)(a^*(u), u ) \, ,
\end{equation*}
where $\frac{\diff a^*}{\diff u}$ denotes the differential of $a^*$ in $u$ and we use that, by definition of $a^*(u)$, $(\nabla_a L)(a^*(u), u ) = 0$. This is a special case of the envelope theorem \citep[][Sec. 5.10]{border2015miscellaneous}.
\end{remark}

The discussion in this section is not specific to the setting of Section \ref{sec:main}. 
Using the two-timescale limit to decouple the dynamics of the outer layer $a$ and the inner layer $u$ is a general tool that may be used in the study of any two-layer neural network.
We chose the specific setting of this paper so that the two-timescale limit \eqref{eq:fs-limit} can be easily studied, thereby showcasing the approach. The next section is devoted to a sketch of this study.

\subsection{Sketch of the dynamics of the two-timescale limit}
\label{sec:sketch}

In this section, in order to simplify the exposition of the behavior of the two-timescale limit \eqref{eq:fs-limit}, we consider the limiting case $\eta \to  0$. Note that this is coherent with Theorem \ref{thm:main} that requires $\eta$ to be small. This limit is a neural network with a non-linearity equal to the Heaviside function
\begin{align*}
    &\sigma_0(x) = 0 \quad \text{if $x<0$} \, , &&\sigma_0(x) = 1/2 \quad \text{if $x = 0$}\, ,  &&\sigma_0(x) = 1 \quad\text{if $x > 0$} \, .
\end{align*}
Note that $\sigma_0$ would be a poor choice of non-linearity in practice: as its derivative is $0$ almost everywhere, the positions $u$ would not move. However, it is a relevant tool to get an intuition about the dynamics of our system for a small $\eta$. Moreover, as we will see in Section \ref{sec:experiments}, the dynamics sketched here match closely those of the SGD (with $\eta > 0$).

\begin{figure}
 \centering
 \includegraphics[width=6cm]{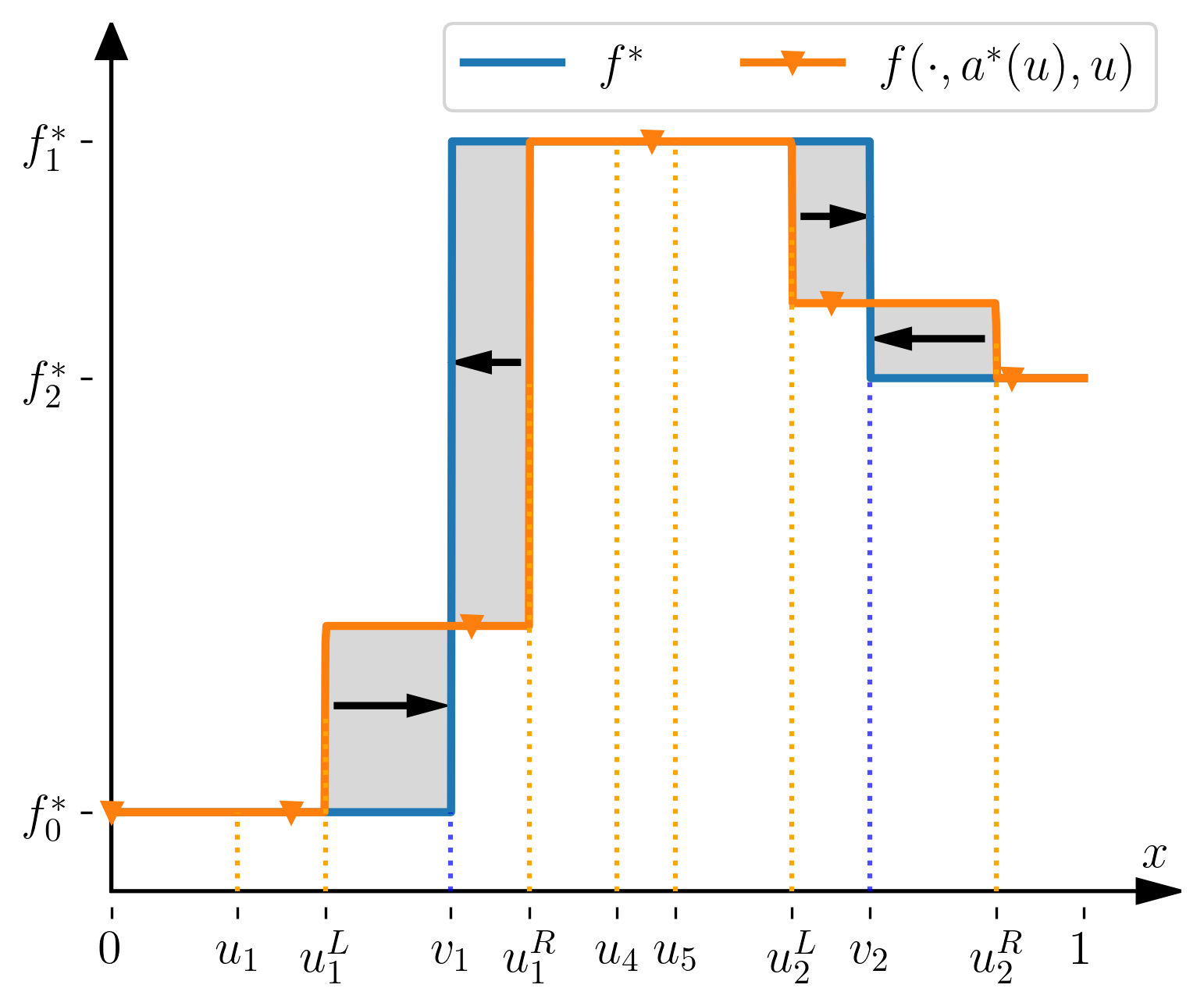}
 \caption{Sketch of the dynamics of the neurons in the two-timescale limit with a Heaviside non-linearity. Only the neurons next to a discontinuity of the target move.}
 \label{fig:sketch}
\end{figure}

The set $\{1, \sigma_0(. - u_1), \dots, \sigma_0(. - u_m)\}$ generates the space of functions that are piecewise constant with respect to the subdivision $\{u_1, \dots, u_m \}$. Furthermore, if $u_1, \dots, u_m$ are distinct and in $(0, 1)$, then this set is an independent set of functions in $L^2([0,1])$. Thus $a^*(u)$ is well defined and represents the coefficients of the best piecewise constant approximation of $f^*$ with subdivision $\{u_1, \dots, u_m \}$. 

This quantity is straightforward to describe under the mild additional assumption that there are at least two neurons $u_j$ in each interval $(v_{i-1}, v_i)$ between two points of discontinuity of $f^*$. For each $1 \leqslant i \leqslant n$, let $u_i^{\rm{L}}$ denote the largest position of neurons below $v_i$ and $u_i^{\rm{R}}$ denote the smallest position above~$v_i$ (with convention $u_0^{\rm{R}} = 0$ and $u_{n+1}^{\rm{L}} = 1$). By assumption, $0 = u_0^{\rm{R}} < u_1^{\rm{L}} < u_1^{\rm{R}} < \dots < u_n^{\rm{L}} < u_n^{\rm{R}} < u_{n+1}^{\rm{L}} = 1$ are distinct. A simple computation then shows the following identities:
\begin{itemize}
    \item for all $i \in \{1 , \dots, n \}$, for all $x \in (u_i^{\rm{L}} , u_i^{\rm{R}})$, 
    $\displaystyle
        f(x;a^*(u), u) = \frac{v_i - u_i^{\rm{L}} }{u_i^{\rm{R}} - u_i^{\rm{L}}} f_{i-1}^* + \frac{u_i^{\rm{R}} - v_i }{u_i^{\rm{R}} - u_i^{\rm{L}}} f_{i}^* \, ,
    $
    \item and for all $i \in \{1 , \dots, n+1 \}$, for all $x \in (u_{i-1}^{\rm{R}} , u_i^{\rm{L}})$,
    $
        f(x;a^*(u), u) = f_{i-1}^* \, ,
    $
\end{itemize}
where we recall that $f^*_i$ denotes the value of $f^*$ on the interval $(v_i,v_{i+1})$.
Figure \ref{fig:sketch} illustrates the situation.
Moreover, the loss $L(a^*(u),u)$, which is half of the square $L^2$-error of this optimal approximation, can be written 
\begin{equation}
\label{eq:fast-slow-loss}
    L(a^*(u),u) = \frac{1}{2} \sum_{i=1}^{n-1} \frac{(v_i - u_i^{\rm{L}})(u_i^{\rm{R}} - v_i )}{u_i^{\rm{R}} - u_i^{\rm{L}}} (f^*_{i} - f_{i-1}^*)^2 \, .
\end{equation}
The dynamics of the two-timescale limit \eqref{eq:fs-limit} corresponds to the local optimization of the subdivision~$u$ in order to minimize the loss \eqref{eq:fast-slow-loss}. A remarkable property of this loss is that it decomposes as a sum of local losses around the jump points $v_i$ for $i \in \{1, \dots, n-1\}$. Each element of the sum involves only the two neurons located at $u_i^{\rm{L}}$ and $u_i^{\rm{R}}$. As a consequence, the dynamics of the two-timescale limit~\eqref{eq:fs-limit} decompose as $n$ independent systems of two neurons $u_i^{\rm{L}}$ and~$u_i^{\rm{R}}$: for all $i \in \{1, \dots, n-1\}$,
\begin{equation}    \label{eq:sketch-derivatives-u-i-lr}
\begin{alignedat}{2}
    \frac{\diff u_i^{\rm{L}}}{\diff \tau} &= - \frac{\diff L}{\diff u_i^{\rm{L}}} (a^*(u), u) = + \frac{1}{2}\frac{(u_i^{\rm{R}} - v_i)^2}{(u_i^{\rm{R}} - u_i^{\rm{L}})^2} (f^*_{i} - f_{i-1}^*)^2 \, , \\
    \frac{\diff u_i^{\rm{R}}}{\diff \tau} &= - \frac{\diff L}{\diff u_i^{\rm{R}}} (a^*(u), u) = - \frac{1}{2}\frac{(v_i - u_i^{\rm{L}} )^2}{(u_i^{\rm{R}} - u_i^{\rm{L}})^2} (f^*_{i} - f_{i-1}^*)^2 \, .
\end{alignedat}
\end{equation}
All neurons other than $u_1^{\rm{L}}, u_1^{\rm{R}}, \dots, u_{n-1}^{\rm{L}}, u_{n-1}^{\rm{R}}$ do not play a role in the expression \eqref{eq:fast-slow-loss}, thus they do not move in the two-timescale limit \eqref{eq:fs-limit}. The position $u_i^{\rm{L}}$ moves right and $u_i^{\rm{R}}$ moves left, until one of them hits the point $v_i$. This shows that the positions of the neurons eventually align with the jumps of the function $f^*$, and thus that the function $f^*$ is recovered.

\section{Convergence of the gradient flow}
\label{sec:proof}

In this section, we give precise mathematical statements leading to the convergence of the gradient flow to a global optimum, first in the two-timescale limit $\varepsilon \to 0$, then in the two-timescale regime with $\varepsilon$ small but positive. All proofs are relegated to the Appendix.

\subsection{In the two-timescale limit}

This section analyses rigorously the two-timescale limit \eqref{eq:fs-limit}, which we recall for convenience:
\begin{align}
\label{eq:two-timescale-limit}
    &a^*(u) = \argmin_a L(a,u) \, , 
    &&\frac{\diff u}{\diff \tau} = - \nabla_u L (a^*(u), u) \, . 
\end{align}
We start by giving a rigorous meaning to these equations. First, for $L$ to be differentiable in $u$, we require the parameter $\eta$ of the non-linearity to be positive. Second, for $a^*(u)$ to be well-defined, we need $u \mapsto L(a,u)$ to have a unique minimum. Obviously, if the $u_i$ are not distinct, then the features $\{\sigma_\eta(.-u_1), \dots, \sigma_\eta(.-u_m) \}$ are not independent and thus the minimum can not be unique. We restrict the state space of our dynamics to circumvent this issue. For $u \in [0,1]^m$, we denote 
\begin{equation*}
    \Delta(u) = \min_{0 \leqslant j,k \leqslant m+1, \, j \neq k} |u_j - u_k| \, ,
\end{equation*}
with the convention that $u_0 = -\eta/2$ and $u_{m+1} = 1+\eta/2$. Further, we define 
$
    \mathcal{U} = \left\{u \in [0,1]^m \, \middle\vert \,  \Delta(u) > 2 \eta \right\}. 
$
The proposition below shows that $\mathcal{U}$ gives a good candidate for a set supporting solutions of~\eqref{eq:two-timescale-limit}. 

\begin{proposition} \label{prop:unicity-minimizer}
For $u \in \mathcal{U}$, the Hessian $H(u)$ of the quadratic function $L(.,u)$ is positive definite and its smallest eigenvalue is greater than $\nicefrac{\Delta(u)}{8}$. In particular, $L(.,u)$ has a unique minimum $a^*(u)$.
\end{proposition}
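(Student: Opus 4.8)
The plan is to exploit that $f(x;a,u)$ is linear in $a$, so that $L(\cdot,u)$ is a quadratic form in $a$ whose Hessian $H(u)$ is exactly the Gram matrix of the feature functions $\phi_0 = 1$ and $\phi_j = \sigma_\eta(\cdot - u_j)$, $j = 1, \dots, m$, in $L^2([0,1])$; that is, $H(u)_{jk} = \int_0^1 \phi_j \phi_k \, \diff x$. Consequently, for $b = (b_0, \dots, b_m) \in \R^{m+1}$ one has $b^\top H(u) b = \norm{g_b}_{L^2([0,1])}^2$ with $g_b = \sum_{j=0}^m b_j \phi_j$, and the claim reduces to showing $\norm{g_b}_{L^2}^2 > \tfrac{\Delta(u)}{8}$ whenever $\norm{b} = 1$. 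Establishing this uniform lower bound is the crux of the argument; positive definiteness and the uniqueness of the minimizer $a^*(u)$ then follow immediately, since a strictly convex quadratic has a unique minimum.

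To prove the bound, I would first reorder the neurons so that $0 < u_{(1)} < \dots < u_{(m)} < 1$, using that $u \in \mathcal{U}$ forces the $u_j$ to be distinct and, together with the conventions $u_0 = -\eta/2$ and $u_{m+1} = 1 + \eta/2$, to be at pairwise distance greater than $2\eta$. Since $\sigma_\eta(\cdot - u_{(i)})$ is constant outside the transition window $(u_{(i)} - \eta/2,\, u_{(i)} + \eta/2)$, the separation $\Delta(u) > 2\eta$ guarantees that these windows are pairwise disjoint and stay away from the endpoints $0$ and $1$. Hence $[0,1]$ contains $m+1$ ``flat'' intervals $I_0, \dots, I_m$, where $I_i$ sits between consecutive windows, on which $g_b$ is constant and equal to the partial sum $s_i = b_0 + \sum_{k \leqslant i} b_{(k)}$ (with $b_{(k)}$ the coefficient attached to $u_{(k)}$). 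A direct computation of the lengths gives $|I_i| \geqslant \Delta(u) - \eta > \tfrac{\Delta(u)}{2}$ for every $i$, where the strict inequality uses $\Delta(u) > 2\eta$.

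Restricting the integral to these flat intervals then yields $\norm{g_b}_{L^2}^2 \geqslant \sum_{i=0}^m |I_i|\, s_i^2 > \tfrac{\Delta(u)}{2}\norm{s}^2$, which reduces the problem to lower-bounding $\norm{s}^2$ in terms of $\norm{b}^2 = 1$. I would conclude by noting that, after reordering, $b \mapsto s$ is the lower-triangular all-ones matrix $T$, whose inverse is the discrete difference operator $D$ given by $(Ds)_0 = s_0$ and $(Ds)_i = s_i - s_{i-1}$. The elementary estimate $\norm{Ds}^2 = \sum_i (s_i - s_{i-1})^2 \leqslant 4\norm{s}^2$ gives $\norm{D} \leqslant 2$, hence $\norm{s}^2 \geqslant \norm{b}^2/4 = 1/4$ (reordering preserves the norm of $b$). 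Combining the two displayed inequalities gives $b^\top H(u) b > \tfrac{\Delta(u)}{2}\cdot\tfrac14 = \tfrac{\Delta(u)}{8}$ for every unit vector $b$, as desired.

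The conceptual heart of the proof is the flat-interval decomposition: it turns the $L^2$-norm of the combination $g_b$ into a weighted $\ell^2$-norm of the partial sums $s_i$, thereby converting an analytic statement about near-linearly-dependent sigmoids into elementary linear algebra on a difference operator. The main technical point to watch is that recovering the exact constant $1/8$ requires both ingredients to be simultaneously sharp enough, namely the length bound $|I_i| > \Delta(u)/2$ (which is precisely where $\Delta(u) > 2\eta$ enters) and the bound $\norm{D} \leqslant 2$ controlling the change of variables $b \mapsto s$; I expect verifying the flatness of $g_b$ on each $I_i$ and the interval-length estimates, including the boundary intervals $I_0$ and $I_m$, to be the most delicate bookkeeping.
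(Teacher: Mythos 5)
Your proposal is correct and follows essentially the same route as the paper's proof: both express $a^\top H(u)a$ as the $L^2$-norm of the corresponding feature combination, restrict the integral to the flat intervals between the transition windows (whose lengths exceed $\Delta(u)-\eta \geq \Delta(u)/2$ thanks to $\Delta(u) > 2\eta$), and then lower-bound the $\ell^2$-norm of the partial sums by $\|a\|^2/4$ via the elementary estimate $\sum_i (s_i - s_{i-1})^2 \leq 4\sum_i s_i^2$. Your packaging of the last step as an operator-norm bound on the discrete difference matrix is only a cosmetic variant of the paper's direct computation with the partial sums $c_i$.
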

The bound on the Hessian is useful in the following, in particular in the proof of the following result.
\begin{proposition} \label{prop:good-definition-fast-slow}
Let $G(u) = \nabla_u L (a^*(u), u)$ for $u\in\mathcal{U}$. Then $G:\mathcal{U} \to \R^{m}$ is Lipschitz-continuous. 
\end{proposition}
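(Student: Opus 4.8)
The plan is to obtain an explicit expression for $G$, reduce the problem to the Lipschitz-continuity of the map $u \mapsto a^*(u)$, and then conclude by combining products and compositions of bounded Lipschitz functions. Throughout I use that $\sigma$ is twice continuously differentiable and constant outside $[-1/2,1/2]$, so that $\sigma'$ and $\sigma''$ are bounded with compact support; in particular $\norm{\sigma_\eta'}_{L^1}\leqslant 1$ while $\norm{\sigma_\eta'}_{L^\infty}$ and $\norm{\sigma_\eta''}_{L^\infty}$ are finite (with $\eta$-dependent bounds). By the envelope identity recalled in the remark following \eqref{eq:fs-limit}, one has $G_j(u) = (\nabla_u L)_j(a^*(u),u) = a_j^*(u)\,\langle f^* - f(\cdot;a^*(u),u),\ \sigma_\eta'(\cdot-u_j)\rangle_{L^2}$, so it suffices to control $a^*(u)$ and the residual $f^*-f(\cdot;a^*(u),u)$ as functions of $u$.

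First I would handle $a^*(u)$. Writing $L(a,u) = \tfrac12 a^\top H(u) a - b(u)^\top a + \tfrac12\norm{f^*}_{L^2}^2$ with $b(u)_j = \langle f^*, \sigma_\eta(\cdot-u_j)\rangle$ (and the obvious entries for the constant feature), the minimizer is $a^*(u) = H(u)^{-1} b(u)$. Proposition \ref{prop:unicity-minimizer} gives $\lambda_{\min}(H(u)) > \Delta(u)/8 > \eta/4$ uniformly on $\mathcal{U}$, hence $\norm{H(u)^{-1}}_{\mathrm{op}} < 4/\eta$; together with $\norm{b(u)}\leqslant \sqrt{m+1}\,M$ this bounds $\norm{a^*(u)}$ uniformly on $\mathcal{U}$. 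Differentiating the entries of $H$ and $b$ in $u$ produces integrals against $\sigma_\eta'(\cdot-u_j)$, bounded using $\norm{\sigma_\eta'}_{L^1}\leqslant1$ and $\norm{f^*}_{\infty}\leqslant M$; thus $H$ and $b$ are Lipschitz in $u$. The inverse is then Lipschitz via $\norm{H(u)^{-1}-H(u')^{-1}} \leqslant \norm{H(u)^{-1}}\,\norm{H(u')^{-1}}\,\norm{H(u)-H(u')}$ and the uniform bound $4/\eta$, so $a^*(u) = H(u)^{-1}b(u)$, a product of bounded Lipschitz factors, is bounded and Lipschitz on $\mathcal{U}$.

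It remains to show the residual term is Lipschitz. The map $u\mapsto f(\cdot;a^*(u),u)$ is Lipschitz as an $L^2([0,1])$-valued map: $a^*$ is Lipschitz and bounded (previous step), while each feature $u_j\mapsto \sigma_\eta(\cdot-u_j)$ is Lipschitz into $L^2$ with constant $\norm{\sigma_\eta'}_{L^2}$, and $u_j \mapsto \sigma_\eta'(\cdot-u_j)$ is Lipschitz into $L^2$ with constant $\norm{\sigma_\eta''}_{L^2}$. Since the $L^2$ inner product is bilinear with bounded factors here, each $r_j(u):=\langle f^*-f(\cdot;a^*(u),u),\sigma_\eta'(\cdot-u_j)\rangle$ is bounded and Lipschitz. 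Finally $G_j = a_j^*\,r_j$ is a product of bounded Lipschitz functions, hence Lipschitz, and summing the coordinate bounds shows $G$ is Lipschitz on $\mathcal{U}$.

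The main obstacle is the Lipschitz-continuity of $a^*(u) = H(u)^{-1}b(u)$, and specifically of $u\mapsto H(u)^{-1}$: this requires a lower bound on $\lambda_{\min}(H(u))$ that is uniform over all of $\mathcal{U}$, not merely positive pointwise. This uniform bound is exactly what Proposition \ref{prop:unicity-minimizer} supplies ($\lambda_{\min}>\Delta(u)/8 > \eta/4$ on $\mathcal{U}$), which is why that proposition is singled out as the key ingredient; once it is in hand, everything else is bookkeeping of products and compositions of bounded Lipschitz maps, with constants that blow up as $\eta\to0$ but remain finite for each fixed $\eta>0$. I would also note that the estimates are uniform over $\mathcal{U}$, yielding a single Lipschitz constant valid on each connected component, which is all that is needed for the subsequent Cauchy–Lipschitz argument.
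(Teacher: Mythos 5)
Your proof is correct, and it reaches the conclusion by a somewhat different route than the paper. The paper's proof establishes that $G$ is \emph{differentiable} on $\mathcal{U}$ with uniformly bounded derivatives: it applies the chain rule to $\partial G_j/\partial u_k$, bounds the second derivatives of $L$ (using that $\sigma$ is $C^2$ and that $\|a^*(u)\|$ is bounded via Proposition \ref{prop:unicity-minimizer} and the bound $\|b(u)\|\leqslant M\sqrt{m+1}$), and invokes a separate lemma (Lemma \ref{lemma:bound-change-speed-a-star}) asserting that $u\mapsto a^*(u)$ is differentiable with a derivative bound of the form $\frac{8}{\Delta(u)}\bigl(2(m+1)\|a^*(u)\|+M\bigr)$. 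You instead avoid differentiating $a^*$ or $G$ altogether: you use the closed form $a^*(u)=H(u)^{-1}b(u)$, prove $H$ and $b$ are Lipschitz by elementary integral estimates, pass Lipschitzness through the inverse via the resolvent identity $\|H(u)^{-1}-H(u')^{-1}\|\leqslant\|H(u)^{-1}\|\,\|H(u')^{-1}\|\,\|H(u)-H(u')\|$, and then close the argument by stability of bounded Lipschitz functions under products and compositions (your difference estimate for $H^{-1}$ is essentially the finite-difference counterpart of the paper's Lemma \ref{lemma:bound-change-speed-a-star}). Both proofs hinge on the same key ingredient, the uniform lower bound $\lambda_{\min}(H(u))>\Delta(u)/8>\eta/4$ from Proposition \ref{prop:unicity-minimizer}, which you correctly identify as the crux. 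Your difference-based argument buys one small dividend that the paper's derivative-based argument glosses over: bounded derivatives yield Lipschitzness only on convex (or suitably connected) pieces of $\mathcal{U}$, whereas your estimates hold for arbitrary pairs $u,u'\in\mathcal{U}$, so the non-convexity of $\mathcal{U}$ (a disjoint union of order chambers) causes no difficulty; you flag this issue explicitly, and in fact your bound is even global on $\mathcal{U}$ rather than merely per-component. One cosmetic remark: invoking the envelope theorem to get the formula for $G_j$ is unnecessary, since $G$ is defined as the partial gradient $(\nabla_u L)(a^*(u),u)$ and the expression follows directly from \eqref{eq:def-gradient-u}; this does not affect correctness.
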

Then, the Picard-Lindelöf theorem (see, e.g., \citealp{luk2017notes} for a self-contained presentation and \citealp{cooke1992ordinary} for a textbook) guarantees, for any initialization $u(0) \in \cU$, the existence and unicity of a maximal solution of \eqref{eq:two-timescale-limit} taking values in $\mathcal{U}$. This solution is defined on a maximal interval $[0, T_{\max})$ where it could be that $T_{\max} < \infty$ if $u$ hits the boundary of $\mathcal{U}$. However, the results below show that the target function $f^*$ is recovered before this happens (with high probability over the initialization), and thus that this notion of solution is sufficient for our purposes. To this aim, we first define some sufficient conditions that the initialization should satify.

\begin{definition}
	\label{def:good}
    Let $D$ be a positive real. We say that a vector of positions $u \in [0,1]^m$ is $D$-good if 
     \begin{enumerate}[label=(\alph*)]
        \item\label{it:ass-4} for all $i \in \{0, \dots, n-1\}$, there are at least $6$ positions $u_j$ in each interval $[v_i, v_{i+1}]$, 
        \item\label{it:ass-Delta} $\Delta(u) \geqslant D$, and 
        \item\label{it:ass-distance} for all $i \in \{1, \dots, n-1\}$, denoting $u_i^{\rm{L}}$ the position closest to the left of $v_i$ and $u_i^{\rm{R}}$ the position closest to the right, we have $\vert u_i^{\rm{R}} + u_i^{\rm{L}} - 2 v_i \vert \geqslant D$.
    \end{enumerate}
\end{definition}

Condition \ref{it:ass-4} is related to the fact that the derivation in Section \ref{sec:sketch} is valid only if there are at least two neurons per piece. This requirement that the neurons be distributed on every piece of the target seems to be necessary for our result to hold, and we provide in Appendix \ref{subsec:counter-example} a counter-example where recovery fails otherwise.
Condition \ref{it:ass-Delta} indicates that the neurons have to be sufficiently spaced at initialization, which is not surprising since we have to guarantee that $\Delta(u(\tau)) >{2\eta}$, that is, $u(\tau) \in \cU$, for all $\tau$ until the recovery of $f^*$ happens. Finally, condition \ref{it:ass-distance} also helps to control the distance between neurons: although $u_i^{\rm{L}}$ and $u_i^{\rm{R}}$ move towards each other, as shown by \eqref{eq:sketch-derivatives-u-i-lr}, their distance can be controlled throughout the dynamics as a function of $\vert u_i^{\rm{R}} + u_i^{\rm{L}} - 2 v_i \vert$.

We can now state the Proposition showing the recovery in finite time. The proof resembles the sketch of Section \ref{sec:sketch} with additional technical details since we need to control the distance between neurons, and the fact that $\eta > 0$ makes the dynamics more delicate to describe.
\begin{proposition}\label{prop:final-control-slow-system}
    Let $f^* \in \mathcal{F}_{n, \Delta v, \Delta f, M}$. Assume that the initialization $u(0)$ is $D$-good with $D = 2^{13/2} (m+1)^{1/2}M\eta^{1/2} (\Delta f)^{-1}$.
    Then the maximal solution of \eqref{eq:two-timescale-limit} taking values in $\cU$ is defined at least on $[0, \mathcal{T}]$ for $\mathcal{T}=\nicefrac{6}{(\Delta f)^2}$, and at the end of this time interval, there is a neuron at distance less than $\eta$ from each discontinuity of $f^*$.
\end{proposition}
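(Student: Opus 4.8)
The plan is to reduce the $\eta > 0$ dynamics to the explicit Heaviside picture of Section~\ref{sec:sketch}, while rigorously controlling the corrections due to the finite sharpness of $\sigma_\eta$. First I would invoke Propositions~\ref{prop:unicity-minimizer} and~\ref{prop:good-definition-fast-slow} to obtain, for the $D$-good initialization (which lies in $\mathcal{U}$ since $D > 2\eta$), a unique maximal solution $u(\cdot)$ of~\eqref{eq:two-timescale-limit} valued in $\mathcal{U}$ on some interval $[0, T_{\max})$. The entire task is then to show that recovery occurs before $u$ can reach $\partial\mathcal{U}$, i.e. that $T_{\max} > \mathcal{T}$ and that at time $\mathcal{T}$ some neuron lies within $\eta$ of each $v_i$. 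The key structural fact I would establish is that, as long as $\Delta(u) > 2\eta$, the transition zones $[u_j - \eta/2, u_j + \eta/2]$ are pairwise disjoint, so that $a^*(u)$ and the gradient $G(u)$ are, up to an $O(\eta)$ error bounded using the Hessian lower bound of Proposition~\ref{prop:unicity-minimizer}, those of the Heaviside model~\eqref{eq:sketch-derivatives-u-i-lr}. In particular the loss essentially decouples across jumps, each $v_i$ being governed by its two neighbouring neurons $u_i^{\rm{L}}, u_i^{\rm{R}}$, while all other neurons have a gradient of size $O(\eta)$ and stay essentially frozen.

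For the local analysis near a fixed jump $v_i$, I would track the two scalars $s_i = u_i^{\rm{R}} + u_i^{\rm{L}} - 2 v_i$ and $d_i = u_i^{\rm{R}} - u_i^{\rm{L}}$. In the Heaviside model a direct computation from~\eqref{eq:sketch-derivatives-u-i-lr} gives $\frac{\diff d_i}{\diff \tau} < 0$ and $\frac{\diff s_i}{\diff \tau} = \frac{1}{2}\frac{s_i}{d_i}(f^*_i - f^*_{i-1})^2$, so that $|s_i|$ is non-decreasing while $d_i$ decreases; one checks similarly that $s_i/d_i$ increases in absolute value. Consequently the sign of $s_i(0)$, which condition~\ref{it:ass-distance} forces to satisfy $|s_i(0)| \geq D$, designates once and for all which of the two neurons \emph{wins} and converges to $v_i$, the other retreating. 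I would carry the $\eta > 0$ corrections through these two ODEs to show that $|s_i|$ stays above $D/2$ and $d_i$ stays above $2\eta$ throughout: this is exactly what keeps $u(\tau) \in \mathcal{U}$ and prevents a collision. The magnitude $D \propto M \eta^{1/2}(\Delta f)^{-1}$ is chosen so that the driving term $\propto (\Delta f)^2$ dominates the $O(\eta)$ smoothing error precisely when the winning neuron enters the $\eta$-neighbourhood of $v_i$.

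To obtain the time bound I would integrate the ODE for the winning neuron, say $u_i^{\rm{L}}$ when $s_i > 0$. Writing its speed as $\frac{1}{2}\big(\tfrac{1}{2}(1 + s_i/d_i)\big)^2 (f^*_i - f^*_{i-1})^2$ and using the monotonicity $s_i/d_i \geq s_i(0)/d_i(0) \geq D$ together with $(f^*_i - f^*_{i-1})^2 \geq (\Delta f)^2$, I would lower bound this speed by a constant multiple of $(\Delta f)^2$ uniformly in $\tau$; since the distance to be covered, $v_i - u_i^{\rm{L}}(0) = (d_i(0) - s_i(0))/2 \leq 1/2$, is bounded, the winning neuron reaches distance $\eta$ from $v_i$ in time at most $\mathcal{T} = 6/(\Delta f)^2$, the constant absorbing the $\eta$-correction slack. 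Running this argument simultaneously for all $i \in \{1, \dots, n-1\}$, and checking that the frozen and retreating neurons never violate $\Delta(u) > 2\eta$ (using condition~\ref{it:ass-4} to guarantee a spare neuron on each side), yields the claim.

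The main obstacle is the rigorous control of the finite-$\eta$ corrections in the regime where the clean Heaviside formulas degenerate, namely when the winning neuron is within $O(\eta)$ of $v_i$ and $\sigma_\eta(\cdot - u_i^{\rm{L}})$ straddles the discontinuity. There the decoupled formula for $a^*(u)$ is no longer exact and the gradient picks up terms that are not negligible relative to the now-small distance to $v_i$; quantifying these and showing they can neither reverse the approach nor shrink $d_i$ below $2\eta$ — which is exactly where the $\eta^{1/2}$ scaling of $D$ is needed — is the delicate part, and I expect it to rely throughout on the separation $\Delta(u) > 2\eta$ and the Hessian estimate of Proposition~\ref{prop:unicity-minimizer}.
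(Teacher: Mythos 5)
Your overall route is the paper's own: reduce to the Heaviside picture by bounding $\|a_\eta^*(u) - a_0^*(u)\|$ through the Hessian bound of Proposition \ref{prop:unicity-minimizer} (this is Lemmas \ref{lemma:control-solution-fast-system} and \ref{lemma:upper-bound-gradient} in the paper), work in sum/difference coordinates (your $s_i, d_i$ are exactly the paper's $d_i^{\mathrm{R}} \pm d_i^{\mathrm{L}}$), let condition \ref{it:ass-distance} single out the winning neuron via monotonicity of $s_i$, lower bound the winner's speed by a multiple of $(\Delta f)^2$ to get the time $\mathcal{T} = 6/(\Delta f)^2$, and keep the trajectory in $\cU$ by collision control. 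One quantitative point needs correcting: the gradient perturbation is not $O(\eta)$ but $O(\eta/\Delta(u))$; since separation can only be maintained at scale $D \propto \eta^{1/2}$, the usable uniform bound is $\Delta g \propto D (\Delta f)^2$, an $O(\eta^{1/2})$ quantity, and the argument is a bootstrap: $\Delta g$ must be small enough that the accumulated drift $\Delta g \cdot \mathcal{T} = D/10$ does not destroy the separation $\geq D/2$ that justified the bound $\Delta g$ in the first place. Your proposal implicitly relies on this self-consistency but states the error at the wrong order.

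The genuine gap is the step you yourself flag as the ``main obstacle'': the regime where the winning neuron is within $\eta$ of $v_i$. Your plan---quantify the gradient in the straddling regime and show it can neither reverse the approach nor shrink the separation---is not what the paper does, and executing it would be painful: once the transition zone of $\sigma_\eta(\cdot - u_i^{\mathrm{L}})$ overlaps the jump, the exact formulas of Lemma \ref{lemma:description-gradient} fail and $\partial L_\eta / \partial u_i^{\mathrm{L}}$ depends on the fine structure of $\sigma$, with no clean sign. The paper sidesteps this entirely with a barrier argument: Lemma \ref{lemma:description-gradient} only requires both flanking neurons to be at distance at least $\eta/2$ from $v_i$, so it remains valid at the two edge configurations $u_i^{\mathrm{L}} = v_i - \eta$ and $u_i^{\mathrm{L}} = v_i + \eta$ (in the latter case $u_i^{\mathrm{LL}}$ has become the left flank), and at both edges the drift, even after subtracting $\Delta g$, points back into $[v_i - \eta, v_i + \eta]$; the gradient inside the straddling zone is never evaluated. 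Relatedly, the dangerous collision in this phase is not $u_i^{\mathrm{L}}$ versus $u_i^{\mathrm{R}}$ (your $s_i$ monotonicity handles that pair) but $u_i^{\mathrm{L}}$ versus $u_i^{\mathrm{LL}}$: once $u_i^{\mathrm{L}}$ crosses to the right of $v_i$, the neuron $u_i^{\mathrm{LL}}$ becomes the attracted left flank, moving at speed at most $16 \eta^2 M^2 / D^2 + \Delta g$, and the $\eta^{1/2}$ scaling of $D$ is precisely what makes the integral of this over $[0, \mathcal{T}]$ a small fraction of $D$. So your instinct about where the scaling of $D$ enters is correct, but the concrete mechanism that closes the argument---the barrier at $v_i \pm \eta$ together with the slow drift of the second-nearest neuron---is the missing idea in your proposal.
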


This Proposition is the main building block to show recovery in the next Theorem, along with some high-probability bounds to ensure that an i.i.d.~uniform initialization is $D$-good.
\begin{theorem} \label{thm:final-fs-cv}
Let $\xi, \delta > 0$, and $f^*$ a piecewise constant function from $\mathcal{F}_{n, \Delta v, \Delta f, M}$.
Assume that the neural network has $m$ neurons with 
\[
m \geqslant \frac{6}{\Delta v} \big(4 + \log n + \log \frac{1}{\delta} \big) \, .
\]
Assume that, at initialization, the positions $u_1, \dots, u_m$ of the neurons are i.i.d.~uniformly distributed on $[0,1]$. 
Then there exists $Q$ depending on $\xi, \delta, m, \Delta f, M$ such that, if 
\begin{align*}
    &\eta \leqslant Q \, , 
\end{align*}
then, with probability at least $1 - \delta$, the maximal solution to the two-timescale limit~\eqref{eq:two-timescale-limit} is defined at least until $\mathcal{T} = \frac{6}{(\Delta f)^2}$, and
\begin{align*}
    &\int_0^1 |f^*(x) - f(x;a^*(u(\mathcal{T})), u(\mathcal{T}))|^2 \diff x \leqslant \xi \, .
\end{align*}
Furthermore, we have
$
\displaystyle Q = \frac{C}{M^2} \min \Big(\frac{\delta^2 (\Delta f)^2}{(m+1)^{5}},  \frac{\xi}{n} \Big)
$
for some universal constant $C > 0$.
\end{theorem}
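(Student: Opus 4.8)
The plan is to combine the deterministic recovery guarantee of Proposition~\ref{prop:final-control-slow-system} with two auxiliary arguments: a probabilistic one showing that the i.i.d.\ uniform initialization is $D$-good with probability at least $1-\delta$ for $D = 2^{13/2}(m+1)^{1/2} M \eta^{1/2}(\Delta f)^{-1}$, and an approximation argument converting the conclusion ``a neuron within $\eta$ of each discontinuity'' into the stated $L^2$ bound. On the $D$-good event, Proposition~\ref{prop:final-control-slow-system} guarantees that the maximal solution is defined on $[0,\mathcal T]$ with $\mathcal T = 6/(\Delta f)^2$ and that each $v_i$ has a neuron within distance $\eta$ at time $\mathcal T$; the approximation argument then yields the error estimate. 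The two constraints on $\eta$ produced along the way are exactly the two terms in the minimum defining $Q$.

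For the probabilistic step I would bound separately the failure probability of each of the three conditions of Definition~\ref{def:good}, allocating a budget $\delta/3$ to each. Condition~\ref{it:ass-4} (at least six neurons per piece) is what forces the lower bound on $m$: the number of neurons in a piece of length at least $\Delta v$ dominates a $\mathrm{Binomial}(m,\Delta v)$ variable of mean $\lambda \geq 6(4+\log n + \log\tfrac1\delta)$ by~\eqref{eq:lower-bound-m}, so the Chernoff-type bound $\Prob(X\leq 5)\leq e^{-\lambda}(e\lambda/5)^5$ combined with a union bound over the $n$ pieces keeps the failure probability below $\delta/3$ — the additive constant $4$ and the factor $6$ in~\eqref{eq:lower-bound-m} being tuned precisely for this. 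Condition~\ref{it:ass-Delta} ($\Delta(u)\geq D$) follows from a union bound over the $\binom{m}{2}$ pairs and the two boundary points, using $\Prob(|U_j - U_k| < D)\leq 2D$; this gives failure probability $O(m^2 D)$ and hence the binding requirement $D\lesssim \delta/(m+1)^2$. Condition~\ref{it:ass-distance} ($|u_i^{\rm{R}}+u_i^{\rm{L}}-2v_i|\geq D$) is the most delicate: writing the distances of the two nearest neighbors of $v_i$ as $S,T$, the pair $(S,T)$ has density proportional to $m(m-1)(1-s-t)^{m-2}$, and integrating over $|t-s|<D$ gives $\Prob(|T-S|<D) = O(mD)$; a union bound over the $n-1$ discontinuities yields $O(nmD)$, a less stringent requirement than that of condition~\ref{it:ass-Delta} since $m \gtrsim n\log n$.

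Substituting the value of $D$ into the binding constraint $D\lesssim \delta/(m+1)^2$ and solving for $\eta$ produces $\eta \lesssim \delta^2(\Delta f)^2/(M^2(m+1)^5)$, the first term of $Q$. For the approximation step, since $a^*(u(\mathcal T))$ minimizes $a\mapsto L(a,u(\mathcal T))$, it suffices to exhibit one weight vector $\tilde a$ with $L(\tilde a, u(\mathcal T))\leq \xi/2$ and invoke $L(a^*(u),u)\leq L(\tilde a, u)$, recalling that the target integral equals $2L$. A natural choice sets $a_0 = f^*_0$, assigns to the neuron nearest each $v_i$ the weight $f^*_i - f^*_{i-1}$, and zeroes the rest; the resulting $f(\cdot;\tilde a, u)$ telescopes to $f^*$ away from the jumps and differs from $f^*$ only on transition windows of width $O(\eta)$ around each $v_i$ — of this width both because the jump-carrying neuron lies within $\eta$ of $v_i$ and because $\sigma_\eta$ transitions over a window of width $\eta$ — on which the integrand is $O(M^2)$. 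Summing the $n-1$ contributions gives squared error $O(n\eta M^2)$, hence $\eta\lesssim \xi/(nM^2)$, matching the $\xi/n$ term of $Q$.

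The main obstacle is the probabilistic control of condition~\ref{it:ass-distance}, which requires the nearest-neighbor spacing computation above together with careful attention to boundary effects and to the possible interaction between neighboring discontinuities; a close second is verifying that the approximation error is genuinely $O(\eta M^2)$ per discontinuity in the regime $\eta>0$, where the smoothness of $\sigma_\eta$ (rather than a Heaviside jump) must be accounted for when estimating both the width and the height of the transition windows and when checking that these windows, of width $O(\eta)$, do not overlap given $\Delta v \gg \eta$.
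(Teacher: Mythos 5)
Your proposal is correct and follows essentially the same route as the paper: the paper's proof likewise combines Proposition~\ref{prop:final-control-slow-system} with a probabilistic lemma (Lemma~\ref{lemma:good}) establishing $D$-goodness via exactly your union bounds for conditions~\ref{it:ass-Delta} and~\ref{it:ass-distance} (including the same joint-density computation $m(m-1)(1-(z-y))^{m-2}$ for the nearest neighbors of $v_i$), and with an approximation lemma (Lemma~\ref{lemma:final-error-at-convergence}) built on the same explicit competitor $\tilde a$, yielding the same two constraints that form $Q$. The only deviation is condition~\ref{it:ass-4}, where you use a Chernoff bound $\Prob(X \leqslant 5) \leqslant e^{-\lambda}(e\lambda/5)^5$ while the paper splits the $m$ neurons into six blocks of size $\lfloor m/6\rfloor$ and requires each block to hit every piece; both arguments are valid with the stated constants.
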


\subsection{From the two-timescale limit to the two-timescale regime}
\label{subsec:tt-regime}

We now briefly explain how the proof for the two-timescale limit can be adapted for the gradient flow problem in the two-timescale regime \eqref{eq:gf-recovery}, that is with a small but non-vanishing $\varepsilon$.
First note that the existence and uniqueness of the maximal solution to the dynamics \eqref{eq:gf-recovery} follow from the local Lipschitz-continuity of $\nabla_a L$ and $\nabla_u L$ with respect to both their variables.

The heuristics of Section \ref{sec:fast-slow-limit} indicate that, for $\varepsilon$ small enough, at any time $t$, the weights $a(t)$ are close to $a^*(u(t))$, the global minimizer of $L(\cdot, u(t))$. The next Proposition formalizes this intuition.

\begin{proposition}     \label{prop:final-control-gf-fast}
Assume that $a(0) = 0$ and that, for all $s \in [0, t]$, there at least $2$ positions $u_j(s)$ in each interval $[v_i, v_{i+1}]$ and
$
\Delta(u(s)) \geqslant \nicefrac{D}{2} 
$
for some $D \geq 32\eta$. Finally, assume that $\varepsilon \leqslant 2^{-16} D^2 M^{-2}(m+1)^{-5/2}$.
Then
\begin{align*}
   \|a(t) - a^*(u(t))\| \leqslant 3 M \sqrt{m+1} \exp^{- \frac{D}{16} t} + \frac{2^{17} M^3 (m+1)^3}{D^2} \varepsilon \, . 
\end{align*}
\end{proposition}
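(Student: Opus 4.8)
The plan is to introduce the error variable $e(t) = a(t) - a^*(u(t))$ and show that it obeys a linear ODE with an exponentially contracting homogeneous part and an $O(\varepsilon)$ forcing term; the two terms in the statement are then exactly the transient and the forced response. Since $L(\cdot, u)$ is quadratic, I would write $\nabla_a L(a,u) = H(u) a - b(u)$, where $H(u)$ is the Hessian of Proposition \ref{prop:unicity-minimizer} and $a^*(u) = H(u)^{-1} b(u)$. This gives $\tfrac{\diff a}{\diff t} = -H(u)\big(a - a^*(u)\big) = -H(u)\,e$. Differentiating $a^*(u(t))$ through the chain rule and using $\tfrac{\diff u}{\diff t} = -\varepsilon \nabla_u L(a,u)$ yields
\[
\frac{\diff e}{\diff t} = -H(u)\,e + \varepsilon\, \frac{\diff a^*}{\diff u}(u)\, \nabla_u L(a,u)\, .
\]
Because $\Delta(u(s)) \geq D/2 \geq 16\eta > 2\eta$, we have $u(s) \in \cU$ for all $s \in [0,t]$, so Proposition \ref{prop:unicity-minimizer} gives $\lambda_{\min}(H(u(s))) \geq \Delta(u(s))/8 \geq D/16$. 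Working with $\tfrac12 \tfrac{\diff}{\diff t}\norm{e}^2$ (to avoid non-differentiability at $e=0$) and Cauchy--Schwarz, I obtain $\tfrac{\diff}{\diff t}\norm{e} \leq -\tfrac{D}{16}\norm{e} + \varepsilon \big\lVert \tfrac{\diff a^*}{\diff u}(u)\nabla_u L(a,u)\big\rVert$.

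A Grönwall/comparison estimate on this scalar inequality then gives
\[
\norm{e(t)} \leq \norm{e(0)}\, \exp^{-\frac{D}{16}t} + \frac{16\varepsilon}{D}\sup_{s\in[0,t]} \Big\lVert \frac{\diff a^*}{\diff u}(u(s))\, \nabla_u L(a(s),u(s))\Big\rVert\, .
\]
The transient coefficient is $\norm{e(0)} = \norm{a^*(u(0))}$ since $a(0)=0$; I would bound it by $3M\sqrt{m+1}$ using that $f(\cdot; a^*(u), u)$ is the $L^2$-orthogonal projection of $f^*$ (hence has $L^2$-norm $\leq M$) together with the conditioning of the feature family from Proposition \ref{prop:unicity-minimizer}. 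This recovers the first term of the claim.

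The core of the work is the forcing estimate. For $\tfrac{\diff a^*}{\diff u}$ I would differentiate $a^* = H^{-1} b$, writing $\partial_{u_l} a^* = -H^{-1}(\partial_{u_l}H)a^* + H^{-1}\partial_{u_l} b$ and using $\norm{H^{-1}} \leq 16/D$; the crucial point is that $\partial_{u_l}H$ and $\partial_{u_l}b$ stay bounded \emph{independently of $\eta$} because the derivative of the feature hits $\sigma_\eta'$, whose $L^1$-mass $\int |\sigma_\eta'| = 1$ is scale-invariant, so there is no $1/\eta$ blow-up. For $\nabla_u L(a,u)$ I would use $\partial_{u_j}L = -\int (f^*-f(\cdot;a,u))\,a_j\,\sigma_\eta'(x-u_j)\,\diff x$, bounded by $|a_j|$ times the local sup of the residual, which is in turn controlled by $\norm{a}$ and $M$. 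Combining these yields a forcing bound of the order $\tfrac{2^{13}M^3(m+1)^3}{D}$, matching the second term after multiplying by $16\varepsilon/D$.

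The main obstacle is that this forcing bound depends on $\norm{a} \leq \norm{a^*(u)} + \norm{e}$, so the estimate is circular. I would close it with a bootstrap/continuity argument: let $T^\ast$ be the supremum of times in $[0,t]$ on which $\norm{e} \leq R$ for $R$ of order $M\sqrt{m+1}$; on $[0,T^\ast]$ the quantity $\norm{a}$ is controlled, the forcing bound above holds, and the Grönwall estimate gives $\norm{e} \leq 3M\sqrt{m+1} + \tfrac{2^{17}M^3(m+1)^3}{D^2}\varepsilon$. The hypothesis $\varepsilon \leq 2^{-16}D^2 M^{-2}(m+1)^{-5/2}$ makes the second term at most $2M\sqrt{m+1}$, so with $R = 5M\sqrt{m+1}$ the bound stays within the bootstrap region, forcing $T^\ast = t$ by continuity. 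This simultaneously delivers the stated inequality on all of $[0,t]$. The delicate parts are getting the $\eta$-uniform control of $\tfrac{\diff a^*}{\diff u}$ and tracking the powers of $M$ and $(m+1)$ carefully enough that the $\varepsilon$-threshold exactly closes the bootstrap.
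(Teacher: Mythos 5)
Your overall architecture coincides with the paper's: you track $V(s)=\|a(s)-a_\eta^*(u(s))\|$, use the eigenvalue bound of Proposition \ref{prop:unicity-minimizer} (via $\Delta(u(s))\geq D/2$) for exponential contraction, bound the forcing term $\frac{\diff}{\diff s}a_\eta^*(u(s))$ by the chain rule and the differentiation of $a^*=H^{-1}b$ (exactly Lemma \ref{lemma:bound-change-speed-a-star}), apply Gr\"onwall, and close a bootstrap on $\|a\|$. However, there is one genuine gap, and it sits at the keystone of the whole estimate: the bound $\|a_\eta^*(u)\|\leq 3M\sqrt{m+1}$. Your proposed derivation — orthogonal projection gives $\|f(\cdot;a^*(u),u)\|_{L^2}\leq M$, then invert the conditioning of the Gram matrix — only yields $\frac{D}{16}\|a^*(u)\|^2 \leq a^*(u)^\top H(u)\,a^*(u) \leq M^2$, i.e.\ $\|a^*(u)\|\leq 4M/\sqrt{D}$. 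This has the wrong dependence: $D$ carries no lower bound in terms of $1/(m+1)$ (in the application to Theorem \ref{thm:main}, $D=\delta/(6(m+1)^2)$, so $4M/\sqrt{D}\approx 10\,M(m+1)/\sqrt{\delta}$, which dwarfs $3M\sqrt{m+1}$). The error propagates everywhere: the transient constant becomes $4M/\sqrt{D}$ rather than $3M\sqrt{m+1}$; the forcing bound, which multiplies $\|\partial a_\eta^*/\partial u\|$ (itself involving $\|a_\eta^*(u)\|$) by $\|\nabla_u L_\eta(a,u)\|\lesssim \sqrt{m+1}\|a\|^2+M\|a\|$, picks up extra powers of $1/D$ and no longer gives $2^{13}M^3(m+1)^3/D$; and your bootstrap with $R=5M\sqrt{m+1}$ cannot even be started, since at $s=0$ you have no guarantee that $\|e(0)\|=\|a_\eta^*(u(0))\|\leq R$.

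The tell is that your argument never uses two of the proposition's hypotheses: that every piece $[v_i,v_{i+1}]$ contains at least two neurons, and that $D\geq 32\eta$. These are precisely what the paper needs for the sharp bound. With two neurons per piece, $a_0^*(u)$ has the explicit form computed in Section \ref{sec:sketch}: each entry is either zero or a jump of the optimal piecewise-constant approximant, whose values lie in $[-M,M]$, hence $\|a_0^*(u)\|_\infty \leq 2M$ (first claim of Lemma \ref{lemma:upper-bound-gradient}) and $\|a_0^*(u)\|\leq 2M\sqrt{m+1}$ — a bound with no $D$ in it. Then Lemma \ref{lemma:control-solution-fast-system} transfers this to $\eta>0$: $\|a_\eta^*(u)-a_0^*(u)\|\leq 16M\sqrt{m+1}\,\eta/\Delta(u)\leq 32M\sqrt{m+1}\,\eta/D\leq M\sqrt{m+1}$, the last step using $D\geq 32\eta$. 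Summing gives $\|a_\eta^*(u)\|\leq 3M\sqrt{m+1}$, which is what makes the stated constants — in the transient term, in the forcing estimate, and in the closing of the bootstrap under the hypothesis on $\varepsilon$ — actually come out. Replacing your projection argument by this explicit-formula argument repairs the proof and essentially reproduces the paper's.
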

The crucial condition in the Proposition is $\Delta(u(s)) \geq \nicefrac{D}{2}$; it is useful to control the conditioning of the quadratic form $L(\cdot, u(s))$. The Proposition shows that $\|a(t) - a^*(u(t))\|$ is upper bounded by the sum of two terms; the first term is a consequence to the initial gap between $a(0)$ and $a^*(u(0))$ and decays exponentially quickly. The second term is negligible in the regime $\varepsilon \ll 1$.

Armed with this Proposition, we show that the two-timescale regime has the same behavior as the two-timescale limit and thereby prove Theorem~\ref{thm:main}.

\section{Numerical experiments and discussion}
\label{sec:experiments}

\paragraph{Numerical illustration in the setting of Section \ref{sec:main}.}
We first compare the dynamics of the gradient flow in the two-timescale limit presented in Section \ref{sec:sketch} with the dynamics of SGD. To simulate the SGD dynamics, 
we assume that we have access to noisy observations of the value of $f^* \in \mathcal{F}_{n, \Delta v, \Delta f, M}$: let $(X_p, Y_p)_{p \geqslant 1}$ be i.i.d.~random variables such that $X_p$ is uniformly distributed on~$[0,1]$, and $Y_p = f^*(X_p) + N_p$ where $N_p$ is additive noise. The (one-pass) SGD updates are then given by
\begin{equation}
 \begin{alignedat}{2}
 a_{p+1} &= a_p - h \nabla_a \ell (X_{p+1}, Y_{p+1}; a_p, u_p)  \, , \\
    u_{p+1} &= u_p  - \varepsilon h \nabla_u \ell (X_{p+1}, Y_{p+1}; a_p, u_p) \, , \label{eq:sgd}   
\end{alignedat}   
\end{equation}
with $\ell (X, Y; a, u) = \frac{1}{2}(Y - f(X; a, u))^2$.
The experimental settings, as well as additional results, are given in the Appendix. 

Remarkably, the dynamics of SGD in the two-timescale regime with $\eta$ small match closely the gradient flow in the two-timescale limit with $\eta=0$, as illustrated in  Figure \ref{fig:fast-slow-comparison-with-limit}. This validates the use of the gradient flow to understand the training dynamics with SGD. 
Both dynamics are close until the two-timescale limit achieves perfect recovery of the target function, at which point the SGD stabilizes to a small non-zero error.
The fact that SGD does not achieve perfect recovery is not surprising, since SGD is performed with $\eta > 0$ and $f^*$ is not in the span of {$\{1, \sigma_\eta({x-u_1}), \dots, \sigma_\eta({x-u_m})\}$} for any $u_1, \dots, u_m$ and for $\eta > 0$. On the contrary, we simulated the dynamics of gradient flow for $\eta = 0$, as presented in Section \ref{sec:sketch}, enabling perfect recovery in that case. 

\begin{figure}[ht]
     \centering
     \begin{subfigure}[b]{0.49\textwidth}
         \centering
         \includegraphics[height=5cm]{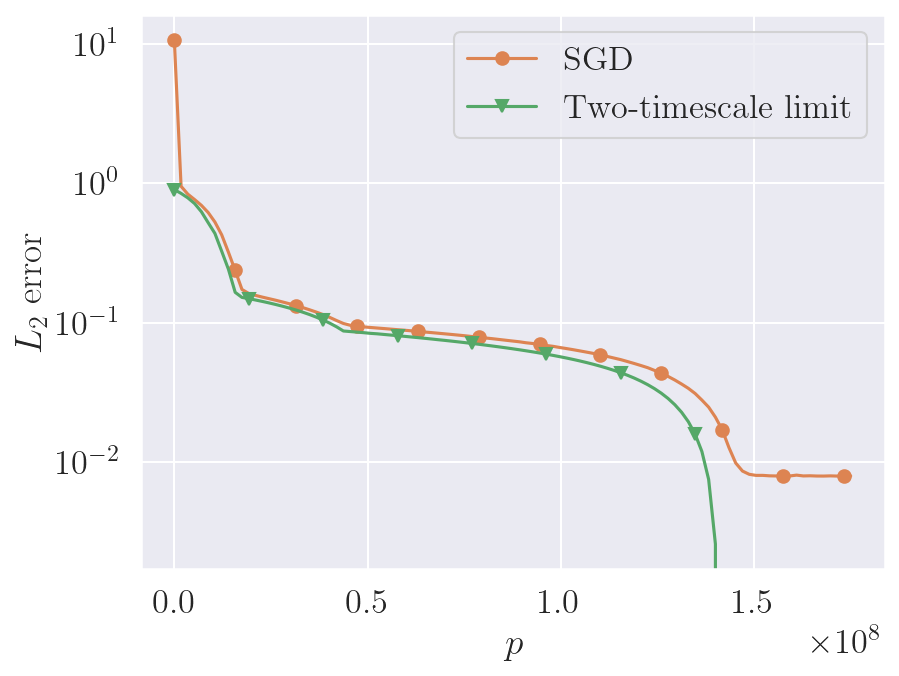}
         \caption{Loss $L$ as a function of the number of steps $p$}
     \end{subfigure}
     \hfill
     \begin{subfigure}[b]{0.49\textwidth}
         \centering
         \includegraphics[height=5cm]{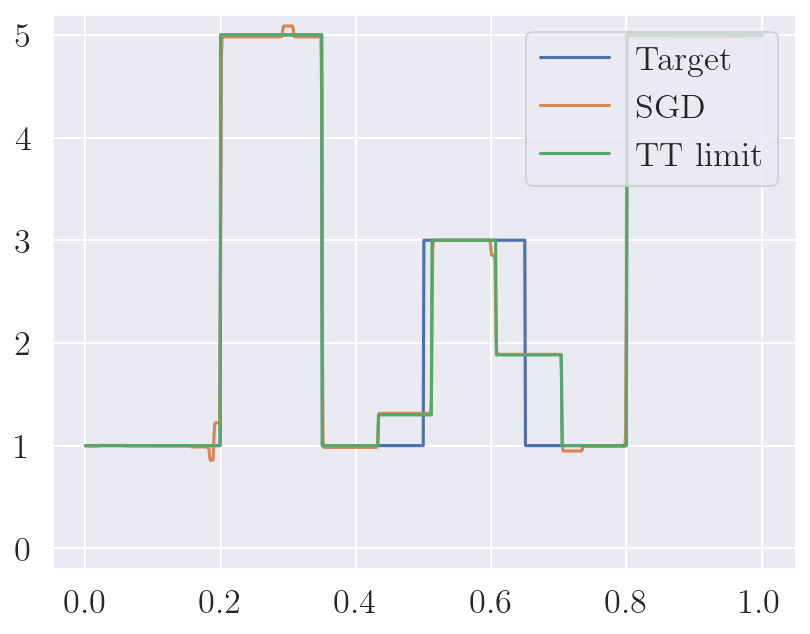}
         \caption{Plot of the functions after $p = 2.7 \cdot 10^7$ steps}
     \end{subfigure}
     \caption{Comparison between the SGD \eqref{eq:sgd} with $\eta = 4 \cdot 10^{-3}$ in the two-timescale regime ($\varepsilon = 2 \cdot 10^{-5}$) and the gradient flow in the two-timescale limit \eqref{eq:fs-limit} with $\eta = 0$. In the left-hand plot, to align the SGD and the two-timescale limit, we take $\tau = \varepsilon h p$.
     In the right-hand plot, the target function is in blue, the gradient flow in the two-timescale limit is in green, 
   and the SGD is in orange.
 }
   \label{fig:fast-slow-comparison-with-limit}
\end{figure}

Next, we compare the SGD dynamics in the two-timescale regime ($\varepsilon \ll 1$) and outside of this regime ($\varepsilon \approx 1$). In Figure \ref{fig:fast-slow-simulation}, we see that the network trained by SGD (in orange) in the two-timescale regime $\varepsilon = 2 \cdot 10^{-5}$,  achieves near-perfect recovery.    
If we change $\varepsilon$ to $1$, while keeping all other parameters equal, the algorithm fails to recover the target function (Figure \ref{fig:same-rate-simulation}). This shows that, in our setting with a moderate number of neurons~$m$, recovery can fail away from the two-timescale regime. It could seem that we are favouring the two-timescale regime by running it for more steps. In fact, it is not the case since both regimes are run until convergence. We refer to Appendix \ref{apx:experiments} for details.

 Note that the dynamics of the neurons in Figures \ref{fig:fast-slow-simulation} and \ref{fig:same-rate-simulation} are different. In the two-timescale regime, only the neurons closest to a discontinuity move significantly, while the others do not. These dynamics correspond to the sketch of Section \ref{sec:heuristics}. Interestingly, it means that in this regime, the neural network learns a sparse representation of the target function, meaning that only $n$ out of the $m$ neurons are active after training. On the contrary, when $\varepsilon = 1$, all neurons move to align with discontinuities of the target function, thus the learned representation is not sparse.
Furthermore, since the number of neurons is moderate, one of the discontinuities is left without any neuron.
    
    \begin{figure}[ht!]
    \centering
     \begin{subfigure}[b]{0.32\textwidth}
         \centering
         \includegraphics[width=\textwidth]{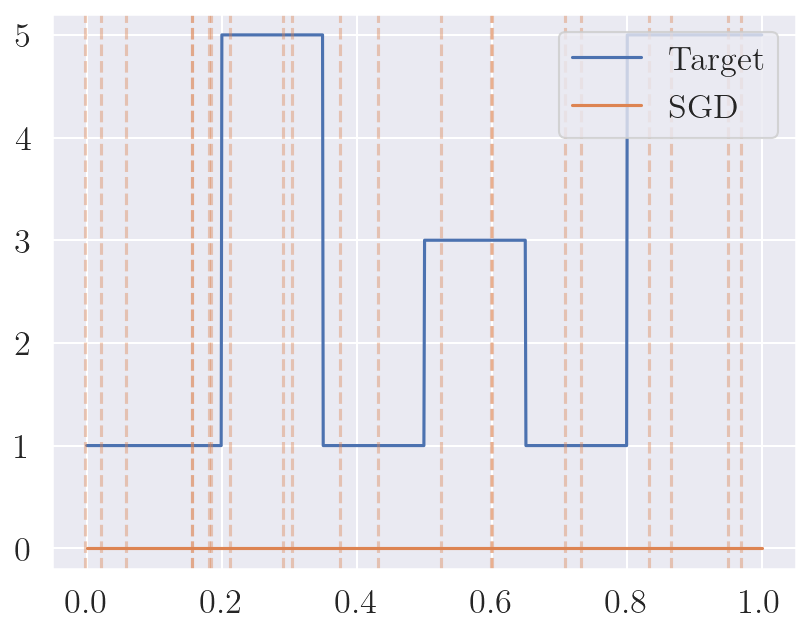}
         \caption{At initialization $p=0$}
     \end{subfigure}
     \hfill
     \begin{subfigure}[b]{0.32\textwidth}
         \centering
         \includegraphics[width=\textwidth]{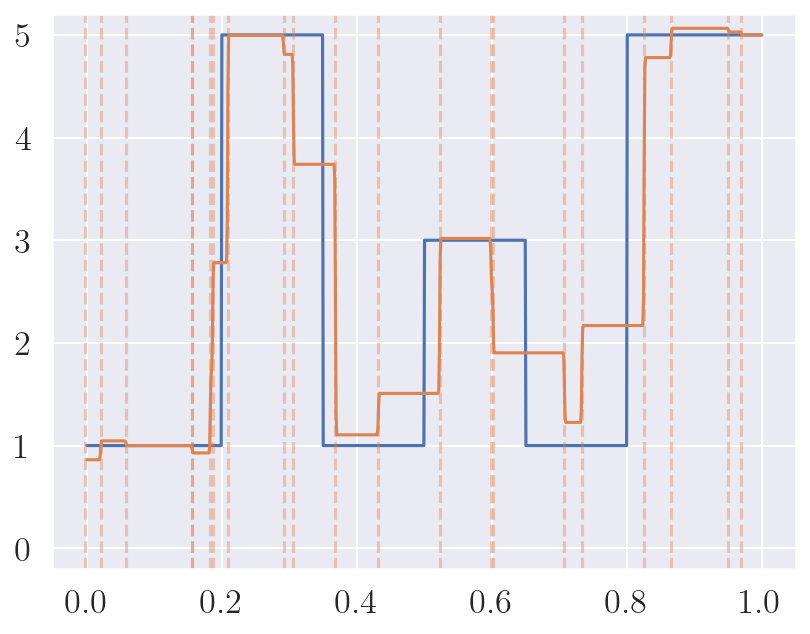}
         \caption{After $p = 5.4 \cdot 10^6$ steps}
     \end{subfigure}
     \hfill
     \begin{subfigure}[b]{0.32\textwidth}
         \centering
         \includegraphics[width=\textwidth]{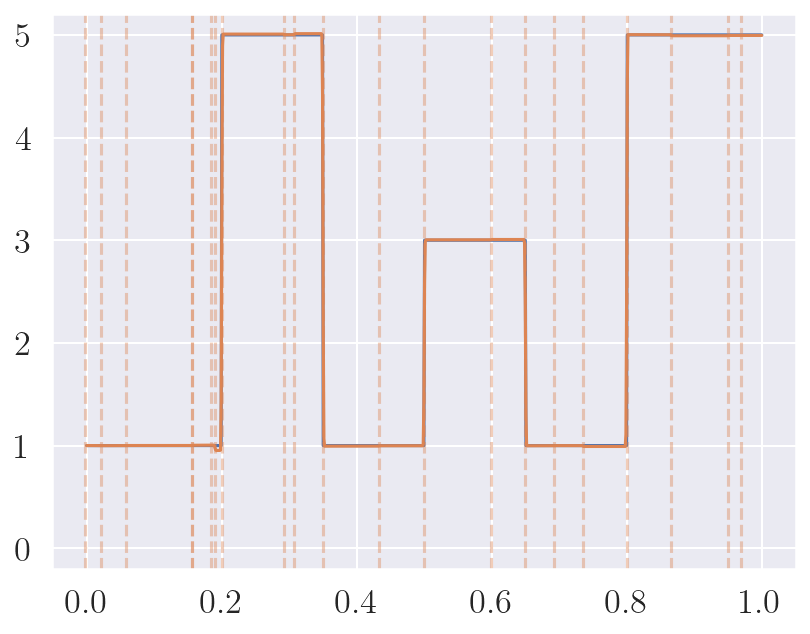}
         \caption{After $ p = 1.8 \cdot 10^8$ steps}
     \end{subfigure}
       \caption{Simulation in the two-timescale regime ($\varepsilon = 2 \cdot 10^{-5}$). The target function is in blue and the SGD \eqref{eq:sgd} is in orange with $\eta = 4 \cdot 10^{-3}$, $h = 10^{-5}$. The positions $u_1, \dots, u_m$ of the neurons are indicated with vertical dotted lines. 
       In a first short phase, only the weights $a_1, \dots, a_m$ of the neurons evolve to match as best as possible the target function (second plot). Then, in a longer phase, the neuron closest to each target discontinuity moves towards it (third plot). Recovery is achieved.
       }
       \label{fig:fast-slow-simulation}
    \end{figure}

    \begin{figure}[ht!]
    \centering
     \begin{subfigure}[b]{0.32\textwidth}
         \centering
         \includegraphics[width=\textwidth]{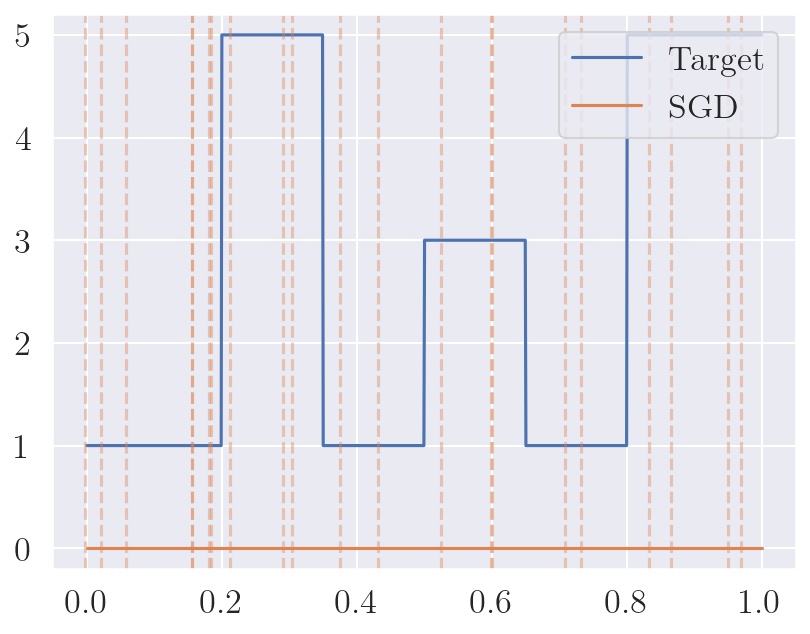}
         \caption{At initialization $p=0$}
     \end{subfigure}
     \hfill
     \begin{subfigure}[b]{0.32\textwidth}
         \centering
         \includegraphics[width=\textwidth]{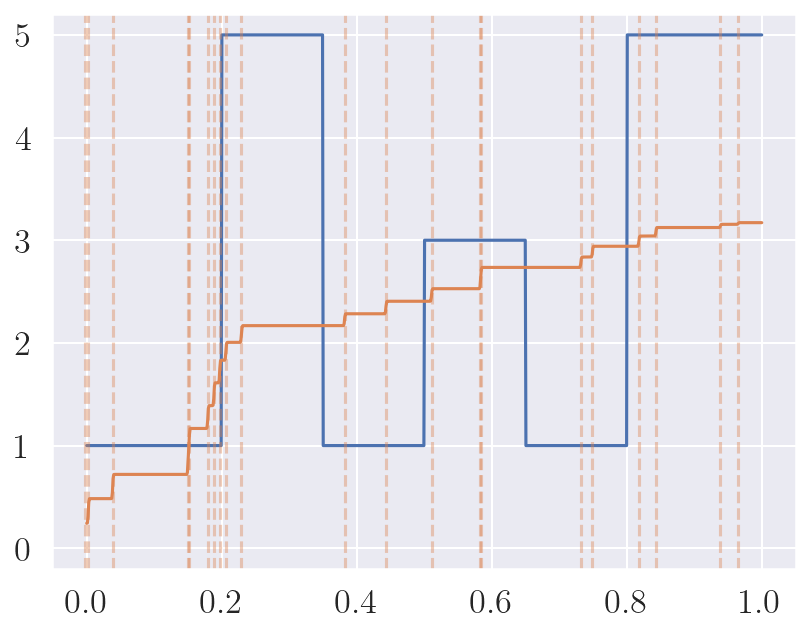}
         \caption{After $p= 10^4$ steps}
     \end{subfigure}
     \hfill
     \begin{subfigure}[b]{0.32\textwidth}
         \centering
         \includegraphics[width=\textwidth]{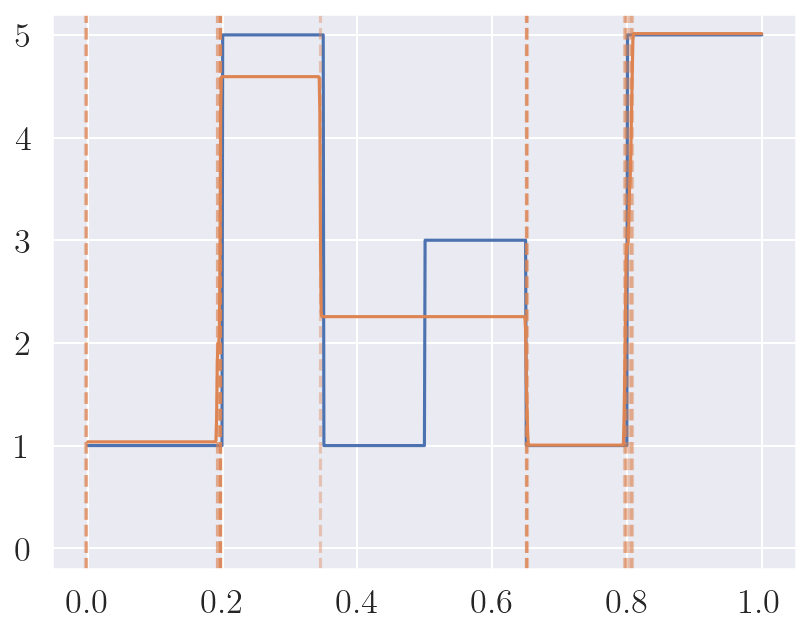}
         \caption{After $p = 10^6$ steps}
     \end{subfigure}
       \caption{Simulation outside of the two-timescale regime ($\varepsilon = 1$). The target function is in blue and the SGD \eqref{eq:sgd} 
       is in orange with $\eta = 4 \cdot 10^{-3}$, $h = 10^{-5}$. The positions $u_1, \dots, u_m$ of the neurons are indicated with vertical dotted lines. The dynamics create a zone with no neuron, hindering recovery.
       }
       \label{fig:same-rate-simulation}
    \end{figure}

\paragraph{Discussion.} The two-timescale regime decouples  the dynamics of the two layers of the neural network. As a consequence, it is a useful theoretical tool to simplify the evolution equations. In this paper showcasing the approach, the two-timescale regime enables to show the alignment of the neurons with the discontinuities of the target function in the piecewise constant 1D case, and thus to prove recovery.
A full general understanding of the impact of the two-timescale regime is an open question, which is left for future work. We provide in the following some practical evidence of the applicability of this regime to other settings (higher-dimensional problems, ReLU networks, finite sample size). Additional technical difficulties significantly complicate the proof in these settings, but we believe that there is no fundamental reason that our mathematical approach should not apply.

\paragraph{Higher dimensions.} 
We consider piecewise constant functions on~$\mathbb{R}^d$ with pieces that are cuboids aligned with the axes of the space (see Figure \ref{fig:2d}). Neural networks are of the form
$f(x; a, u) = a_0 + \sum_{j=1}^m \sum_{k=1}^d a_{jk} \sigma_\eta (x_k - u_{jk}),$
where the $j$-th neuron has $d$-dimensional position $(u_{jk})_{1 \leq k \leq d}$ and weight $(a_{jk})_{1 \leq k \leq d}$. The results are similar to the 1D case: convergence to a global minimum is obtained in the two-timescale regime but not in the standard regime.

\begin{figure}[ht!]
    \begin{subfigure}[t]{0.45\textwidth}
        \centering
        \includegraphics[width=\textwidth]{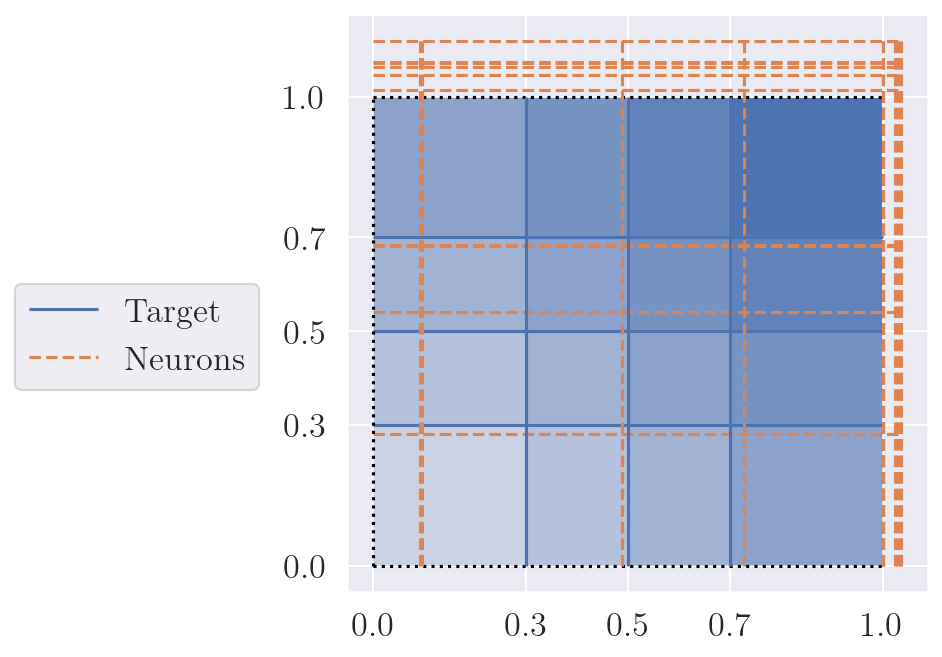}
        \caption{After training in the standard regime}
        \label{plota}
    \end{subfigure}
    \hfill
    \begin{subfigure}[t]{0.45\textwidth}
        \centering
        \includegraphics[width=\textwidth]{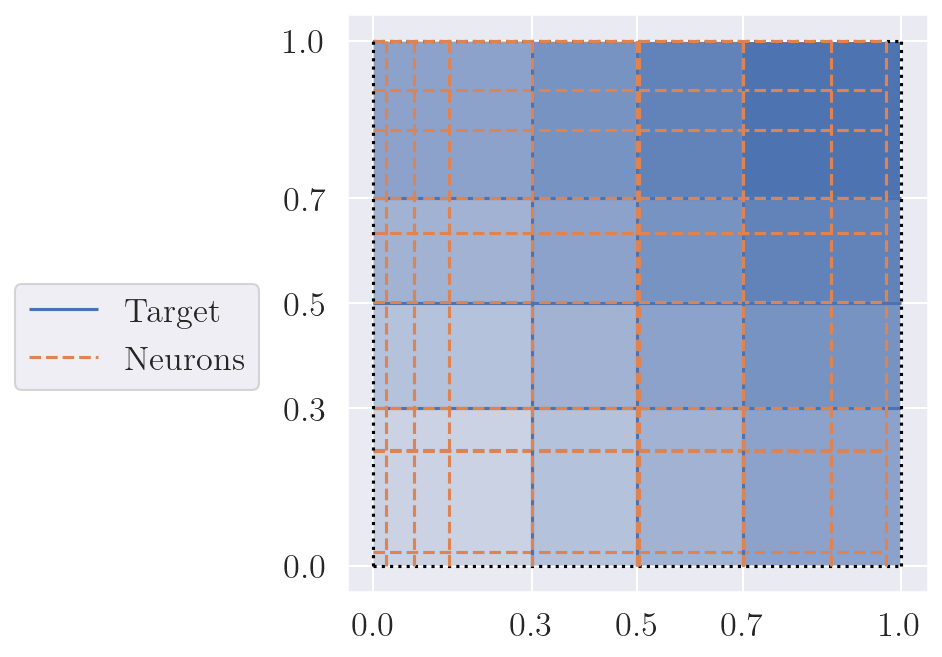}
        \caption{After training in the two-timescale regime}
        \label{plotb}
    \end{subfigure}
    \caption{2D experiment with a piecewise-constant target (each shade of blue depicts a constant piece). The orange lines show the positions of the neurons after training by SGD. 
    In the standard regime, a discontinuity of the target at $x=0.3$ is not covered by a neuron. In the two-timescale regime, all the target discontinuities are covered. See Appendix \ref{apx:experiments} for more results with $d=2, 10$.}
    \label{fig:2d}
\end{figure}

\paragraph{ReLU networks.} Appendix \ref{apx:experiments} reports the case of using ReLU activations to approximate piecewise-affine targets. A similar conclusion holds.

\paragraph{Finite sample size.} We believe that it should be possible to generalize our results to finite sample sizes (say, for single-pass SGD), following a perturbation analysis similar to the one of Section~\ref{subsec:tt-regime}, with additional terms due to random sampling. Numerically, Figure \ref{fig:fast-slow-comparison-with-limit} shows that SGD indeed closely follows the behavior of the two-timescale limit. Some ideas about the proof are provided in Appendix~\ref{subsec:ideas-sgd}.

\begin{ack}
The authors thank Emmanuel Abb\'e for many discussions and suggestions on this project, and Quentin Berthet for pointing them towards the envelope theorem. The authors are grateful towards the Simons Institute for the Theory of Computing where this work was initiated.
P.M.~is supported by a grant from Région Île-de-France, by a Google PhD Fellowship and by Mines Paris - PSL.
\end{ack}

\bibliographystyle{plainnat}
\bibliography{references}

\newpage

\appendix

\begin{center}
    \Large{\textbf{Appendix}}
\end{center}
    
\bigskip

\paragraph{Organization of the Appendix} In Section \ref{apx:lemmas}, we introduce additional notations that will be used throughout the Appendix, then proceed to prove useful technical lemmas. We proceed in Section~\ref{apx:proofs} to prove the results presented in the main text. Section \ref{apx:experiments} contains details about our experimental settings as well as some additional simulations. Section \ref{apx:remarks} gives a couple of additional insights about our results.

\section{Additional notations and technical lemmas} \label{apx:lemmas}

For a vector $a$, we denote $\|a\|$ its $\ell^2$-norm, $\Vert a \Vert_1$ its $\ell^1$-norm and $\Vert a \Vert_\infty$ its $\ell^\infty$-norm. For matrices~$H$, $\|H\|$ denotes the operator norm associated to the $\ell^2$ norm and $\Vert H \Vert_{\text{F}}$ denotes the Frobenius norm.  For real-valued functions $f$, $\| f\|_\infty$ denotes the supremum norm.

In all of the Appendix, we denote $u_0 = -{\eta}/{2}$ and $u_{m+1} = 1 + \eta/2$. Note that $\sigma_\eta(x - u_0) = 1$ for all $x \in [0, 1]$, meaning that $\sigma_\eta(\cdot - u_0)$ corresponds to the bias term. This notation allows to treat the bias term in a unified fashion with respect to the other terms of $f(x;a,u)$. Since $u_i \in (0, 1)$ for $i \in \{1, \dots, m\}$, we assume in the following w.l.o.g.~that the $(u_i)_{0 \leqslant i \leqslant m+1}$ are ordered in increasing order. Note that we prove in the following that the $(u_i)_{1 \leqslant i \leqslant m}$ do not cross during the dynamics, so they remain ordered throughout the dynamics.

The proofs involve comparisons of some quantities when $\eta > 0$ and when $\eta = 0$. To avoid confusion, we make explicit the dependency of $L$ on $\eta \geqslant 0$, i.e., we let $L_\eta(a, u)$ in place of $L(a, u)$ of the main paper, and similarly, when the $\argmin$ is well-defined and unique,
\[
a_\eta^*(u) = \argmin_{a \in \R^{m+1}} L_\eta(a, u) \, .
\]
in place of $a^*(u)$. Similarly, we now make explicit the dependence of $f$ on $\eta \geqslant 0$, i.e., we denote 
\begin{equation*}
    f_\eta(x;a,u) = a_0 + \sum_{j=1}^m a_j \sigma_\eta(x-u_j) = \sum_{j=0}^m a_j \sigma_\eta(x-u_j) \, .
\end{equation*}
The Hessian of the quadratic function $L_\eta(\cdot, u)$ is denoted $H_\eta(u) \in \R^{(m+1) \times (m+1)}$ (in place of $H(u)$), and satisties that, for $i, j \in \{0, \dots, m\}$,
\begin{equation*}
    H_{\eta, ij}(u) = \int_0^1 \sigma_\eta(x - u_i)\sigma_\eta(x - u_j) \diff x \, .
\end{equation*}
Also let, for $\eta \geqslant 0$ and $u \in \R^m$, $b_\eta(u) \in \R^{m+1}$ such that, for $j \in \{0, \dots, m\}$,
\[
    b_{\eta, j}(u) = \int_0^1 f^*(x)\sigma_\eta(x - u_{j}) \diff x \, .
\]
Finally, we let $\cU_\eta$ in place of $\cU$ in the paper.

With these notations, we have, for $\eta \geqslant 0$ and $a, u \in \R^m$,
\begin{align}    
\frac{\partial L_\eta}{\partial u_j}(a,u) &=  \int_0^1 \frac{\partial f_\eta(x;a,u)}{\partial u_j} \left(  f_\eta(x;a,u) - f^*(x) \right) \diff x \nonumber \\
&= - a_j \int_0^1 \sigma'_\eta(x-u_j) \Big( \sum_{k=0}^m a_k \sigma_\eta (x-u_k) - f^*(x) \Big) \diff x \, . \label{eq:def-gradient-u}
\end{align}
and
\begin{align} 
\frac{\partial L_\eta}{\partial a_j}(a,u) &=  \int_0^1 \frac{\partial f_\eta(x;a,u)}{\partial a_j} \left(  f_\eta(x;a,u) - f^*(x) \right) \diff x \nonumber \\
&= \int_0^1 \sigma_\eta(x-u_j) \Big( \sum_{k=0}^m a_k \sigma_\eta (x-u_k) - f^*(x) \Big) \diff x \nonumber \\ &= H_{\eta,j}(u)^\top a - b_{\eta,j}(u) \, . \label{eq:def-gradient-a}
\end{align} 

We now move on to a series to lemmas that will be helpful in the proofs of Appendix \ref{apx:proofs}.

\begin{lemma}   \label{lemma:upper-bound-b}
For $\eta \geqslant 0$ and $u \in \R^m$, we have
\[
    \|b_\eta(u) - b_0(u)\| \leqslant M \eta \sqrt{m+1} \quad \textnormal{and} \quad \|b_\eta(u)\| \leqslant M \sqrt{m+1} \, .
\]
\end{lemma}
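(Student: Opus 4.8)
The plan is to derive both inequalities directly from elementary pointwise bounds on $\sigma_\eta$, handling each of the $m+1$ coordinates separately and then assembling the $\ell^2$-norm at the end.

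First I would dispatch the second bound, which is immediate. Since $\sigma$ takes values in $[0,1]$, so does $\sigma_\eta$, and since $|f^*(x)| \leqslant M$ for every $x \in [0,1]$ by the last condition in Definition~\ref{def:piecewise-constant}, each coordinate satisfies
\[
|b_{\eta,j}(u)| = \Big| \int_0^1 f^*(x) \sigma_\eta(x-u_j) \diff x \Big| \leqslant \int_0^1 |f^*(x)| \, \sigma_\eta(x-u_j) \diff x \leqslant M \, .
\]
Summing the squares over $j \in \{0, \dots, m\}$ and taking the square root gives $\|b_\eta(u)\| \leqslant M\sqrt{m+1}$.

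For the first bound, the key observation is that $\sigma_\eta(\cdot - u_j)$ and $\sigma_0(\cdot - u_j)$ coincide outside the transition window $[u_j - \eta/2,\, u_j + \eta/2]$: by the defining properties of $\sigma$, for $x - u_j < -\eta/2$ both equal $0$, and for $x - u_j > \eta/2$ both equal $1$. On this window, of length at most $\eta$, both functions lie in $[0,1]$, so their difference is bounded by $1$ in absolute value. Hence
\[
|b_{\eta,j}(u) - b_{0,j}(u)| \leqslant \int_{[u_j-\eta/2,\, u_j+\eta/2] \cap [0,1]} |f^*(x)| \diff x \leqslant M\eta \, ,
\]
and assembling the $\ell^2$-norm over the $m+1$ coordinates yields $\|b_\eta(u) - b_0(u)\| \leqslant M\eta\sqrt{m+1}$.

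There is essentially no real obstacle here; the only points requiring minor care are verifying that $\sigma_\eta$ and $\sigma_0$ genuinely agree off the window---in particular that the single-point discrepancy at $x = u_j$, where $\sigma_0 = 1/2$, is irrelevant since it has Lebesgue measure zero---and noting that intersecting the window with $[0,1]$ can only shorten it, so the length factor $\eta$ remains a valid upper bound regardless of whether $u_j$ lies near the endpoints of $[0,1]$.
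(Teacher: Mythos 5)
Your proof is correct and follows essentially the same route as the paper's: both arguments bound each coordinate $|b_{\eta,j}(u) - b_{0,j}(u)|$ by $M\eta$ using that $\sigma_\eta(\cdot - u_j)$ and $\sigma_0(\cdot - u_j)$ agree off a window of length $\eta$ where their difference is at most $1$, bound $|b_{\eta,j}(u)|$ by $M$ directly, and then pass to the $\ell^2$-norm with a factor $\sqrt{m+1}$. The only difference is cosmetic: the paper pulls out $\|f^*\|_\infty$ and bounds $\int_0^1 |\sigma_\eta - \sigma_0|$ as a whole, whereas you restrict the integral to the window first, and you make explicit the measure-zero and boundary-truncation remarks that the paper leaves implicit.
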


\begin{proof}
    For any $j \in \{0, \dots, m\}$,
\begin{align*}
    |b_{\eta,j}(u) - b_{0,j}(u)| &= \Big\vert\int_0^1 f^*(x) (\sigma_\eta(x - u_{j}) - \sigma_0(x - u_{j})) \diff x \Big\vert \\
    &\leqslant \|f^*\|_\infty  \int_0^1 \left\vert \sigma_\eta(x - u_{j}) - \sigma_0(x - u_{j}) \right\vert \diff x \\
    &\leqslant M \eta \, ,
\end{align*}
where in the last step we use that $\|f^*\|_\infty \leqslant M$ and that $\sigma_\eta(x) = 0$ for $x \leqslant -\eta/2$, $\sigma_\eta(x) \in [0,1]$ for $ -\eta/2 < x < \eta/2$ and $\sigma_\eta(x) = 1$ for $x \geqslant \eta/2$.

Similarly,
\[
    |b_{\eta,j}(u)| = \Big\vert\int_0^1 f^*(x) \sigma_\eta(x - u_{j}) \diff x \Big\vert  \leqslant \|f^*\|_\infty \leqslant M \, .
\] 
\end{proof}

\begin{lemma}   \label{lemma:h-equal-h0}
For $\eta \geq 0$ and $u \in \cU_\eta$, $H_\eta(u) = H_0(u) + D_\eta$, where $D_\eta$ is a diagonal matrix whose elements are independent of $u$ and bounded in absolute value by $\eta / 2$.
\end{lemma}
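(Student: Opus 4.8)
The plan is to compute the entries of $H_\eta(u) - H_0(u)$ directly, exploiting that on $\cU_\eta$ the transition regions of the features are pairwise disjoint and contained in $(0,1)$. Fix $u \in \cU_\eta$; by definition $\Delta(u) > 2\eta$, so any two centers among $u_0, \dots, u_m$ are more than $2\eta$ apart, while the nonconstant part of $\sigma_\eta(\cdot - u_j)$ lives on the interval $[u_j - \eta/2, u_j + \eta/2]$ of half-width $\eta/2$. Consequently these transition intervals are pairwise separated by more than $\eta$, and the separation of each $u_j$ (for $j \geq 1$) from $u_0 = -\eta/2$ and $u_{m+1} = 1 + \eta/2$ forces $3\eta/2 < u_j < 1 - 3\eta/2$, so that $[u_j - \eta/2, u_j + \eta/2] \subset (0,1)$. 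I would also record that $\sigma_\eta(\cdot - u_0) \equiv 1$ on $[0,1]$, so the $0$-th feature is the constant function.

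For the off-diagonal entries I would show $H_{\eta,ij}(u) = H_{0,ij}(u)$ for $i \neq j$. Assuming without loss of generality $u_i < u_j$, whenever $\sigma_\eta(x - u_j) \neq 0$ one has $x > u_j - \eta/2 > u_i + \eta/2$, hence $\sigma_\eta(x - u_i) = 1$; thus $\sigma_\eta(x-u_i)\sigma_\eta(x-u_j) = \sigma_\eta(x-u_j)$ identically, and the same collapse holds for $\sigma_0$. Therefore $H_{\eta,ij}(u) - H_{0,ij}(u) = \int_0^1 \big(\sigma_\eta(x-u_j) - \sigma_0(x-u_j)\big)\,\diff x$. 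After the substitution $y = x - u_j$ this integral reduces to one over $[-\eta/2, \eta/2]$ (outside this range the two functions agree), where $\sigma_\eta(y) - \sigma_0(y) = (\sigma(\eta^{-1}y) - 1/2) - (\sigma_0(y) - 1/2)$ is odd in $y$, using that $\sigma - 1/2$ is odd and that $\sigma_0(y) - 1/2 = \tfrac12 \sign y$. An odd function integrates to $0$ over a symmetric interval, so the off-diagonal entries coincide and $D_\eta := H_\eta(u) - H_0(u)$ is diagonal.

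For the diagonal entries I would compute $D_{\eta,jj} = \int_0^1 \big(\sigma_\eta(x-u_j)^2 - \sigma_0(x-u_j)^2\big)\,\diff x$. For $j = 0$ both features equal $1$ on $[0,1]$, giving $D_{\eta,00} = 0$. For $j \geq 1$ the integrand is supported on $[u_j - \eta/2, u_j + \eta/2] \subset (0,1)$, and the substitutions $y = x - u_j$ then $z = \eta^{-1}y$ yield $D_{\eta,jj} = \eta \int_{-1/2}^{1/2}\big(\sigma(z)^2 - \sigma_0(z)^2\big)\,\diff z = \eta\big(\int_{-1/2}^{1/2}\sigma(z)^2\,\diff z - \tfrac12\big)$, where I used $\sigma_0(z)^2 = \sigma_0(z)$ almost everywhere. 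This expression is manifestly independent of $u$ (and of $j$), which proves the first claim. Finally, since $0 \leq \sigma(z) \leq 1$ gives $\int_{-1/2}^{1/2}\sigma(z)^2\,\diff z \in [0,1]$, the diagonal entries satisfy $|D_{\eta,jj}| \leq \eta/2$, completing the bound.

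The computations are elementary; the only genuine content is the geometric observation that the separation encoded in $\cU_\eta$ disentangles the features, together with the repeated use of the oddness of $\sigma - 1/2$, which is exactly what makes the off-diagonal discrepancy vanish and renders the diagonal correction independent of $u$. The step needing the most care is the boundary bookkeeping: verifying $[u_j - \eta/2, u_j + \eta/2] \subset (0,1)$ for $j \geq 1$ and treating the constant $0$-th feature separately, so that no transition region is clipped by the endpoints of $[0,1]$ and the symmetric-interval arguments apply verbatim.
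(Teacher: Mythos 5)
Your proof is correct and follows essentially the same route as the paper's: a case-by-case computation of the entries of $H_\eta(u)-H_0(u)$, using the separation $\Delta(u)>2\eta$ to disentangle the transition regions, the oddness of $\sigma-1/2$ to make the off-diagonal discrepancy vanish, and the explicit value $\eta\bigl(\int_{-1/2}^{1/2}\sigma^2-\tfrac12\bigr)$ for the diagonal correction, bounded by $\eta/2$. The only cosmetic difference is that you work directly with the difference of the two Gram matrices, whereas the paper first computes $H_{0,ij}(u)=1-\max(u_i,u_j,0)$ in closed form and then matches $H_\eta$ against it; the content is identical.
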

\begin{proof}
Let $i,j \in \{0, \dots, m\}$, and denote $c=\max(u_i, u_j, 0)$. Then
\[
H_{0,ij}(u) = \int_0^1 \sigma_0(x-u_i) \sigma_0(x-u_j) dx = 1 - c \, .    
\]
If $i=j=0$, $\max(u_i, u_j) = - \eta / 2$, and $H_{\eta, ij}(u) = 1 = H_{0,ij}(u)$. If $i=j \neq 0$,
\begin{align*}
H_{\eta, ij}(u) &= \int_0^1 \sigma_\eta(x-c)^2 dx \\
&= 1 - c - \frac{\eta}{2} + \int_{c-\eta/2}^{c+\eta/2} \sigma_\eta(x-c)^2 dx \\
&= H_{0,ij}(u) - \frac{\eta}{2} +  \eta \int_{-1/2}^{1/2} \sigma^2 \, .
\end{align*}
Note that the last integral is non-negative and less than $1$, hence $|H_{\eta, ij}(u) - H_{0, ij}(u)| \leq \eta / 2$.
Finally, if $i \neq j$, since $|u_i - u_j| > \eta$,
\begin{align*}
H_{\eta, ij}(u) &= \int_0^1 \sigma_\eta(x-u_i) \sigma_\eta(x-u_j) dx = \int_0^1 \sigma_\eta(x-\max(u_i, u_j)) dx \, .
\end{align*}
Furthermore, $0 < \max(u_i, u_j) < 1 - \frac{\eta}{2}$, thus
\begin{align*}
H_{\eta, ij}(u) &= \int_0^1 \sigma_\eta(x-c) dx = 1 - c - \frac{\eta}{2} + \int_{c-\eta/2}^{c+\eta/2} \sigma_\eta(x-c) dx = 1 - c \, ,
\end{align*}
where the last equality comes from the oddness of $\sigma - 1/2$.
\end{proof}

\begin{lemma}   \label{lemma:bound-change-speed-a-star}
For $\eta > 0$, let $a_\eta^*: u \in \mathcal{U}_\eta \mapsto a_\eta^*(u)$. Then $a_\eta^*$ is differentiable and for any $u \in \mathcal{U}_\eta$,
\[
\Big\|\frac{\partial a_\eta^*(u)}{\partial u}\Big\| \leqslant \frac{8}{\Delta (u)} \Big(2 (m+1) \|a_\eta^*(u)\| + M\Big) \, .
\]
\end{lemma}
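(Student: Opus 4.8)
The plan is to use the first-order optimality condition to obtain a closed form for $a_\eta^*(u)$ and then differentiate it. By \eqref{eq:def-gradient-a} the stationarity condition $\nabla_a L_\eta(a,u)=0$ reads $H_\eta(u)\,a=b_\eta(u)$, and since Proposition~\ref{prop:unicity-minimizer} guarantees that $H_\eta(u)$ is positive definite, hence invertible, on $\mathcal{U}_\eta$, we have $a_\eta^*(u)=H_\eta(u)^{-1}b_\eta(u)$. Because $\sigma$ is twice continuously differentiable, the maps $u\mapsto H_\eta(u)$ and $u\mapsto b_\eta(u)$ are $C^1$ on $\mathcal{U}_\eta$ (they are integrals of smooth integrands depending on $u$), so $a_\eta^*$ is differentiable there. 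Differentiating the identity $H_\eta(u)\,a_\eta^*(u)=b_\eta(u)$ (equivalently, applying the implicit function theorem to $\nabla_a L_\eta$) yields
\[
\frac{\partial a_\eta^*}{\partial u_k}(u) = H_\eta(u)^{-1}\Big(\frac{\partial b_\eta}{\partial u_k}(u) - \frac{\partial H_\eta}{\partial u_k}(u)\,a_\eta^*(u)\Big)\,.
\]

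Next I would control the factors. Proposition~\ref{prop:unicity-minimizer} gives $\|H_\eta(u)^{-1}\|\leq 8/\Delta(u)$ directly. Differentiating the integral expressions for $H_\eta$ and $b_\eta$ and collecting terms (as in the computation leading to \eqref{eq:def-gradient-u}), the $k$-th column of the matrix in parentheses has $j$-th entry equal, up to sign, to
\[
a^*_{\eta,k}\int_0^1 \sigma'_\eta(x-u_k)\sigma_\eta(x-u_j)\,\diff x \;+\; \delta_{jk}\int_0^1\sigma'_\eta(x-u_k)\big(f_\eta(x;a_\eta^*,u)-f^*(x)\big)\,\diff x\,,
\]
where the first group assembles into $A\,\mathrm{diag}(a_\eta^*)$ with $A_{jk}=\int_0^1\sigma'_\eta(x-u_k)\sigma_\eta(x-u_j)\,\diff x$, and the second group is a diagonal matrix whose entries are the $\sigma'_\eta$-weighted residuals.

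It then remains to bound the operator norms of these two pieces. Since $\sigma$ is increasing, $\sigma'_\eta\geq 0$ with $\int_0^1\sigma'_\eta(x-u_k)\,\diff x\leq 1$, and $\sigma_\eta\in[0,1]$, so every entry $A_{jk}\in[0,1]$; hence $\|A\|\leq\|A\|_{\mathrm{F}}\leq\sqrt{(m+1)m}\leq m+1$ and $\|A\,\mathrm{diag}(a_\eta^*)\|\leq(m+1)\|a_\eta^*\|$. For the diagonal piece, combining $\int_0^1\sigma'_\eta(x-u_k)\,\diff x\leq 1$ with $\|f_\eta(\cdot;a_\eta^*,u)\|_\infty\leq\|a_\eta^*\|_1\leq\sqrt{m+1}\,\|a_\eta^*\|$ and $\|f^*\|_\infty\leq M$ gives an entrywise bound $\sqrt{m+1}\,\|a_\eta^*\|+M$, hence the same bound on its operator norm. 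Using $\sqrt{m+1}\leq m+1$, this yields
\[
\Big\|\frac{\partial a_\eta^*}{\partial u}\Big\|\leq\frac{8}{\Delta(u)}\Big((m+1)\|a_\eta^*\|+\sqrt{m+1}\,\|a_\eta^*\|+M\Big)\leq\frac{8}{\Delta(u)}\big(2(m+1)\|a_\eta^*\|+M\big)\,,
\]
which is the claim.

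The differentiation step is routine; the main obstacle is the operator-norm estimate of $\partial_u(H_\eta a_\eta^*)$, where one must avoid losing extra factors of $m$. The key structural observations are that the mixing matrix $A$ has entries in $[0,1]$ (so its Frobenius, and thus operator, norm is at most $m+1$) and that $\|f_\eta(\cdot;a_\eta^*,u)\|_\infty$ should be controlled through $\|a_\eta^*\|_1\leq\sqrt{m+1}\,\|a_\eta^*\|$ rather than a cruder bound. A secondary point to verify is that for $u\in\mathcal{U}_\eta$ the support $[u_k-\eta/2,u_k+\eta/2]$ of $\sigma'_\eta(\cdot-u_k)$ lies inside $[0,1]$, so that the normalization $\int_0^1\sigma'_\eta(x-u_k)\,\diff x=1$ holds and the integrals stay clean.
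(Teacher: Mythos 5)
Your proof is correct and follows essentially the same route as the paper: both differentiate the identity $H_\eta(u)\,a_\eta^*(u)=b_\eta(u)$, invoke Proposition~\ref{prop:unicity-minimizer} to get $\|H_\eta(u)^{-1}\|\leqslant 8/\Delta(u)$, and then bound the remaining factor entrywise in operator/Frobenius norm. The only difference is bookkeeping: the paper bounds $\frac{\partial H_\eta}{\partial u_k}(u)\,a_\eta^*(u)$ (using the explicit form of $H_\eta$ from Lemma~\ref{lemma:h-equal-h0}) and $\frac{\partial b_\eta}{\partial u}(u)$ separately, whereas you regroup the diagonal contributions into the single residual integral $\int_0^1\sigma'_\eta(x-u_k)\big(f_\eta(x;a_\eta^*,u)-f^*(x)\big)\,\diff x$; the resulting estimates coincide.
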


\begin{proof}
By Proposition \ref{prop:unicity-minimizer} (whose proof does not rely on this lemma), for $u \in \mathcal{U}_\eta$, $L_\eta(\cdot,u)$ has a unique minimizer $a_\eta^*(u)$, which is equal to $H_\eta(u)^{-1} b_\eta(u)$ by \eqref{eq:def-gradient-a}. Furthermore, $H_\eta$ and $b_\eta$ are differentiable with respect to $u$, hence $a_\eta^*$ is also differentiable with respect to $u$, and we have 
\[
\frac{\partial a_\eta^*(u)}{\partial u_k} = - H_\eta(u)^{-1} \frac{\partial H_\eta}{\partial u_k}(u) a_\eta^*(u) + H_\eta(u)^{-1} \frac{\partial b_\eta}{\partial u_k}(u) \, .
\]
Denote $w_k(u) := \frac{\partial H_\eta}{\partial u_k}(u) a_\eta^*(u)$ and $W(u)$ the $(m+1)\times (m+1)$ matrix formed by stacking column-wise the vectors $(w_k(u))_{0 \leqslant k \leqslant m}$. Then 
\[
\frac{\partial a_\eta^*(u)}{\partial u} = - H_\eta(u)^{-1} W(u) + H_\eta(u)^{-1} \frac{\partial b_\eta}{\partial u}(u) \, .
\]
We now estimate the Frobenius norm of the matrix $W(u)$. 
By Lemma \ref{lemma:h-equal-h0}, for $u \in \cU_\eta$, $H_\eta(u) = H_0(u) + D_\eta$. Take $i,j \in \{0, \dots, m\}$, then
\[
H_{\eta,ij}(u) = H_{0,ij}(u) + D_{\eta,ij} = \int_0^1 \sigma_0(x-u_i)\sigma_0(x-u_j)dx + D_{\eta,ij} = 1 - \max(u_i, u_j, 0) + D_{\eta,ij} \, .
\]
Hence $\frac{\partial H_{\eta,ij}}{\partial u_k} = 0$ if $i,j \neq k$. Further, if $i = k$ and $j \neq k$, 
\begin{align*}
    \Big \vert \frac{\partial H_{\eta,ij}}{\partial u_k}(u) \Big\vert = \Big\vert \frac{\partial}{\partial u_i} (1 - \max(u_i, u_j)) \Big\vert  \leqslant 1 \, .
\end{align*}
Of course, the bound $ \vert \frac{\partial H_{\eta,ij}}{\partial u_k}(u)\vert \leqslant 1$ also holds when $j=k$ and $i \neq j$. Finally, a similar bound shows that $ \vert \frac{\partial H_{\eta,ij}}{\partial u_k}(u)\vert \leqslant 2$ when $i=j=k$.

As a consequence, for $k,i \in \{0, \dots, m\}$, 
\begin{align*}
    \left\vert w_{k,i}(u) \right\vert \leqslant \sum_{j=0}^m \Big\vert \frac{\partial H_{\eta,ij}}{\partial u_k}(u) \Big\vert \big\vert a^*_{\eta,j}(u) \big\vert \leq
    \begin{cases}
        \vert a^*_{\eta,k}(u) \vert &\text{if }i \neq k \, , \\
        \vert a^*_{\eta,k}(u) \vert  + \Vert a^*_{\eta}(u) \Vert_1 &\text{if }i = k \, .
    \end{cases}
\end{align*}
Thus
\begin{align*}
    \left\Vert W(u) \right\Vert_{\text{F}} &= \Big(\sum_{i = 0}^m \sum_{k=0}^m \left\vert w_{k,i}(u) \right\vert^2 \Big)^{1/2} \leqslant \bigg(\sum_{i = 0}^m \Big(\sum_{k=0}^m \left\vert w_{k,i}(u) \right\vert\Big)^2 \bigg)^{1/2} \\ 
    &\leqslant \Big(\sum_{i = 0}^m \left(2 \Vert a^*_{\eta}(u) \Vert_1 \right)^2 \Big)^{1/2} = 2 \sqrt{m+1} \Vert a^*_{\eta}(u) \Vert_1 \, .
\end{align*}
With a reasoning similar to the above, note that $\frac{\partial b_\eta}{\partial u}(u)$ is a diagonal matrix with diagonal entries in $[-M,M]$. Finally, putting these elements together, using Proposition \ref{prop:unicity-minimizer} and that $\Vert W(u) \Vert \leqslant \Vert W(u) \Vert_{\text{F}}$, we obtain 
\[
\Big\|\frac{\partial a_\eta^*(u)}{\partial u}\Big\| \leqslant \|H_\eta(u)^{-1}\| \|W(u)\|_{\text{F}} + \|H_\eta(u)^{-1}\| \Big\|\frac{\partial b_\eta(u)}{\partial u}\Big\| \leqslant \frac{8}{\Delta(u)} \Big(2 \sqrt{m+1} \|a_\eta^*(u)\|_1 + M\Big) \, .
\]
\end{proof}

The following lemma gives exact formulae for the derivative of the loss $L_\eta$ with respect to the positions of the neurons, evaluated for $a=a_0^*(u)$, that is the best piecewise constant approximation of $f^*$ with subdivision $\{u_1, \dots, u_m\}$. Note that the formulae are the same as in Section \ref{sec:sketch}, but the derivation is slightly more intricate since we consider here the loss $L_\eta$ and not $L_0$.

\begin{lemma}   \label{lemma:description-gradient}
Take $\eta > 0$ and $u \in \mathcal{U}_\eta$ such that there are at least two neurons on each piece $[v_i, v_{i+1}]$ of~$f^*$. Then, if $u_j$ does not flank a discontinuity of $f^*$,
\[
\frac{\partial L_\eta}{\partial u_j}(a_0^*(u), u) = 0.
\]
Furthermore, for a discontinuity $v_i$, denote $u_i^{\rm{L}}$ is the closest neuron to its left and $u_i^{\rm{R}}$ the closest neuron to its right. If $v_i - u_i^{\rm{L}} \geqslant \frac{\eta}{2}$ and $u_i^{\rm{R}} - v_i \geqslant \frac{\eta}{2}$, then
\begin{align*}
\frac{\partial L_\eta}{\partial u_i^{\rm{L}}}(a_0^*(u), u) &= - \frac{1}{2} \frac{(u_i^{\rm{R}} - v_i)^2}{(u_i^{\rm{R}} - u_i^{\rm{L}})^2} (f_i^* - f_{i-1}^*)^2 \, , \\
\frac{\partial L_\eta}{\partial u_i^{\rm{R}}}(a_0^*(u), u) &= \frac{1}{2} \frac{(v_i - u_i^{\rm{L}})^2}{(u_i^{\rm{R}} - u_i^{\rm{L}})^2} (f_i^* - f_{i-1}^*)^2 \, .   
\end{align*}
\end{lemma}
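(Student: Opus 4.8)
The plan is to start from the exact gradient formula \eqref{eq:def-gradient-u},
\[
\frac{\partial L_\eta}{\partial u_j}(a,u) = - a_j \int_0^1 \sigma'_\eta(x-u_j)\Big(\textstyle\sum_{k=0}^m a_k \sigma_\eta(x-u_k) - f^*(x)\Big)\diff x \, ,
\]
and to exploit two structural facts. First, since $\sigma$ is constant outside $[-1/2,1/2]$, the derivative $\sigma'_\eta(x-u_j)$ is supported on the window $[u_j - \eta/2, u_j + \eta/2]$, so the integral localizes to an interval of width $\eta$ centered at $u_j$. Second, I would read off the coordinates of $a_0^*(u)$ from the explicit best piecewise-constant approximation described in Section~\ref{sec:sketch}: the coefficient $a^*_{0,j}(u)$ is precisely the jump of that approximation at $u_j$. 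Subtracting consecutive interval values then yields $a^*_{0,j}(u) = 0$ when $u_j$ is interior to a piece, while for the two neurons flanking $v_i$ one gets $a^*_{0,i^{\mathrm L}}(u) = \tfrac{u_i^{\mathrm R} - v_i}{u_i^{\mathrm R} - u_i^{\mathrm L}}(f_i^* - f_{i-1}^*)$ and $a^*_{0,i^{\mathrm R}}(u) = \tfrac{v_i - u_i^{\mathrm L}}{u_i^{\mathrm R} - u_i^{\mathrm L}}(f_i^* - f_{i-1}^*)$.

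The first claim is then immediate: if $u_j$ does not flank a discontinuity then $a^*_{0,j}(u) = 0$, and since $a_j$ appears as a multiplicative prefactor in the formula above, $\frac{\partial L_\eta}{\partial u_j}(a_0^*(u),u) = 0$ irrespective of the value of the integral.

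For the second claim, fix $v_i$ and take $j = i^{\mathrm L}$. On the window $[u_i^{\mathrm L} - \eta/2, u_i^{\mathrm L} + \eta/2]$ I would use $u \in \mathcal{U}_\eta$, that is $\Delta(u) > 2\eta$, to show that every other feature is saturated: a neuron left of $u_i^{\mathrm L}$ is at distance more than $2\eta$, so $\sigma_\eta(x-u_k) = 1$ there, and symmetrically $\sigma_\eta(x-u_k)=0$ for neurons to the right (the bias term $u_0$ counting as a left neuron). Hence on this window $f_\eta(x;a_0^*(u),u) = f_{i-1}^* + a^*_{0,i^{\mathrm L}}(u)\,\sigma_\eta(x-u_i^{\mathrm L})$, where $f_{i-1}^*$ is the approximation value just left of $u_i^{\mathrm L}$. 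Since (see the final paragraph) $f^* \equiv f_{i-1}^*$ on the whole window, we get $f_\eta - f^* = a^*_{0,i^{\mathrm L}}(u)\,\sigma_\eta(x-u_i^{\mathrm L})$, and substituting $y = x-u_i^{\mathrm L}$ reduces the gradient to
\[
\frac{\partial L_\eta}{\partial u_i^{\mathrm L}}(a_0^*(u),u) = -\big(a^*_{0,i^{\mathrm L}}(u)\big)^2 \int_{-\eta/2}^{\eta/2}\sigma'_\eta(y)\sigma_\eta(y)\,\diff y = -\tfrac12\big(a^*_{0,i^{\mathrm L}}(u)\big)^2 \, ,
\]
using $\int_{-\eta/2}^{\eta/2}\sigma'_\eta\sigma_\eta = [\tfrac12\sigma_\eta^2]_{-\eta/2}^{\eta/2} = \tfrac12$; inserting the explicit $a^*_{0,i^{\mathrm L}}(u)$ gives the stated expression. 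The case $u_i^{\mathrm R}$ is symmetric: there the left value is $f_i^* - a^*_{0,i^{\mathrm R}}(u)$ and $f^* \equiv f_i^*$ on the window, so $f_\eta - f^* = a^*_{0,i^{\mathrm R}}(u)\,(\sigma_\eta(y)-1)$, and with $\int_{-\eta/2}^{\eta/2}\sigma'_\eta(\sigma_\eta-1) = -\tfrac12$ one obtains the $+\tfrac12(\cdots)$ formula.

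The step that requires the most care is verifying that the integration window lies entirely inside a single piece of $f^*$, so that $f^*$ is genuinely constant there. The hypothesis $v_i - u_i^{\mathrm L} \geq \eta/2$ keeps the window left of $v_i$, but I must also keep the neighboring discontinuity $v_{i-1}$ out on the left. For this I would combine the two-neurons-per-piece assumption, which provides a neuron $u_{i-1}^{\mathrm R} \in (v_{i-1}, v_i)$ strictly left of $u_i^{\mathrm L}$, with $\Delta(u) > 2\eta$, yielding $u_i^{\mathrm L} > u_{i-1}^{\mathrm R} + 2\eta > v_{i-1} + 2\eta$ and hence $u_i^{\mathrm L} - \eta/2 > v_{i-1}$. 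The same bookkeeping simultaneously justifies the feature-saturation claim, and the symmetric estimate handles the window around $u_i^{\mathrm R}$.
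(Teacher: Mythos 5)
Your proposal is correct and follows essentially the same route as the paper's proof: reading off the coefficients of $a_0^*(u)$ from the explicit piecewise-constant optimum of Section \ref{sec:sketch}, localizing the gradient integral to the width-$\eta$ window where $\sigma'_\eta$ is supported, using $\Delta(u) > 2\eta$ together with the two-neurons-per-piece assumption to saturate all other features and keep the window inside a single piece of $f^*$, and evaluating $\int \sigma'_\eta \sigma_\eta = \tfrac12$. The only cosmetic difference is that you spell out the $u_i^{\rm R}$ case via the $(\sigma_\eta - 1)$ antiderivative, where the paper simply declares it ``similar.''
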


\begin{proof}
In this proof, let us denote for simplicity $a = a_0^*(u)$. At the condition that there is at least two neurons on each piece of $f^*$, Section \ref{sec:sketch} gives the optimal approximation $f_0(x;a,u)$ of $f^*$ that is piecewise constant with respect to the subdivision $\{u_1, \dots, u_m\}$. As a consequence, we easily get the value of $a$. Namely, if $u_j$ does not flank a discontinuity of $f^*$, the value of $f_0(x;a,u)$ is locally constant around $u_j$, thus $a_j = 0$. Plugging into \eqref{eq:def-gradient-u}, we obtain
\[
\frac{\partial L_\eta}{\partial u_j}(a, u) = 0 \, .
\]
Further, for a discontinuity $v_i$, denote respectively $a_i^{\rm{L}}$ and $a_i^{\rm{R}}$ the coefficients associated to $u_i^{\rm{L}}$ and $u_i^{\rm{R}}$. At $u_i^{\rm{L}}$, the value of $f_0(x;a,u)$ jumps from $f_{i-1}^*$ to 
$\frac{v_i - u_i^{\rm{L}} }{u_i^{\rm{R}} - u_i^{\rm{L}}} f_{i-1}^* + \frac{u_i^{\rm{R}} - v_i }{u_i^{\rm{R}} - u_i^{\rm{L}}} f_{i}^*$, thus 
\[
a_i^{\rm{L}} = \frac{v_i - u_i^{\rm{L}} }{u_i^{\rm{R}} - u_i^{\rm{L}}} f_{i-1}^* + \frac{u_i^{\rm{R}} - v_i }{u_i^{\rm{R}} - u_i^{\rm{L}}} f_{i}^* - f_{i-1}^* = \frac{u_i^{\rm{R}} - v_i}{u_i^{\rm{R}} - u_i^{\rm{L}}} (f_i^* - f_{i-1}^*) \, .
\]
Similarly, we have
\[
a_i^{\rm{R}} = \frac{v_i - u_i^{\rm{L}}}{u_i^{\rm{R}} - u_i^{\rm{L}}} (f_i^* - f_{i-1}^*) \, .
\]
We now compute, using \eqref{eq:def-gradient-u},
\begin{align*}    
\frac{\partial L_\eta}{\partial u_i^{\rm{L}}}(a,u) 
&= - a_i^{\rm{L}} \int_0^1 \sigma'_\eta(x-u_i^{\rm{L}}) \left( f_\eta(x;a,u) - f^*(x) \right) \diff x \\
&= - a_i^{\rm{L}} \int_{u_i^{\rm{L}}-\eta/2}^{u_i^{\rm{L}}+\eta/2} \sigma'_\eta(x-u_i^{\rm{L}}) \left( f_\eta(x;a,u) - f^*(x) \right) \diff x \, .
\end{align*}
Using that $\Delta(u) > 2\eta$ and that there are at least two neurons on each piece of $f^*$, we have that $u_i^{\rm{L}} - v_{i-1} \geqslant 2\eta$. Since, in addition, by assumption,  $v_i - u_i^{\rm{L}} \geqslant \frac{\eta}{2}$, we get that for $x \in \left[u_i^{\rm{L}} - \frac{\eta}{2}, u_i^{\rm{L}} + \frac{\eta}{2}\right]$, $f^*(x) = f^*_{i-1}$. Moreover, using again $\Delta(u) \geqslant 2\eta$ that $\sigma_\eta$ is equal to $\sigma_0$ on $(-\infty, -\eta/2]$ and $[\eta/2, \infty)$, we have for $x \in \left[u_i^{\rm{L}} - \frac{\eta}{2}, u_i^{\rm{L}} + \frac{\eta}{2}\right]$, 
\begin{align*}
    f_\eta(x;a,u) &= \sum_{k=0}^m a_k\sigma_\eta(x-u_k) = f_0\left(u_i^{\rm{L}}-\frac{\eta}{2};a,u\right) + a_i^{\rm{L}}\sigma_\eta(x-u_i^{\rm{L}}) = f^*_{i-1} + a_i^{\rm{L}} \sigma_\eta(x-u_i^{\rm{L}}) \, .
\end{align*}
Thus we obtain 
\begin{align*}    
\frac{\partial L_\eta}{\partial u_i^{\rm{L}}}(a,u) 
&= - a_i^{\rm{L}} \int_{u_i^{\rm{L}}-\eta/2}^{u_i^{\rm{L}}+\eta/2} \sigma'_\eta(x-u_i^{\rm{L}}) a_i^{\rm{L}} \sigma_\eta(x-u_i^{\rm{L}}) \diff x \\
&= - \frac{(a_i^{\rm{L}})^2}{2}\Big(\sigma_\eta\big(\frac{\eta}{2}\big)^2-\sigma_\eta\big(-\frac{\eta}{2}\big)^2\Big) \\
&= - \frac{(a_i^{\rm{L}})^2}{2} \\
&= - \frac{1}{2}\frac{(u_i^{\rm{R}} - v_i)^2}{(u_i^{\rm{R}} - u_i^{\rm{L}})^2} (f_i^* - f_{i-1}^*)^2 \, .
\end{align*}
The computation of $\frac{\partial L_\eta}{\partial u_i^{\rm{R}}}(a,u)$ is similar.

\end{proof}

\begin{lemma}
\label{lem:bound-M}
    Consider $\eta \geq 0$ and $u\in \cU_\eta$ such that there are at least two neurons on each piece $[v_i, v_{i+1}]$ of $f^*$. Then, for all $x \in [0,1]$, $\left\vert f_\eta(x;a^*_0(u),u) \right\vert \leq M$. 
\end{lemma}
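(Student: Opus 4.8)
The plan is to show that $f_\eta(x; a_0^*(u), u) = f_0(x; a_0^*(u), u)$ for $x$ away from the transition zones of width $\eta$ around each neuron, and then bound $f_0(x; a_0^*(u), u)$ directly using the explicit formula from Section \ref{sec:sketch}. Recall that $a_0^*(u)$ is defined as the argmin of $L_0(\cdot, u)$, i.e.\ the coefficients of the \emph{best piecewise-constant} approximation of $f^*$ with subdivision $\{u_1, \dots, u_m\}$, and that Section \ref{sec:sketch} gives $f_0(x; a_0^*(u), u)$ in closed form on each region. The key observation is that on each interval $(u_i^{\rm{L}}, u_i^{\rm{R}})$ flanking a discontinuity $v_i$, the value is a convex combination $\frac{v_i - u_i^{\rm{L}}}{u_i^{\rm{R}} - u_i^{\rm{L}}} f_{i-1}^* + \frac{u_i^{\rm{R}} - v_i}{u_i^{\rm{R}} - u_i^{\rm{L}}} f_i^*$, and on each interval $(u_{i-1}^{\rm{R}}, u_i^{\rm{L}})$ the value is exactly $f_{i-1}^*$.

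First I would bound $f_0(x; a_0^*(u), u)$. Since $|f_i^*| \leq M$ for all $i$ by the definition of $\mathcal{F}_{n, \Delta v, \Delta f, M}$ (Definition \ref{def:piecewise-constant}), and since every value of $f_0(x; a_0^*(u), u)$ is either equal to some $f_{i-1}^*$ or a convex combination of two consecutive values $f_{i-1}^*$ and $f_i^*$, the triangle inequality immediately gives $|f_0(x; a_0^*(u), u)| \leq M$ for all $x \in [0,1]$. This handles the $\eta = 0$ case and the bulk of the $\eta > 0$ case.

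The remaining work is to pass from $f_0$ to $f_\eta$. Here I would use the same localization argument as in the proof of Lemma \ref{lemma:description-gradient}: because $u \in \cU_\eta$ guarantees $\Delta(u) > 2\eta$, the transition zones $[u_j - \eta/2, u_j + \eta/2]$ are disjoint, so for any fixed $x$ at most one neuron is in its transition zone. Writing $f_\eta(x; a_0^*(u), u) = \sum_{k=0}^m a_k^*\sigma_\eta(x - u_k)$, on the flat regions between consecutive neurons (outside all transition zones) we have $\sigma_\eta(x - u_k) = \sigma_0(x - u_k) \in \{0, 1\}$, so $f_\eta = f_0$ there and the bound follows. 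On a transition zone around a single neuron $u_j$, the value $f_\eta(x; a_0^*(u), u)$ equals the value on one flat side plus $a_j^* \sigma_\eta(x - u_j)$ with $\sigma_\eta \in [0,1]$, so it is a convex combination of the values of $f_0$ on the two adjacent flat regions; since both of those are bounded by $M$, so is $f_\eta(x; a_0^*(u), u)$.

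The main obstacle is simply being careful about the case distinctions in the localization step — making sure the transition zones do not overlap (which is exactly what $u \in \cU_\eta$ buys us) and correctly identifying $f_\eta(x; a_0^*(u), u)$ as lying between the two adjacent flat-region values of $f_0$. This is the same mechanism already exploited in Lemma \ref{lemma:description-gradient}, so I would reuse that computation: on $[u_j - \eta/2, u_j + \eta/2]$ we have $f_\eta(x; a, u) = f_0(u_j - \eta/2; a, u) + a_j \sigma_\eta(x - u_j)$, which interpolates between $f_0(u_j - \eta/2; a, u)$ and $f_0(u_j + \eta/2; a, u)$. Both endpoints are bounded by $M$ by the first step, and $\sigma_\eta \in [0,1]$, so the intermediate value is bounded by $M$ as well. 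No delicate estimate is needed beyond the at-least-two-neurons-per-piece hypothesis, which ensures the explicit form of $f_0$ from Section \ref{sec:sketch} applies.
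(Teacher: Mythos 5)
Your proof is correct, and it shares its first half with the paper's proof: both start by bounding $f_0(x;a_0^*(u),u)$ by $M$ via the explicit formulas of Section \ref{sec:sketch} (every value is a flat value $f_{i-1}^*$ or a convex combination of consecutive values, all bounded by $M$ thanks to Definition \ref{def:piecewise-constant}). Where you diverge is the transfer from $f_0$ to $f_\eta$. The paper does this by summation by parts: writing $A_k^*(u) = \sum_{j=0}^k a_{0,j}^*(u)$, it rewrites $f_\eta(x;a_0^*(u),u) = \sum_{k=0}^{m-1} A_k^*(u)\left(\sigma_\eta(x-u_k)-\sigma_\eta(x-u_{k+1})\right) + A_m^*(u)\sigma_\eta(x-u_m)$, notes that $A_k^*(u) = \lim_{y \to u_k+} f_0(y;a_0^*(u),u)$ so $|A_k^*(u)| \leq M$ by the $\eta=0$ case, and concludes since the weights are nonnegative (monotonicity of $\sigma_\eta$ plus the ordering of the $u_k$) and sum to $\sigma_\eta(x-u_0) \leq 1$. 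You instead localize: the condition $\Delta(u) > 2\eta$ makes the transition zones $[u_j-\eta/2, u_j+\eta/2]$ disjoint, $f_\eta = f_0$ outside them, and inside a zone $f_\eta(x) = f_0(u_j-\eta/2) + a_{0,j}^*(u)\sigma_\eta(x-u_j)$ interpolates between the two adjacent flat values of $f_0$, reusing the identity from the proof of Lemma \ref{lemma:description-gradient}. Both arguments are sound; the trade-off is that the paper's Abel summation is a single global identity needing no case analysis and, notably, no use of $\Delta(u) > 2\eta$ in the transfer step (only the ordering of the neurons), whereas your version makes explicit where the hypothesis $u \in \cU_\eta$ enters and yields the slightly sharper local statement that $f_\eta$ lies pointwise between the two neighboring flat values of $f_0$.
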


\begin{proof}
    In the case where $\eta = 0$, the result easily follows from the expressions for $f_0(x; a_0^*(u),u)$ provided in Section \ref{sec:sketch}. We now assume $\eta > 0$. 

    Denote $A_k^*(u) = \sum_{j=0}^k a_{0,j}^*(u)$ (with the convention $A_{-1}^*(u) = 0$). Recall the convention $u_0 = -\eta/2$. We compute
\begin{align*}
     f_\eta(x;a_0^*(u),u)  &=  \sum_{k=0}^m a_{0,k}^*(u) \sigma_\eta(x-u_k) \\
    &= \sum_{k=0}^m \left(A_k^*(u) - A_{k-1}^*(u)\right) \sigma_\eta(x-u_k) \\
    &=  \sum_{k=0}^{m-1} A_k^*(u) \left( \sigma_\eta(x-u_k) - \sigma_\eta(x-u_{k+1})\right) + A^*_m(u) \sigma_\eta(x-u_m) 
\end{align*}
Note that $A_k^*(u) = \lim_{x \to u_k+}f_0(x; a_0^*(u),u)$, and thus, from the case $\eta = 0$, we have $|A_k^*(u)| \leqslant M$. Moreover, $\sigma_\eta$ is increasing and the $u_k$ are in increasing order. We thus get
\begin{align*}
    \left\vert f_\eta(x;a_0^*(u),u) \right\vert &\leqslant M\Big(\sum_{k=0}^{m-1}  \left( \sigma_\eta(x-u_k) - \sigma_\eta(x-u_{k+1})\right) +\sigma_\eta(x-u_m)  \Big) \\
    &= M \sigma_\eta(x-u_0)  \leqslant M \, .
\end{align*}
\end{proof}

\begin{lemma}   \label{lemma:upper-bound-gradient}
Consider $\eta > 0$ and $u\in \cU_\eta$ such that there are at least two neurons on each piece $[v_i, v_{i+1}]$ of $f^*$. Then, for $j \in \{0, \dots, m\}$,
\[
|a_{0, j}^*(u)| \leqslant 2M
\]
and, for any $a \in \R^{m+1}$,
\[
\Big\vert\frac{\partial L_\eta}{\partial u_j}(a, u) - \frac{\partial L_\eta}{\partial u_j}(a_0^*(u), u)\Big\vert \leqslant 2 M (\sqrt{m+1} + 1) \|a-a_0^*(u)\| + \sqrt{m+1} \|a-a_0^*(u)\|^2 \, .
\]
\end{lemma}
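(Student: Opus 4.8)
\textbf{Proof plan for Lemma \ref{lemma:upper-bound-gradient}.}

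The plan is to prove the two claims in sequence, since the second will use the first. For the bound $|a_{0,j}^*(u)| \leqslant 2M$, I would rely on the explicit description of $a_0^*(u)$ recovered in the proof of Lemma \ref{lemma:description-gradient}. There, the only nonzero coefficients are those flanking a discontinuity, with values $a_i^{\rm{L}} = \frac{u_i^{\rm{R}} - v_i}{u_i^{\rm{R}} - u_i^{\rm{L}}}(f_i^* - f_{i-1}^*)$ and $a_i^{\rm{R}} = \frac{v_i - u_i^{\rm{L}}}{u_i^{\rm{R}} - u_i^{\rm{L}}}(f_i^* - f_{i-1}^*)$. Since the ratios lie in $[0,1]$ and $|f_i^* - f_{i-1}^*| \leqslant |f_i^*| + |f_{i-1}^*| \leqslant 2M$ (using $|f_i^*| \leqslant M$ from Definition \ref{def:piecewise-constant}), each coefficient is bounded by $2M$ in absolute value, while the non-flanking coefficients vanish. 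The bias coefficient $a_{0,0}^*(u)$ equals $f_0^*$ (the leftmost value), which is also bounded by $M \leqslant 2M$.

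For the Lipschitz-type estimate, I would start from the exact formula \eqref{eq:def-gradient-u} for $\frac{\partial L_\eta}{\partial u_j}$ and subtract the two instances, one evaluated at a general $a$ and one at $a_0^*(u)$. Writing $\delta = a - a_0^*(u)$, the difference splits because the integrand is bilinear in the coefficient vector inside the parentheses and carries an explicit prefactor $a_j$ (resp. $a_{0,j}^*(u)$) outside. Concretely, I expect the difference to decompose into a term where the outer prefactor changes (proportional to $\delta_j$, multiplied by the residual evaluated at $a_0^*(u)$) and a term where the inner sum changes (proportional to $a_j$ times $\sum_k \delta_k \sigma_\eta(x-u_k)$). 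The first type of term is controlled using $\|f_\eta(\cdot; a_0^*(u), u)\|_\infty \leqslant M$ from Lemma \ref{lem:bound-M} together with $\|f^*\|_\infty \leqslant M$, so the residual is bounded by $2M$; combined with $\int_0^1 |\sigma_\eta'(x - u_j)| \diff x = 1$ (since $\sigma$ is increasing from $0$ to $1$), this yields a contribution of order $M|\delta_j| \leqslant M\|\delta\|$. The cross terms involving $a_j = a_{0,j}^*(u) + \delta_j$ and $\sum_k \delta_k \sigma_\eta(x-u_k)$ give both a linear-in-$\|\delta\|$ piece (from $a_{0,j}^*(u)$, bounded using the first part of the lemma and Cauchy--Schwarz, $|\sum_k \delta_k \sigma_\eta(x-u_k)| \leqslant \sqrt{m+1}\|\delta\|$) and a quadratic-in-$\|\delta\|$ piece (from $\delta_j$ times the same sum, bounded by $\sqrt{m+1}\|\delta\|^2$).

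The main obstacle, and the step requiring care, is the bookkeeping of the bilinear decomposition so that the constants line up exactly as $2M(\sqrt{m+1}+1)$ for the linear term and $\sqrt{m+1}$ for the quadratic term. I would organize it by writing $a_j R(a) - a_{0,j}^*(u) R(a_0^*(u))$, where $R(a) = \int_0^1 \sigma_\eta'(x - u_j)(f_\eta(x;a,u) - f^*(x))\diff x$, and then adding and subtracting $a_{0,j}^*(u) R(a)$ to separate the prefactor change from the residual change; the residual change $R(a) - R(a_0^*(u)) = \int_0^1 \sigma_\eta'(x-u_j) \sum_k \delta_k \sigma_\eta(x - u_k)\diff x$ is exactly linear in $\delta$. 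The only genuinely delicate point is ensuring the $\sqrt{m+1}$ factors arise cleanly from Cauchy--Schwarz applied to $|\sum_k \delta_k \sigma_\eta(x-u_k)| \leqslant \|\delta\| (\sum_k \sigma_\eta(x-u_k)^2)^{1/2} \leqslant \|\delta\|\sqrt{m+1}$, after which collecting the three contributions (one of order $2M\|\delta\|$ from the prefactor change, one of order $2M\sqrt{m+1}\|\delta\|$ from $a_{0,j}^*(u)$ times the residual change, and one of order $\sqrt{m+1}\|\delta\|^2$ from $\delta_j$ times the residual change) gives the claimed bound.
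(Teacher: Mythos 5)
Your proposal is correct and takes essentially the same route as the paper's proof: the first bound comes from the explicit formulae for $a_0^*(u)$ in the proof of Lemma \ref{lemma:description-gradient}, and the second from the same add-and-subtract decomposition of $a_j R(a) - a_{0,j}^*(u)R(a_0^*(u))$ into a prefactor-change term and a residual-change term, controlled via Lemma \ref{lem:bound-M}, $\int_0^1 \sigma_\eta'(x-u_j)\,\diff x \leqslant 1$, and a $\sqrt{m+1}$ factor (the paper obtains it through $\|\cdot\|_1 \leqslant \sqrt{m+1}\|\cdot\|$ rather than Cauchy--Schwarz on the sum, which is equivalent). The three contributions you collect match the paper's bound term for term.
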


\begin{proof}
    The first statement of the Lemma comes from the explicit formulae for $a_0^*(u)$ given in the proof of Lemma \ref{lemma:description-gradient}, namely each $a_{0,j}^*(u)$ is either zero or less in magnitude than the gap between two pieces of $f^*$ that is less than $2M$. 

By \eqref{eq:def-gradient-u}, we have
\begin{align*}
\Big|\frac{\partial L_\eta}{\partial u_j}(a, u) &- \frac{\partial L_\eta}{\partial u_j}(a_0^*(u), u)\Big| \\
&= \Bigg\vert a_j \int_0^1 \sigma'_\eta(x-u_j) \Big(f_\eta(x;a,u) - f^*(x) \Big) \diff x \\
&\qquad- a_{0,j}^*(u) \int_0^1 \sigma'_\eta(x-u_j) \Big(f_\eta(x;a_0^*(u),u) - f^*(x) \Big) \diff x \Bigg\vert 
 \\ &\leqslant |a_{j} - a_{0,j}^*(u)| \int_0^1 \sigma'_\eta(x-u_j) \left\vert f_\eta(x;a_0^*(u),u) - f^*(x) \right\vert \diff x  \\
&\qquad+ |a_j| \int_0^1 \sigma'_\eta(x-u_j) \left\vert f_\eta(x;a,u) - f_\eta(x;a_0^*(u),u) \right\vert  \diff x \, .
\end{align*}
We bound the two terms separately. For the first term, we use Lemma \ref{lem:bound-M}.
\begin{align*}
    \int_0^1 \sigma'_\eta(x-u_j) \left\vert f_\eta(x;a_0^*(u),u) - f^*(x) \right\vert &\leq \int_0^1 \sigma'_\eta(x-u_j) \left( \left\vert f_\eta(x;a_0^*(u),u) \right\vert + \left\vert f^*(x) \right\vert \right) \\
    &\leqslant 2M \int_0^1 \sigma'_\eta(x-u_j) \diff x \leqslant 2M \, . 
\end{align*}
We now continue with the second term. 
\begin{align*}
    \left\vert f_\eta(x;a,u) - f_\eta(x;a_0^*(u),u) \right\vert &= \Big\vert \sum_{k=0}^m (a_k - a_{0,k}^*(u)) \sigma_\eta(x-u_k) \Big\vert \leqslant \Vert a - a_{0}^*(u)\Vert_1 \, ,
\end{align*}
and thus 
\begin{align*}
    \int_0^1 \sigma'_\eta(x-u_j) \left\vert f_\eta(x;a,u) - f_\eta(x;a_0^*(u),u) \right\vert  \diff x &\leqslant \Vert a - a_{0}^*(u)\Vert_1  \int_0^1 \sigma'_\eta(x-u_j) \diff x \\
    &\leqslant \Vert a - a_{0}^*(u)\Vert_1\, .
\end{align*}
Returning to our initial upper bound, we obtain, using the first statement of the Lemma,
\begin{align*}
\Big|\frac{\partial L_\eta}{\partial u_j}(a, u) - \frac{\partial L_\eta}{\partial u_j}(a_0^*(u), u)\Big| 
&\leqslant 2M \|a-a_0^*(u)\| + (|a^*_{0,j}(u)| + |a_j - a_{0,j}^*(u)|) \|a - a_0^*(u)\|_1 \\
&\leqslant 2M \|a-a_0^*(u)\| + (2M + \Vert a - a_{0}^*(u)\Vert) \sqrt{m+1}\|a - a_0^*(u)\| \\
&= 2M (\sqrt{m+1} + 1) \|a-a_0^*(u)\| + \sqrt{m+1} \|a-a_0^*(u)\|^2 \, .
\end{align*}
\end{proof}

\begin{lemma}   \label{lemma:control-solution-fast-system}
For $\eta \geq 0$ and $u \in \cU_\eta$,
\[
\|a_\eta^*(u) - a_0^*(u)\| \leqslant \frac{16 M \sqrt{m+1} \eta}{\Delta (u)} \, .
\]
\end{lemma}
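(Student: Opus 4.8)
The plan is to reduce everything to a single resolvent-type perturbation identity for the least-squares minimizer and then to estimate each term with the lemmas already established. First I would dispose of the case $\eta = 0$, where the inequality is trivial since the left-hand side vanishes, and assume henceforth $\eta > 0$. For $u \in \cU_\eta$ we have $\Delta(u) > 2\eta > 0$, so the neurons are distinct and both quadratics $L_\eta(\cdot,u)$ and $L_0(\cdot,u)$ admit unique minimizers; by the stationarity equation \eqref{eq:def-gradient-a} these are $a_\eta^*(u) = H_\eta(u)^{-1} b_\eta(u)$ and $a_0^*(u) = H_0(u)^{-1} b_0(u)$. The central step is the algebraic decomposition
\[
a_\eta^*(u) - a_0^*(u) = H_\eta(u)^{-1}\big(b_\eta(u) - b_0(u)\big) + H_\eta(u)^{-1}\big(H_0(u) - H_\eta(u)\big) a_0^*(u) \, ,
\]
which follows by adding and subtracting $H_\eta(u)^{-1} b_0(u)$ and using the resolvent identity $H_\eta^{-1} - H_0^{-1} = H_\eta^{-1}(H_0 - H_\eta) H_0^{-1}$ together with $H_0^{-1} b_0 = a_0^*$. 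The virtue of writing the second term with $a_0^*$ (rather than $b_\eta$ and $H_0^{-1}$) is that it avoids a second factor of $\|H_0^{-1}\|$, which would produce an unwanted $\Delta(u)^{-2}$ and break the claimed linear dependence on $\Delta(u)^{-1}$.

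Next I would bound the two terms in operator/Euclidean norm. For the prefactor, Proposition \ref{prop:unicity-minimizer} gives $\|H_\eta(u)^{-1}\| \leqslant 8/\Delta(u)$. For the first term, Lemma \ref{lemma:upper-bound-b} yields $\|b_\eta(u) - b_0(u)\| \leqslant M \eta \sqrt{m+1}$, so it is at most $\tfrac{8}{\Delta(u)} M \eta \sqrt{m+1}$. For the second term, Lemma \ref{lemma:h-equal-h0} identifies $H_0(u) - H_\eta(u) = -D_\eta$ with $D_\eta$ diagonal and $\|D_\eta\| \leqslant \eta/2$, so the term is bounded by $\tfrac{8}{\Delta(u)} \cdot \tfrac{\eta}{2} \cdot \|a_0^*(u)\| = \tfrac{4\eta}{\Delta(u)} \|a_0^*(u)\|$. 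Summing and inserting the bound $\|a_0^*(u)\| \leqslant 2M\sqrt{m+1}$ gives exactly
\[
\|a_\eta^*(u) - a_0^*(u)\| \leqslant \frac{8 M \sqrt{m+1}\,\eta}{\Delta(u)} + \frac{4\eta}{\Delta(u)}\cdot 2M\sqrt{m+1} = \frac{16 M \sqrt{m+1}\,\eta}{\Delta(u)} \, ,
\]
matching the constant in the statement; this suggests the decomposition above is the intended route.

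The one genuinely nontrivial ingredient — and the step I expect to be the main obstacle — is the \emph{uniform} bound $\|a_0^*(u)\| \leqslant 2M\sqrt{m+1}$, i.e.\ $|a_{0,j}^*(u)| \leqslant 2M$, since the present lemma makes \emph{no} assumption that each piece of $f^*$ carries two neurons, whereas the clean coefficient bound of Lemma \ref{lemma:upper-bound-gradient} was derived under exactly that hypothesis. I would re-establish it directly and hypothesis-free by the projection interpretation: $f_0(\cdot;a_0^*(u),u)$ is the $L^2([0,1])$-orthogonal projection of $f^*$ onto the space of functions constant on the subdivision $\{u_1,\dots,u_m\}$, hence on each subinterval $(u_k,u_{k+1})$ it equals the average of $f^*$ there, whose modulus is at most $\|f^*\|_\infty \leqslant M$. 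Writing $A_k^* = \sum_{j\leqslant k} a_{0,j}^*(u)$ for this piecewise value gives $|A_k^*|\leqslant M$, and since $a_{0,k}^*(u) = A_k^* - A_{k-1}^*$ (with $A_{-1}^*=0$) we get $|a_{0,k}^*(u)| \leqslant 2M$ for every $k$, whence $\|a_0^*(u)\| \leqslant 2M\sqrt{m+1}$. With this in hand the computation above closes the proof.
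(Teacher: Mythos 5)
Your proof is correct, and its algebraic core is identical to the paper's: your resolvent manipulation produces exactly the paper's identity $a_\eta^*(u) - a_0^*(u) = H_\eta(u)^{-1}\big(b_\eta(u) - b_0(u) - D_\eta a_0^*(u)\big)$ (since $H_0 - H_\eta = -D_\eta$ by Lemma \ref{lemma:h-equal-h0}), after which both arguments invoke the same three estimates: $\|H_\eta(u)^{-1}\| \leqslant 8/\Delta(u)$ from Proposition \ref{prop:unicity-minimizer}, $\|b_\eta(u)-b_0(u)\| \leqslant M\eta\sqrt{m+1}$ from Lemma \ref{lemma:upper-bound-b}, and $\|a_0^*(u)\| \leqslant 2M\sqrt{m+1}$. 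Where you genuinely depart from the paper is in the last ingredient, and your instinct there is well founded: the paper simply writes that ``the result unfolds from Lemmas \ref{lemma:upper-bound-b} and \ref{lemma:upper-bound-gradient}'', but the coefficient bound $|a_{0,j}^*(u)| \leqslant 2M$ in Lemma \ref{lemma:upper-bound-gradient} is stated (and proved, via the explicit formulae of Lemma \ref{lemma:description-gradient}) under the hypothesis that every piece $[v_i,v_{i+1}]$ of $f^*$ contains at least two neurons --- a hypothesis absent from the statement of Lemma \ref{lemma:control-solution-fast-system}. Your projection argument (the value of $f_0(\cdot;a_0^*(u),u)$ on each cell $(u_k,u_{k+1})$ is the average of $f^*$ there, hence the partial sums $A_k^*$ satisfy $|A_k^*|\leqslant M$ and the increments satisfy $|a_{0,k}^*(u)|\leqslant 2M$) establishes the bound with no assumption on how the neurons are distributed among the pieces, so your proof actually covers the lemma exactly as stated, whereas the paper's proof, read literally, either needs the two-neurons-per-piece hypothesis added to the statement or needs precisely the hypothesis-free argument you supply. (In the paper this gap is harmless downstream, since every application of the lemma occurs at times when the two-neurons condition is enforced, but your version is the cleaner and more self-contained one.)
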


\begin{proof}
By \eqref{eq:def-gradient-a},
\[
H_\eta(u) a_\eta^*(u) = b_\eta(u)
\]
and by \eqref{eq:def-gradient-a} and by Lemma \ref{lemma:h-equal-h0},
\[
H_\eta(u) a_0^*(u) = H_0(u) a_0^*(u) + D_\eta a_0^*(u) = b_0(u) + D_\eta a_0^*(u).
\]
According to Proposition~\ref{prop:unicity-minimizer} (whose proof does not rely on this lemma), $H_\eta(u)$ is invertible with $\|H_\eta(u)^{-1}\| \leqslant 8 / \Delta(u)$. We thus have
\begin{align*}
\|a_\eta^*(u) - a_0^*(u)\| &= \|H_\eta(u)^{-1} (H_\eta(u) a_\eta^*(u) - H_\eta(u) a_0^*(u))\| \\
&\leq \frac{8}{\Delta(u)} \|b_\eta(u) - b_0(u) - D_\eta a_0^*(u)\| \\
&\leq \frac{8}{\Delta(u)} \big( \|b_\eta(u) - b_0(u)\| +  \|D_\eta a_0^*(u)\| \big) \\
&\leq \frac{8}{\Delta(u)} \big( \|b_\eta(u) - b_0(u)\| +  \frac{\eta}{2} \|a_0^*(u)\| \big)\, . 
\end{align*}
The result then unfolds from Lemmas \ref{lemma:upper-bound-b} and \ref{lemma:upper-bound-gradient}.
\end{proof}

\begin{lemma}   \label{lemma:loss-locally-a-lip}
Let $\eta > 0$, $u \in \R^m$ and $a,a' \in \R^{m+1}$. Then
\begin{align*}   
\|\nabla_u L_\eta (a, u)\| &\leqslant  \sqrt{m+1} \|a\|^2 + M \|a\| \, , \\
\|\nabla_a L_\eta(a,u)\| &\leqslant \sqrt{m+1} (\Vert a \Vert \sqrt{m+1} + M) \, .
\end{align*}
As a consequence of the second inequality, by the fundamental theorem of calculus for line integrals,
\[
|L_\eta(a, u) - L_\eta(a', u)| \leqslant \sqrt{m+1} \left(\max(\Vert a \Vert, \Vert a' \Vert) \sqrt{m+1} + M\right) \|a-a'\| \, .
\]
\end{lemma}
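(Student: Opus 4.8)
The plan is to prove the two gradient bounds directly from the explicit derivative formulas \eqref{eq:def-gradient-u} and \eqref{eq:def-gradient-a}, and then deduce the local Lipschitz estimate on $L_\eta$ by integrating the $a$-gradient along a straight segment.

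First, for the bound on $\nabla_u L_\eta$, I would start from \eqref{eq:def-gradient-u}, which reads $\frac{\partial L_\eta}{\partial u_j}(a,u) = - a_j \int_0^1 \sigma'_\eta(x-u_j)\big(\sum_{k=0}^m a_k \sigma_\eta(x-u_k) - f^*(x)\big)\diff x$. To bound the integrand I would use that $0 \leq \sigma_\eta \leq 1$, so that $\big|\sum_{k=0}^m a_k \sigma_\eta(x-u_k)\big| \leq \|a\|_1 \leq \sqrt{m+1}\,\|a\|$ by Cauchy--Schwarz (the vector $a$ has $m+1$ entries), together with $|f^*(x)| \leq M$ and the fact that $\int_0^1 \sigma'_\eta(x-u_j)\diff x \leq 1$ since $\sigma_\eta$ is nondecreasing with values in $[0,1]$. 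This gives $\big|\frac{\partial L_\eta}{\partial u_j}(a,u)\big| \leq |a_j|\,(\sqrt{m+1}\,\|a\| + M)$, and summing the squares over $j \in \{1,\dots,m\}$ yields the first inequality because $\big(\sum_{j=1}^m |a_j|^2\big)^{1/2} \leq \|a\|$.

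Next, for $\nabla_a L_\eta$, equation \eqref{eq:def-gradient-a} shows that $\nabla_a L_\eta(a,u) = H_\eta(u)\,a - b_\eta(u)$, hence $\|\nabla_a L_\eta(a,u)\| \leq \|H_\eta(u)\|\,\|a\| + \|b_\eta(u)\|$. Since every entry of the $(m+1)\times(m+1)$ matrix $H_\eta(u)$ is an integral of a product of quantities in $[0,1]$, it lies in $[0,1]$, so the operator norm is bounded by the Frobenius norm and therefore by $m+1$. Combined with $\|b_\eta(u)\| \leq M\sqrt{m+1}$ from Lemma \ref{lemma:upper-bound-b}, this gives $\|\nabla_a L_\eta(a,u)\| \leq (m+1)\|a\| + M\sqrt{m+1}$, which is the second inequality after factoring out $\sqrt{m+1}$.

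Finally, for the Lipschitz consequence, I would parametrize the segment by $a(s) = a' + s(a-a')$ for $s \in [0,1]$ and write $L_\eta(a,u) - L_\eta(a',u) = \int_0^1 \nabla_a L_\eta(a(s),u)^\top (a-a')\,\diff s$. By Cauchy--Schwarz and the second inequality, the integrand is at most $\sqrt{m+1}\,(\sqrt{m+1}\,\|a(s)\| + M)\,\|a-a'\|$, and since $\|a(s)\| \leq \max(\|a\|,\|a'\|)$ by convexity of the norm, integrating over $s$ produces the claimed estimate. None of these steps is a genuine obstacle; the only points requiring a little care are the $\ell^1$-to-$\ell^2$ conversion $\|a\|_1 \leq \sqrt{m+1}\,\|a\|$ and the operator-norm bound $\|H_\eta(u)\| \leq m+1$, both elementary.
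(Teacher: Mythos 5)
Your proposal is correct and follows essentially the same route as the paper: both derive the two gradient bounds directly from the explicit formulas \eqref{eq:def-gradient-u} and \eqref{eq:def-gradient-a} using $0 \leqslant \sigma_\eta \leqslant 1$, $\int_0^1 \sigma'_\eta(x-u_j)\,\diff x \leqslant 1$, $\|f^*\|_\infty \leqslant M$, and the conversion $\|a\|_1 \leqslant \sqrt{m+1}\,\|a\|$, then obtain the Lipschitz estimate by integrating $\nabla_a L_\eta$ along the segment. The only cosmetic difference is in the second bound, where you go through $\nabla_a L_\eta(a,u) = H_\eta(u)a - b_\eta(u)$ with $\|H_\eta(u)\| \leqslant m+1$ and Lemma \ref{lemma:upper-bound-b}, whereas the paper bounds each component $\big|\frac{\partial L_\eta}{\partial a_j}(a,u)\big| \leqslant \|a\|_1 + M$ directly from the integral; both yield exactly the same constant.
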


\begin{proof}
Recall that, for all $j \in \{1, \dots, m\}$, and for all $a, u \in \R^m$,
\begin{align*}    
\frac{\partial L_\eta}{\partial u_j}(a,u) 
&= - a_j \int_0^1 \sigma'_\eta(x-u_j) \Big( \sum_{k=0}^m a_k \sigma_\eta (x-u_k) - f^*(x) \Big) \diff x \, , \\
\frac{\partial L_\eta}{\partial a_j}(a,u) &= \int_0^1 \sigma_\eta(x-u_j) \Big( \sum_{k=0}^m a_k \sigma_\eta (x-u_k) - f^*(x) \Big) \diff x \, .
\end{align*}
From the first equality, we have 
\begin{align*}
\Big|\frac{\partial  L_\eta}{\partial u_j} (a, u)\Big| &\leq |a_j| \int_0^1 |\sigma_\eta'(x-u_j)| \Big(\sum_{k=1}^m |a_k| \sigma_\eta(x-u_k) + |f^*(x)| \Big) dx \\
&\leqslant |a_{j}| (\|a\|_1 + M) \int_0^1 |\sigma_\eta'(x-u_j)| dx \\
&\leqslant |a_{j}| (\|a\|_1 + M) \, .
\end{align*}
As a consequence,
\begin{equation*}  
\|\nabla_u L_\eta (a, u)\| \leqslant   \|a\| (\|a\|_1 + M) \leqslant  \sqrt{m+1} \|a\|^2 + M \|a\| \, .    
\end{equation*}
Similarly, from the second equality, we have
 \[
\left\vert \frac{\partial L_\eta}{\partial a_j}(a,u) \right\vert \leqslant \|a\|_1 + M \, .
 \]
As a consequence,
\[
\|\nabla_a L_\eta(a,u)\| \leqslant \sqrt{m+1} (\|a\|_1 + M) = \sqrt{m+1} (\|a\| \sqrt{m+1} + M) \, .
\]
\end{proof}

\begin{lemma}       \label{lemma:final-error-at-convergence}
Consider $\eta \geq 0$ and $u\in \cU_\eta$ such that there is a neuron at distance less than $\eta$ from each discontinuity of $f^*$ and $3\eta \leq \Delta v$. Then 
\[
\int_0^1 |f_\eta(x;a_\eta^*(u), u) - f^*(x)|^2 \diff x \leqslant 6 M^2 \eta n \, .
\]
\end{lemma}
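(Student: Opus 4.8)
The plan is to exploit that $a_\eta^*(u)$ is, by Proposition~\ref{prop:unicity-minimizer}, the \emph{minimizer} of $a \mapsto L_\eta(a,u)$ on $\cU_\eta$, so that it suffices to exhibit a single explicit coefficient vector $\tilde a$ whose associated network is close to $f^*$. Concretely, since $\int_0^1 |f_\eta(x;a_\eta^*(u),u)-f^*(x)|^2\diff x = 2L_\eta(a_\eta^*(u),u) \leq 2L_\eta(\tilde a,u) = \int_0^1 |f_\eta(x;\tilde a,u)-f^*(x)|^2\diff x$, any good test function yields the desired upper bound. When $\eta=0$ the claimed bound is $0$, and the construction below is exact (each neuron sits exactly at its discontinuity), so I would treat $\eta=0$ separately and focus on $\eta>0$.

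For the test function I would place one neuron at each discontinuity. For $i\in\{1,\dots,n-1\}$, let $u_{j_i}$ be a neuron with $|u_{j_i}-v_i|<\eta$, which exists by hypothesis; these indices are pairwise distinct since two discontinuities are at distance at least $\Delta v\geq 3\eta>2\eta$. I would set $\tilde a_0 = f^*_0$, $\tilde a_{j_i}=f^*_i-f^*_{i-1}$, and all remaining coordinates to zero. Recalling $\sigma_\eta(\cdot-u_0)\equiv 1$ on $[0,1]$, this gives $f_\eta(x;\tilde a,u)=f^*_0+\sum_{i=1}^{n-1}(f^*_i-f^*_{i-1})\sigma_\eta(x-u_{j_i})$, while a telescoping argument shows $f^*(x)=f^*_0+\sum_{i=1}^{n-1}(f^*_i-f^*_{i-1})\sigma_0(x-v_i)$ for almost every $x$. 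Subtracting,
\[
f_\eta(x;\tilde a,u)-f^*(x)=\sum_{i=1}^{n-1}(f^*_i-f^*_{i-1})\big(\sigma_\eta(x-u_{j_i})-\sigma_0(x-v_i)\big)\,.
\]

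To conclude I would bound each summand. The function $\sigma_\eta(\cdot-u_{j_i})-\sigma_0(\cdot-v_i)$ is bounded by $1$ in absolute value and vanishes outside the interval $J_i=[\min(v_i,u_{j_i}-\eta/2),\max(v_i,u_{j_i}+\eta/2)]$, which is contained in $[v_i-3\eta/2,v_i+3\eta/2]$ and has length at most $3\eta/2$ (using that $\sigma_\eta(\cdot-u_{j_i})$ is constant off $[u_{j_i}-\eta/2,u_{j_i}+\eta/2]$, that $\sigma_0(\cdot-v_i)$ is constant off $v_i$, and that $|u_{j_i}-v_i|<\eta$). Since $v_{i+1}-v_i\geq\Delta v\geq 3\eta$, the containing intervals are pairwise disjoint, so at each $x$ at most one summand is nonzero and the integral splits cleanly; combined with $|f^*_i-f^*_{i-1}|\leq 2M$, each term contributes at most $(2M)^2\cdot\tfrac{3\eta}{2}=6M^2\eta$, and summing over the $n-1$ discontinuities gives $6M^2\eta(n-1)\leq 6M^2\eta n$. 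The step requiring real care is the geometric estimate that each bump has support length at most $3\eta/2$ rather than the looser $3\eta$: this sharp factor is exactly what produces the stated constant $6$ (a factor-$2$ loss would give $12$), and the disjointness of the bumps is precisely where the hypothesis $3\eta\leq\Delta v$ is used.
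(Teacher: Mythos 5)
Your proposal is correct and follows essentially the same route as the paper: both use the minimality of $a_\eta^*(u)$ and exhibit the identical test vector (bias $f^*_0$, weight $f^*_i - f^*_{i-1}$ on the neuron within $\eta$ of $v_i$, zeros elsewhere), then bound the error by noting the network and target disagree by at most $2M$ only on disjoint intervals of length at most $3\eta/2$ around each discontinuity. The paper states this last geometric fact without detail, whereas you spell out the telescoping identity and the support estimate explicitly; the content is the same.
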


\begin{proof}
By definition of $a_\eta^*(u)$,
\[
\int_0^1 |f_{\eta}(x;a_\eta^*(u), u) - f^*(x)|^2 \diff x = \min_{a \in \R^{m+1}} \int_0^1 |f_\eta(x;a,u) - f^*(x)|^2 \diff x \, .
\]
Thus it is enough to exhibit some $a$ for which the latter integral is smaller than $6M^2\eta n$ to conclude. 
 
We construct such an $a$ as follows: set $a_0 = f^*(0)$, and for each discontinuity $v_i$, set the coefficient of a neuron at distance less than $\eta$ to the value $f_i^* - f_{i-1}^*$ and set all other neurons to zero. Note that the active neurons are distinct since $3 \eta \leq \Delta v$.
	
Then the neural network is equal to the target function everywhere except on an interval of size $3 \eta / 2$ around each discontinuity, where they disagree (in infinite norm) by at most $2 M$.
	
\end{proof}

\begin{lemma}   \label{lemma:good}
	Let $m$ be a positive integer and $u_1, \dots, u_m$ be i.i.d.~uniform random variables in $[0,1]$. Assume that 
	\begin{equation*}
		m \geq \frac{6}{\Delta v} \Big(4 + \log n + \log \frac{1}{\delta}\Big) \, .
	\end{equation*}
	Then, with probability at least $1 - \delta$, the vector $u$ is $D$-good with $D = \frac{\delta}{6(m+1)^2}$. 
\end{lemma}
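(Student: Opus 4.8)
I would show that each of the three defining properties \ref{it:ass-4}, \ref{it:ass-Delta}, \ref{it:ass-distance} of a $D$-good vector (Definition \ref{def:good}) fails with small probability, and then conclude by a union bound, arranging the three failure probabilities to sum to at most $\delta$. A convenient budget is $\delta/2$ for \ref{it:ass-4} (the condition whose control consumes the hypothesis on $m$) and $\delta/6$ for each of \ref{it:ass-Delta} and \ref{it:ass-distance}. The value $D = \delta/(6(m+1)^2)$ is tuned precisely so that the latter two fit into their budgets, as the computations below reveal.

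\paragraph{Conditions \ref{it:ass-4} and \ref{it:ass-Delta}.} For the covering condition \ref{it:ass-4}, fix a piece $[v_i, v_{i+1}]$ ($i = 0, \dots, n-1$); the number $N_i$ of neurons it contains is binomial with parameters $m$ and $p_i = v_{i+1}-v_i \geqslant \Delta v$, so its mean satisfies $\mu_i = m p_i \geqslant m \Delta v \geqslant 24$. The condition fails on piece $i$ exactly when $N_i \leqslant 5$, which I would control with the Chernoff lower-tail bound $\Prob(N_i \leqslant 5) \leqslant (\mu_i/5)^5 e^{5-\mu_i}$. The hypothesis gives $\mu_i \geqslant 6(4 + \log n + \log\tfrac1\delta)$, and the linear decay in $\mu_i$ dominates the slowly growing factor $(\mu_i/5)^5$, yielding $\Prob(N_i \leqslant 5) \leqslant \delta/(2n)$; a union bound over the $n$ pieces gives failure probability at most $\delta/2$. (The constants $6$ and $4$ in the hypothesis are exactly what makes this tail small enough.) For the spacing condition \ref{it:ass-Delta}, the $m$ uniform points cut $[0,1]$ into $m+1$ spacings $S_0, \dots, S_m$ that are uniform on the simplex $\{S_k \geqslant 0, \sum_k S_k = 1\}$, whence $\Prob(\min_k S_k \geqslant D) = (1-(m+1)D)^m$ for $(m+1)D \leqslant 1$. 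Because the virtual endpoints $u_0 = -\eta/2$ and $u_{m+1} = 1+\eta/2$ lie outside $[0,1]$, the two boundary gaps entering $\Delta(u)$ only exceed $S_0$ and $S_m$, so $\Delta(u) \geqslant \min_k S_k$; thus $\Prob(\Delta(u) < D) \leqslant 1 - (1-(m+1)D)^m \leqslant m(m+1)D = \tfrac{m\delta}{6(m+1)} \leqslant \delta/6$.

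\paragraph{Condition \ref{it:ass-distance} (the delicate step).} For an interior discontinuity $v_i$, set $L_i = v_i - u_i^{\mathrm{L}}$ and $R_i = u_i^{\mathrm{R}} - v_i$, the distances to the nearest neuron on each side. Then $u_i^{\mathrm{R}} + u_i^{\mathrm{L}} - 2v_i = R_i - L_i$, so the condition fails precisely when $|R_i - L_i| < D$. The key subtlety is that $L_i$ and $R_i$ are \emph{not} independent, since the two sides share the fixed total of $m$ neurons; one must therefore resist approximating them as independent and instead use the exact joint law. Since $\Prob(L_i > \ell, R_i > r) = (1-\ell-r)^m$ for $\ell \leqslant v_i$, $r \leqslant 1-v_i$ (no neuron lands in $(v_i-\ell, v_i+r)$), the joint density on this box equals $m(m-1)(1-\ell-r)^{m-2}$. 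Integrating it over $\{|\ell-r| < D\}$ in the variables $s = \ell+r$, $d = r - \ell$ collapses to $\int_{-D}^{D}\int_{|d|}^{1} \tfrac12 m(m-1)(1-s)^{m-2}\,\mathrm{d}s\,\mathrm{d}d = 1 - (1-D)^m \leqslant mD$; as the box $[0,v_i]\times[0,1-v_i]$ is contained in the region integrated, $\Prob(|R_i - L_i| < D) \leqslant mD$. Summing over the $n-1$ interior discontinuities and using $n - 1 \leqslant m$ (which follows from $m \geqslant 24 n$, itself a consequence of $\Delta v \leqslant 1/n$ and the hypothesis), the total failure probability is at most $(n-1)mD \leqslant m^2\delta/(6(m+1)^2) \leqslant \delta/6$.

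\paragraph{Conclusion.} Adding the three bounds, $u$ fails to be $D$-good with probability at most $\delta/2 + \delta/6 + \delta/6 \leqslant \delta$, which proves Lemma \ref{lemma:good}. I expect condition \ref{it:ass-distance} to be the main obstacle: the reduction to $|R_i - L_i| < D$ is clean, but the computation hinges on using the correct (dependent) joint density of the two nearest-neighbor distances, after which the change of variables produces the sharp bound $mD$. A minor recurring point is the handling of boundary configurations in which one side of $v_i$ carries no neuron; these are harmless because condition \ref{it:ass-4}, already controlled above, rules them out.
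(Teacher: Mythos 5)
Your proof is correct, and it shares the paper's skeleton: a union bound over the three conditions of Definition \ref{def:good}, and, for the delicate condition \ref{it:ass-distance}, exactly the paper's argument — configurations with no neuron on one side of $v_i$ are absorbed into the failure of \ref{it:ass-4}, the joint density of the two nearest-neighbour distances is $m(m-1)(1-\ell-r)^{m-2}$, and the change of variables yields the bound $mD$ per discontinuity, summed over the $n-1$ jumps. Where you genuinely diverge is on the two easier conditions. For \ref{it:ass-4}, the paper avoids Chernoff bounds altogether: it splits the neurons into six groups of $\lfloor m/6\rfloor$, observes that an interval containing fewer than six neurons must be entirely missed by some group, and uses the elementary estimate $n(1-\Delta v)^{\lfloor m/6\rfloor}\leq \delta/18$ per group; your binomial lower-tail bound $(\mu_i/5)^5 e^{5-\mu_i}\leq \delta/(2n)$ does hold — the left side is decreasing in $\mu_i$ for $\mu_i>5$ and the inequality can be checked at $\mu_i = 24+6\log n + 6\log(1/\delta)$ — but you assert this numerical verification rather than carry it out, and it is the one place where your writeup leaves real work to the reader. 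For \ref{it:ass-Delta}, the paper conditions on the remaining points and union-bounds over pairs, getting $2m(m+1)D\leq \delta/3$, whereas your exact minimum-spacing law $(1-(m+1)D)_+^m$ gives the sharper $m(m+1)D\leq \delta/6$; your remark that the virtual endpoints $u_0, u_{m+1}$ only enlarge the two boundary gaps is precisely what is needed to compare $\Delta(u)$ with the spacings. In short: same decomposition and same key computation for \ref{it:ass-distance}; your variants of \ref{it:ass-4} and \ref{it:ass-Delta} buy slightly sharper constants by invoking standard probabilistic facts, while the paper's versions are more elementary and fully self-contained.
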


\begin{proof}
    We define the following events:
    \begin{enumerate}[label=(\alph*)]
        \item $A$ is the event ``there are at least $6$ positions $u_j$ in each interval $[v_i, v_{i+1}]$ for $i \in \{0, \dots, n-1\}$'',
        \item $B$ is the event ``$\Delta(u) \geq D$'', 
        \item for all $i \in \{1, \dots, n-1 \}$, $E_i$ is the event ``there are at least one neuron on the left and on the right of $v_i$'' and $C_i$ is the event ``$E_i$ holds and $|u_i^{\rm{R}} + u_i^{\rm{L}}-2v_i| \geq D$''. 
    \end{enumerate}
    Note that by Definition \ref{def:good}, $u$ is $D$-good if and only if the event $A \cap B \cap \left(\bigcap_{i} C_i\right)$ holds. To show that this holds with high probability, we bound the probability of the complement 
    \begin{align*}
        \bigg( A \cap B \cap \Big(\bigcap_{i} C_i\Big) \bigg)^c &= A^c \cup B^c \cup \Big(\bigcup_{i} C_i^c  \Big) = A^c \cup B^c \cup \Big(\bigcup_{i} \left( C_i^c \cap A \right) \Big) \\&\subset  A^c \cup B^c \cup \Big(\bigcup_{i} \left( C_i^c \cap E_i \right) \Big) \, \qquad\qquad \text{(as $A \subset E_i$)} \, .
    \end{align*}
    Thus
    \begin{align*}
        \Prob(\text{$u$ is not $D$-good}) \leq \Prob(A^c) + \Prob(B^c) + \sum_{i=1}^{n-1} \Prob(C_i^c \cap E_i) \, .
    \end{align*}
    Below, we bound separately the three terms of the right hand side.
	\begin{enumerate}[label=(\alph*)]
		\item Denote $m' = \lfloor m/6 \rfloor$. For any $i \in \{0, \dots, n-1\}$, the set $\mathcal{A}_i = \{j \in \{1, \dots, m'\} \, | \,  u_j \in [v_{i}, v_{i+1}]\}$ is empty with probability $(1-(v_{i+1}-v_{i}))^{m'} \leqslant (1 - \Delta v)^{m'}$. Thus by the union bound, the probability that at least one of $\mathcal{A}_1, \dots, \mathcal{A}_{n}$ is empty is upper bounded by $n(1 - \Delta v)^{m'}$.
		
		We now check that $n (1 - \Delta v )^{m'} \leqslant \delta/18$. Indeed,  
		\begin{align*}
			m' = \left\lfloor \frac{m}{6}\right\rfloor \geq \frac{m}{6} - 1 \geq \frac{3 + \log n + \log \frac{1}{\delta}}{\Delta v} \geq \frac{\log n + \log \frac{18}{\delta}}{\Delta v} \geq -\frac{\log n + \log \frac{18}{\delta}}{\log(1-\Delta v)} \, ,
		\end{align*}
		where we use $\Delta v \leq 1$, $3 \geqslant \log(18)$, and $\log(1-\Delta v) \leq - \Delta v < 0$. This gives the desired inequality.

        In other words, the probability that at least one of the intervals $[v_i,v_{i+1}]$ contains none of the $u_1, \dots, u_{m'}$ is bounded by $\delta /18$. As a consequence, by the union bound, the probability that at least one of the intervals $[v_i,v_{i+1}]$ contains strictly less than $6$ of the $u_1, \dots, u_m$ is bounded by $\delta/3$, i.e., $\Prob(A^c) \leq \delta/3$. 
		
		\item Recall that by convention, $u_0 = - \frac{\eta}{2}$ and $u_{m+1} = 1+\frac{\eta}{2}$. For all $i \in \{0, \dots, m+1\}$, denote $I_i = (u_i-D, u_i + D)$. Denote $F_j$ the event ``$u_j \in I_i$ for some $i \in \{ 0, \dots, m+1 \}$, $i \neq j$''. Note that $B^c = \cup_{j=1}^m F_j$. 
		
		Fix $j = 1, \dots, m$. By conditioning on $u_i$ for all $i \in \{ 0, \dots, m+1 \}$, $i \neq j$, we see that $\Prob(F_j) \leq 2(m+1)D$. By the union bound,
  \[
  \Prob(B^c) \leq 2m(m+1)D \leq \frac{\delta}{3}.
  \]

\item Take $i \in \{1, \dots, n-1\}$. For convenience, we define the random variable $u_i^{\rm{L}}$ (resp.~$u_i^{\rm{R}}$) on the full probability space by setting $u_i^{\rm{L}} = 0$ (resp.~$u_i^{\rm{R}} = 1$) when there is no neuron on the left (resp.~the right) of $v_i$. We compute the joint cumulative distribution function of $(u_i^{\rm{L}},u_i^{\rm{R}})$ (with a convenient change of inequality): for all $0 \leq y \leq v_i \leq z \leq 1$,
\begin{align*}
    \Prob(u_i^{\rm{L}} \leq y, u_i^{\rm{R}} \geq z) = \Prob(\text{$\forall j \in \{1, \dots, m\}$, $u_j \notin [y,z]$}) = \left(1-(z-y)\right)^m \, .
\end{align*}
We observe that the joint cumulative distribution function of $(u_i^{\rm{L}},u_i^{\rm{R}})$ is a smooth function of $(y,z)$ when $(y,z) \in (0,v_i) \times (v_i, 1)$. Note that the events $E_i$ and $\{(u_i^{\rm{L}},u_i^{\rm{R}}) \in (0,v_i) \times (v_i, 1)\}$ are equal up to a null set. Therefore, on this event, $(u_i^{\rm{L}}, u_i^{\rm{R}})$ is an absolutely continuous random variable with density $g:(0,v_i) \times (v_i,1) \to \R$, 
\begin{equation*}
    g(y,z) = - \frac{\partial^2}{\partial y \partial z} \Prob(u_i^{\rm{L}} \leq y, u_i^{\rm{R}} \geq z)  = m(m-1) \left(1-(z-y)\right)^{m-2} \, .
\end{equation*}
We compute
\begin{align*}
\Prob(C_i^c\cap E_i) &= \Prob(\{|u_i^{\rm{R}} + u_i^{\rm{L}} - 2v_i| \leq D \} \cap E_i) \\
&= \int_{\{0 <y< v_i <z<1\}} m(m-1) \left(1-(z-y)\right)^{m-2} \mathbf{1}_{\{|y+z-2v_i| \leq D\}} \diff y \diff z \, .
\end{align*}
We make the change of variables $\theta = z-y$, $\nu = z+y$.
\begin{align*}
    \Prob(C_i^c\cap E_i) &= \frac{m(m-1)}{2} \int_{\{0 < \frac{\nu-\theta}{2}< v_i < \frac{\nu+\theta}{2}<1\}} (1-\theta)^{m-2} \mathbf{1}_{|\nu-2v_i| \leq D}\diff \theta\diff \nu \\
    &\leq \frac{m(m-1)}{2} \Big( \int_0^1(1-\theta)^{m-2} \diff \theta\Big) \Big( \int_{-\infty}^\infty \mathbf{1}_{|\nu-2v_i| \leq D}\diff \nu \Big) \\
    &= Dm \, .
\end{align*}
Using $m \geq 24/\Delta v \geq 24n$, we have 
\begin{align*}
    \sum_{i=1}^{n-1} \Prob(C_i^c\cap E_i) \leq (n-1)Dm \leq \frac{\delta}{24\times 6} \leq \frac{\delta}{3} \, . 
\end{align*}
This concludes the proof.
\end{enumerate}
\end{proof}

\section{Proofs of the results of the main text}
\label{apx:proofs}

\subsection{Proof of Proposition \ref{prop:unicity-minimizer}}

Let us lower-bound the smallest eigenvalue of $H_\eta(u)$ which is equal to 
\[
    \min_{\|a\| = 1} a^\top H_\eta(u) a \, .
\]  
Now for $a \in \R^{m+1}$ such that $\|a\| = 1$,
\[
    a^\top H_\eta(u) a = \sum_{i,j=0}^{m} a_i a_j \int_0^1 \sigma_\eta(x - u_{i})\sigma_\eta(x - u_{j}) \diff x = \int_0^1 \left(\sum_{i=0}^{m} a_i \sigma_\eta(x - u_{i})\right)^2 \diff x \, .
\]
Since $\Delta u > 2 \eta$ (because $u \in \cU$) and $u_0 = - \nicefrac{\eta}{2}$, $u_{m+1} = 1+\nicefrac{\eta}{2}$, the intervals $[u_i + \eta/2, u_{i+1} - \eta/2]$ for $i \in \{0, \dots, m\}$ are disjoint and included in $[0, 1]$. Thus
\[
 a^\top H_\eta(u) a \geqslant \sum_{i=0}^{m} \int_{u_i + \eta/2}^{u_{i+1} - \eta/2} \left(\sum_{i=0}^{m} a_i \sigma_\eta(x - u_{i})\right)^2 \diff x \, .
\]
Since $\sigma(x) = 0$ if $x < -1/2$ and $\sigma(x) = 1$ if $x > 1/2$, we have that $\sigma_\eta(x) = 0$ if $x < -\eta/2$ and $\sigma_\eta(x) = 1$ if $x > \eta/2$. Further recall that the $u_i$ are ordered in increasing order. As a consequence, 
\begin{align}
 a^\top H_\eta(u) a &\geqslant \sum_{i=0}^{m} \int_{u_i + \eta/2}^{u_{i+1} - \eta/2} \left( \sum_{k=0}^{i} a_k\right)^2 \diff x \nonumber \\
 &= \sum_{i=0}^m (u_{i+1} - u_i - \eta) \left( \sum_{k=0}^{i} a_k\right)^2 \nonumber \\
 &\geqslant \frac{\Delta (u)}{2} \sum_{i=0}^{m} \left( \sum_{k=0}^i a_k\right)^2 \, , \label{eq:tech-proof-final-lower-bound-hessian}   
\end{align}
where in the last step, we used that $\Delta(u) > 2\eta$ and thus $u_{i+1}-u_i - \eta \geqslant \Delta(u) -\eta \geqslant \Delta(u) - \Delta(u)/2 = \Delta(u)/2$. 
Now, denote $c_0 = 0$ and $c_i = \sum_{k=0}^{i-1} a_k$. Then $\|a\| = 1$ writes
\[
    \sum_{i=0}^{m} (c_{i+1} - c_i)^2 = 1.
\]
Furthermore,
\[
    \sum_{i=0}^{m} (c_{i+1} - c_i)^2 = \sum_{i=0}^{m} c_{i+1}^2 + \sum_{i=0}^{m} c_{i}^2 - 2 \sum_{i=0}^{m} c_{i+1} c_i \leqslant 4 \sum_{i=0}^{m+1} c_i^2 \, .
\]
Hence 
\[
    \sum_{i=0}^{m+1} c_i^2 \geqslant \frac{1}{4} \, ,
\]
which shows in conjunction with \eqref{eq:tech-proof-final-lower-bound-hessian} that the smallest eigenvalue of $H_\eta(u)$ is lower-bounded by $\frac{\Delta u}{8}$.

\subsection{Proof of Proposition \ref{prop:good-definition-fast-slow}}

To show that 
$G(u) = (\nabla_u L_\eta) (a_\eta^*(u), u) $ is Lipschitz-continuous on $\cU_\eta$, we show that it is differentiable on $\mathcal{U}_\eta$ and that its derivatives are uniformly bounded. The chain rule gives
\begin{equation*}
    \frac{\partial G_j}{\partial u_k} = \sum_{l=0}^m \frac{\partial a^*_{\eta,l}}{\partial u_k}(u) \frac{\partial^2 L_\eta}{\partial u_j \partial a_l} (a^*_\eta(u), u) + \frac{\partial^2 L_\eta}{\partial u_j \partial u_k} (a^*_\eta(u), u) \, .
 \end{equation*}
From \eqref{eq:def-gradient-u}, using that $\sigma$ is twice continuously differentiable, it can be checked that $\frac{\partial L_\eta}{\partial u_j}$ is differentiable in both its arguments and its derivatives are uniformly upper-bounded when $a$ is bounded. Furthermore, for $u \in \mathcal{U}_\eta$,
\[
\|a_\eta^*(u)\| \leqslant \|H_\eta(u)^{-1}\| \, \|b_\eta(u)\| \leqslant \frac{8 M \sqrt{m+1}}{\Delta (u)} \, ,
\]
by Lemma \ref{lemma:upper-bound-b} and Proposition \ref{prop:unicity-minimizer}.
Finally, according to Lemma \ref{lemma:bound-change-speed-a-star}, $a_\eta^*$ is differentiable with derivatives uniformly upper-bounded on $\mathcal{U}_\eta$. This concludes the proof.

\subsection{Proof of Proposition \ref{prop:final-control-slow-system}}

In this proof, we denote $u_i^{\rm{L}}(\tau)$ (resp.~$u_i^{\rm{R}}(\tau)$) the position at time $\tau$ of the neuron that is \emph{at initialization} closest to $v_i$ to the left (resp.~the right). Note that because of the movement of the neurons, it could be that $u_i^{\rm{L}}$ (resp.~$u_i^{\rm{R}}$) does not remain the neuron closest to the left (resp.~the right) throughout the dynamics. Our proof discusses when this phenomenon occurs. Similarly, denote $u_i^{\rm{LL}}$ (resp.~$u_i^{\rm{RR}}$) the neuron second closest to the left (resp.~the right) of $v_i$. Since the initialization is $D$-good, note that all these neurons are distinct.

Denote $\overline{\mathcal{T}}$ the minimal time $\tau \in [0, \cT_{\max})$ such that $\Delta(u(\tau)) \leqslant \nicefrac{D}{2}$ or there are less than two neurons in some piece $[v_i, v_{i+1}]$ of $f^*$. Note that by assumption, $\Delta(u(0)) \geqslant D > D/2$ and there are at least $6$ neurons in each interval at initialization, thus $\overline{\mathcal{T}} > 0$. Furthermore, using the trivial inequalities $M \geqslant \Delta f/2$, $m+1 \geqslant 1$ and $\eta^{1/2} \geqslant \eta$, we have $\frac{D}{2} = \frac{2^{11/2} M \sqrt{m+1} \sqrt{\eta}}{\Delta f} \geqslant 8 \eta > 2 \eta$. Recall that $2\eta$ is the quantity defining the set $\cU_\eta$ supporting the maximal solution of the equation \eqref{eq:two-timescale-limit}. As a consequence, we do have $\overline{\mathcal{T}} < \cT_{\max}$. At the end of the proof, we check that $\mathcal{T} < \overline{\mathcal{T}}$, by controlling carefully the movement of each neuron.

Let us first bound the difference between the dynamics of $u$ and the dynamics that we would have if at each time $\tau$, the weights $a$ were given by $a_0^*(u(\tau))$, the best approximation of~$f^*$ by a piecewise constant function with subdivision $u(\tau)$. 
For any $\tau < \overline{\mathcal{T}}$ and $j \in \{1, \dots, m\}$, by Lemma~\ref{lemma:upper-bound-gradient}, we have
\begin{align}  
&\Big|\frac{\diff u_j}{\diff \tau}(\tau) + \frac{\partial L_\eta}{\partial u_j}(a_0^*(u(\tau)), u(\tau))\Big| \nonumber \\
&\qquad= 
 \Big|\frac{\partial L_\eta}{\partial u_j}(a_\eta^*(u(\tau)), u(\tau)) - \frac{\partial L_\eta}{\partial u_j}(a_0^*(u(\tau)), u(\tau))\Big|  \nonumber \\
 &\qquad\leqslant 2 M (\sqrt{m+1} + 1) \|a_\eta^*(u(\tau)) - a_0^*(u(\tau))\| + \sqrt{m+1} \|a_\eta^*(u(\tau)) - a_0^*(u(\tau))\|^2 \, . \label{eq:final-tech-maj-g}
\end{align}
We are therefore led to bounding $\|a_\eta^*(u(\tau)) - a_0^*(u(\tau))\|$, as follows:
\begin{align*}
\|a_\eta^*(u(\tau)) - a_0^*(u(\tau))\| &\leqslant \frac{2^4 M \sqrt{m+1} \eta}{\Delta (u(\tau))} && \text{(by Lemma \ref{lemma:control-solution-fast-system})} \\
&\leqslant \frac{2^5 M \sqrt{m+1} \eta}{D} && \text{(since $\Delta(u(\tau)) \geqslant \nicefrac{D}{2}$)} \\
&= \frac{D (\Delta f)^2}{2^8 M \sqrt{m+1}}  && \text{(by definition of $D$)}.
\end{align*}
Then the first term in \eqref{eq:final-tech-maj-g} is less than
\[
\frac{(\sqrt{m+1} + 1) D (\Delta f)^2}{2^7 \sqrt{m+1}} \leqslant \frac{D (\Delta f)^2}{2^6} \, ,
\]
and the second term in \eqref{eq:final-tech-maj-g} is less than
\begin{align*}
    \frac{D^2 (\Delta f)^4}{2^{16}M^2\sqrt{m+1}} \leqslant \frac{D (\Delta f)^2}{2^{14}} \, , &&\text{using $D \leqslant \Delta(u(0)) \leqslant 1$, $\Delta f \leqslant 2M$ and $m+1 \geqslant 1$.}
\end{align*}
Hence we obtain, for any $\tau < \overline{\mathcal{T}}$ and $j \in \{1, \dots, m\}$,
\begin{equation}    \label{eq:final-control-delta-g}
 \Big|\frac{\diff u_j}{\diff \tau}(\tau) + \frac{\partial L_\eta}{\partial u_j}(a_0^*(u(\tau)), u(\tau))\Big| \leqslant \frac{D (\Delta f)^2}{60} =: \Delta g 
\end{equation}
Now, let us examine how the neurons move, by leveraging Lemma \ref{lemma:description-gradient} that gives exact formulae for $\frac{\partial L_\eta}{\partial u_j}(a_0^*(u(\tau)), u(\tau))$. First, if $u_j$ is not next to a discontinuity, $\frac{\partial L_\eta}{\partial u_j}(a_0^*(u(\tau)), u(\tau)) = 0$, hence
\begin{equation*}  
|u_{j}(\tau) - u_{j}(0)| \leqslant (\Delta g) \tau \, .    
\end{equation*}
Let us now study what happens next to a discontinuity $v_i$. Denote $(\delta f)_i = f_i^* - f_{i-1}^*$.  W.l.o.g., assume that 
\[
u_i^{\rm{R}}(0) - v_i > v_i - u_i^{\rm{L}}(0) \, .
\]
In the reverse case, the proof is the same by swapping the roles of $u_i^{\rm{L}}$ and $u_i^{\rm{R}}$, and of $u_i^{\rm{LL}}$ and $u_i^{\rm{RR}}$.
We are going to show that the dynamics of $u_i^{\rm{L}}$ are divided into two phases. Define $\cT_i$ as the minimal $\tau \in [0, \overline{\cT}]$ such that $u_i^{\rm{L}}(\tau) = v_i - \eta/2$. In the first phase $[0, \cT_i]$, we have $u_i^{\rm{L}}(\tau) < v_i - \frac{\eta}{2}$ and $u_i^{\rm{L}}$ moves towards $v_i$. In the second phase $[\cT_i, \overline{\cT}]$, we show below that $u_i^{\rm{L}}(\tau) \in [v_i - \eta, v_i + \eta]$. Note that we can have $\mathcal{T}_i = 0$ if $u_i^{\rm{L}}(0) \geqslant v_i - \frac{\eta}{2}$. It is also possible that $\cT_i = \infty$ a priori; this means that the second phase does not exist. We show below that this case does not happen. Figure \ref{fig:proof-prop-3} depicts the two phases.

\begin{figure}[ht]
     \centering
     \begin{subfigure}[b]{0.49\textwidth}
         \centering
         \includegraphics[width=\textwidth]{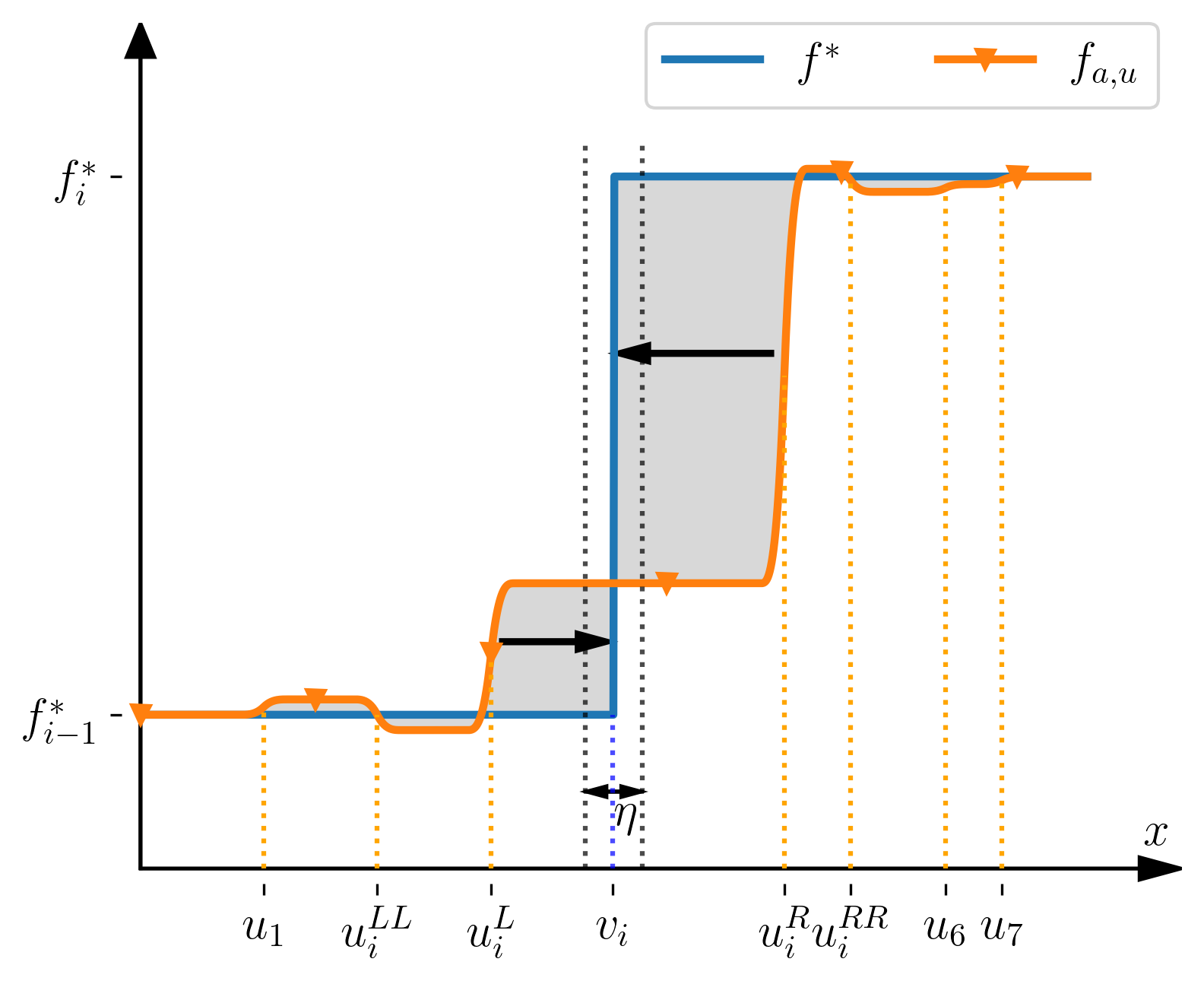}
         \caption{Phase $1$}
     \end{subfigure}
     \hfill
     \begin{subfigure}[b]{0.49\textwidth}
         \centering
         \includegraphics[width=\textwidth]{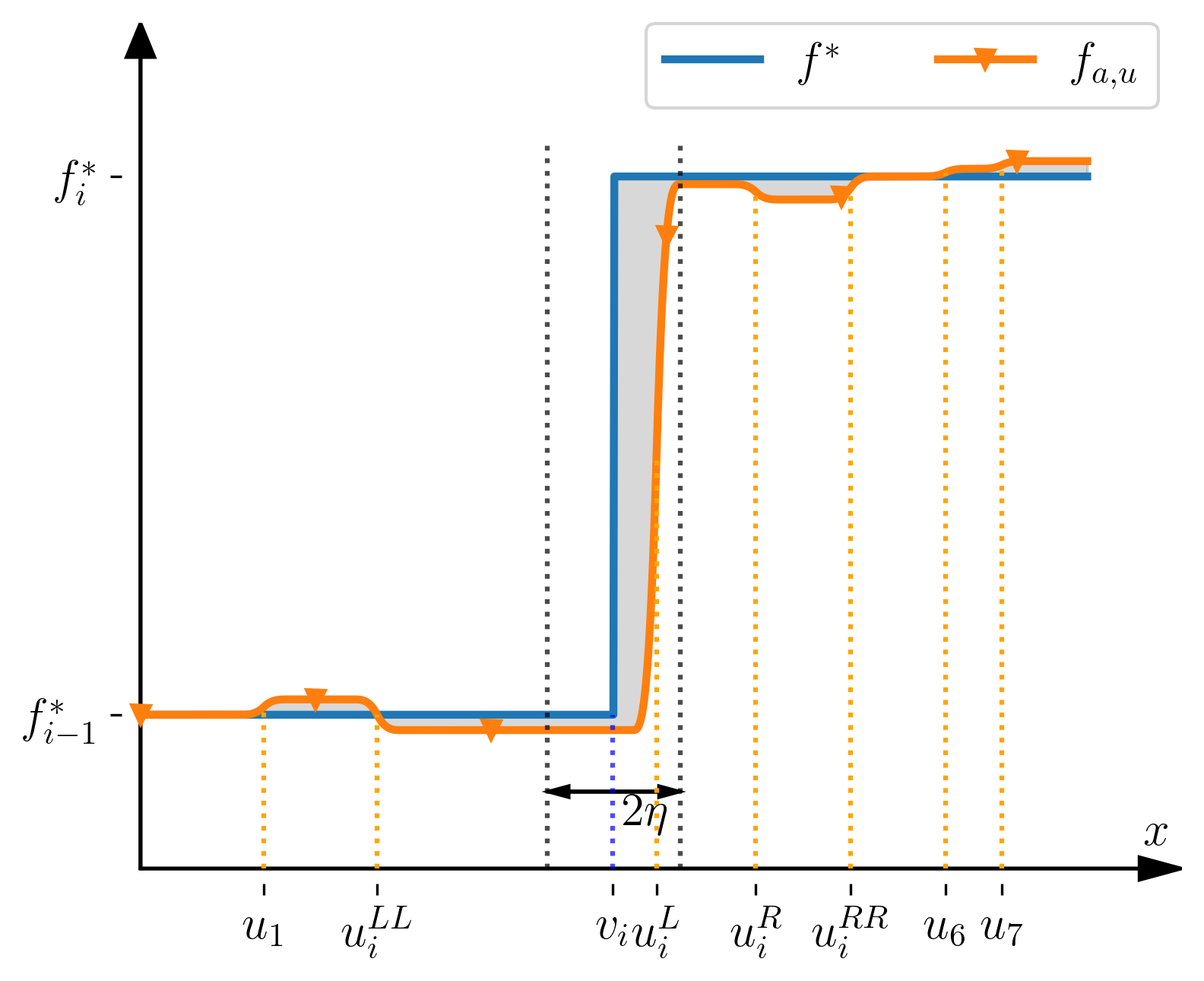}
         \caption{Phase $2$}
     \end{subfigure}
     \caption{Dynamics of the neurons next to a discontinuity $v_i$. In the first phase, $u_i^{\rm{L}}$ and $u_i^{\rm{R}}$ move towards $v_i$, until the closest neuron (in this case $u_i^{\rm{L}}$) reaches the interval of size $\eta$ centered in $v_i$. In the second phase, $u_i^{\rm{L}}$ remains in an interval of size $2 \eta$ around $v_i$, and none of the neurons move significantly.}
     \label{fig:proof-prop-3}
\end{figure}

Beginning by the first phase, we have, while $u_i^{\rm{L}}(\tau) < v_i - \frac{\eta}{2}$ and $u_i^{\rm{R}}(\tau) > v_i + \frac{\eta}{2}$, according to Lemma \ref{lemma:description-gradient},
\begin{align*}
\frac{\partial L_\eta}{\partial u_i^{\rm{L}}}(a_0^*(u(\tau)), u(\tau)) &= - \frac{1}{2} \frac{(u_i^{\rm{R}}(\tau) - v_i)^2 (\delta f)_i^2}{(u_i^{\rm{R}}(\tau) - u_i^{\rm{L}}(\tau))^2} \, , \\
\frac{\partial L_\eta}{\partial u_i^{\rm{R}}}(a_0^*(u(\tau)), u(\tau)) &= \frac{1}{2} \frac{(v_i - u_i^{\rm{L}}(\tau))^2 (\delta f)_i^2}{(u_i^{\rm{R}}(\tau) - u_i^{\rm{L}}(\tau))^2} \, .
\end{align*}
For ease of computation, let $d_i^{\rm{L}}(\tau) = v_i - u_i^{\rm{L}}(\tau)$ and $d_i^{\rm{R}}(\tau) = u_i^{\rm{R}}(\tau) - v_i$ be the distances between the neurons and $v_i$. Then, by \eqref{eq:final-control-delta-g},
\begin{align*}
\frac{\diff  d_i^{\rm{R}}}{\diff \tau}(\tau) + \frac{\diff  d_i^{\rm{L}}}{\diff \tau}(\tau) 
&\leqslant - \frac{1}{2} \frac{((d_i^{\rm{R}}(\tau))^2 + (d_i^{\rm{L}}(\tau))^2) (\delta f)_i^2}{(d_i^{\rm{L}}(\tau) + d_i^{\rm{R}}(\tau))^2} + 2  \Delta g \\
&\leqslant -  \frac{(\Delta f)^2}{4} + 2 \frac{D(\Delta f)^2}{60}
\leqslant - \frac{(\Delta f)^2}{5}
\end{align*}
since $D \leqslant \Delta(u(0)) \leqslant 1$.
Thus, in some time less than $\mathcal{T} = \frac{6}{(\Delta f)^2}$, $d_i^{\rm{R}}(\tau) + d_i^{\rm{L}}(\tau) \leqslant \eta$, that is, either $u_i^{\rm{L}}$ reaches $v_i - \frac{\eta}{2}$ or $u_i^{\rm{R}}$ reaches $v_i + \frac{\eta}{2}$. Let us check that the second event cannot actually happen: while $u_i^{\rm{L}}(\tau) < v_i - \frac{\eta}{2}$ and $u_i^{\rm{R}}(\tau) > v_i + \frac{\eta}{2}$, we also have
\begin{align*}
\frac{\diff  d_i^{\rm{R}}}{\diff \tau}(\tau) - \frac{\diff  d_i^{\rm{L}}}{\diff \tau}(\tau)  &\geqslant  \frac{((d_i^{\rm{R}}(\tau))^2 - (d_i^{\rm{L}}(\tau))^2) (\delta f)_i^2}{(d_i^{\rm{L}}(\tau) + d_i^{\rm{R}}(\tau))^2} - 2  \Delta g \\
&=  \frac{(d_i^{\rm{R}}(\tau) - d_i^{\rm{L}}(\tau)) (\delta f)_i^2}{d_i^{\rm{L}}(\tau) + d_i^{\rm{R}}(\tau)} - 2 \Delta g \, .
\end{align*}
By condition \ref{it:ass-distance} of Definition \ref{def:good} and by \eqref{eq:final-control-delta-g}, we have $d_i^{\rm{R}}(0) - d_i^{\rm{L}}(0) \geqslant D = \frac{60 \Delta g}{(\Delta f)^2} \geq \frac{60 \Delta g}{(\delta f)_i^2}$, and furthermore $d_i^{\rm{L}}(\tau) + d_i^{\rm{R}}(\tau) \leq 1$. An easy reasoning then shows that $d_i^{\rm{R}} - d_i^{\rm{L}}$ is increasing. Therefore $u_i^{\rm{R}}$ must remain further away from $v_i$ than $u_i^{\rm{L}}$. 

In summary, we showed that there exists some time $\mathcal{T}_i \leqslant \mathcal{T}$ when $u_i^{\rm{L}}(\cT_i) = v_i - \frac{\eta}{2}$,  which marks the end of the first phase, and we also have
\begin{equation*}  
d_i^{\rm{R}}(\mathcal{T}_i) - d_i^{\rm{L}}(\mathcal{T}_i) \geqslant d_i^{\rm{R}}(0) - d_i^{\rm{L}}(0) \geqslant D \, .    
\end{equation*}

Moving on to the study of the second phase, let us show that $u_i^{\rm{L}}(\tau)$ stays in the interval $[v_i - \eta, v_i + \eta]$ for $\tau \in [\mathcal{T}_i, \overline{\mathcal{T}})$.
Consider any $\tau \leqslant \overline{\cT}$ such that $u_i^{\rm{L}}(\tau) = v_i - \eta$. Then we have by \eqref{eq:final-control-delta-g} and Lemma~\ref{lemma:description-gradient}
\begin{equation}    \label{eq:tech-lower-bound-u-i-l}
\frac{\diff u_i^{\rm{L}}}{\diff \tau}(\tau) \geqslant  \frac{(u_i^{\rm{R}}(\tau) - v_i)^2 (\delta f)_i^2}{(u_i^{\rm{R}}(\tau) - v_i + \eta)^2} - \Delta g \geqslant \Delta g \, ,
\end{equation}
where the second upper bound comes from the fact that we have $u_i^{\rm{R}}(\tau) - v_i \geqslant \frac{D}{2} - \eta$ since $\Delta(u(\tau)) \geqslant \nicefrac{D}{2}$, and furthermore, $x \mapsto \frac{x^2}{(x + \eta)^2}$ is increasing, hence
\[
\frac{(u_i^{\rm{R}} - v_i)^2 (\delta f)_i^2}{(u_i^{\rm{R}} - v_i + \eta)^2} \geqslant \Big(\frac{\frac{D}{2} - \eta}{\frac{D}{2}}\Big)^2 \Delta f^2 \underset{(D/2 \geqslant 2\eta)}{\geqslant} \frac{(\Delta f)^2}{4} \geqslant 2 \Delta g \, .
\]
Equation \eqref{eq:tech-lower-bound-u-i-l} implies that $u_i^{\rm{L}}(\tau) \geqslant v_i - \eta$ for all $\tau \in [\mathcal{T}_i, \overline{\mathcal{T}})$.
Similarly, consider any $\tau \leqslant \overline{\cT}$ such that $u_i^{\rm{L}}(\tau) = v_i + \eta$. Note that, for such a {$\tau$}, $u_i^{\rm{L}}(\tau)$ is now on the right of $v_i$, and the neurons flanking $v_i$ are $u_i^{\rm{LL}}$ and $u_i^{\rm{L}}$. Thus we have by \eqref{eq:final-control-delta-g} and Lemma \ref{lemma:description-gradient}
\[
\frac{\diff u_i^{\rm{L}}}{\diff \tau}(\tau) \leqslant - \frac{(v_i - u_i^{\rm{LL}}(\tau))^2 (\delta f)_i^2}{(v_i + \eta - u_i^{\rm{LL}}(\tau))^2} + \Delta g \leqslant - \Delta g \, ,
\]
where the second lower bound unfolds similarly as previously. This shows that $u_i^{\rm{L}}(\tau) \leqslant v_i + \eta$ for all $\tau \in [\mathcal{T}_i, \overline{\mathcal{T}})$.

We now check that $\cT < \overline{\cT}$, that is, for all $\tau \leqslant \cT$, $\Delta(u(\tau)) > D/2$ and there are at least two neurons in each interval $[v_i, v_{i+1}]$. Starting with the first condition, we say that neurons $u_j$ and $u_k$ collide if $|u_j(\tau) - u_k(\tau)| = D/2$ for some $\tau \leqslant \cT$. Let us show that no pair of neurons collide. 

We start by showing that there is no collision between $u_i^{\rm{LL}}$ and $u_i^{\rm{L}}$. In the first phase $[0, \cT_i]$, we have $\frac{\diff u_i^{\rm{LL}}}{\diff \tau}(\tau) \leqslant \Delta g$. Recall that we also have $\frac{\diff u_i^{\rm{L}}}{\diff \tau}(\tau) \geqslant - \Delta g$ and thus for $\tau \leqslant \cT_i$, 
\begin{equation*}
    u_i^{\rm{L}}(\tau) - u_i^{\rm{LL}}(\tau) \geqslant u_i^{\rm{L}}(0) - u_i^{\rm{LL}}(0) - 2 \cT \Delta g \geqslant \frac{4 D}{5}
\end{equation*}
since $u_i^{\rm{L}}(0) - u_i^{\rm{LL}}(0) \geqslant D$ and $\cT \Delta g = \nicefrac{D}{10}$ by definition of $\cT$ and $\Delta g$.
As a consequence, $u_i^{\rm{LL}}$ and $u_i^{\rm{L}}$ do not collide during the first phase, and we have
\begin{equation}    \label{eq:tech-proof-prop3-1}
u_i^{\rm{LL}}(\mathcal{T}_i) \leqslant u_i^{\rm{L}}(\mathcal{T}_i) - \frac{4 D}{5} = v_i - \frac{\eta}{2} - \frac{4 D}{5} \, .    
\end{equation}
In the second phase, we can have $u_i^{\rm{L}} \in [v_i, v_i+\eta]$ in which case $u_i^{\rm{LL}}$ becomes the neuron flanking $v_i$ to the left and $u_i^{\rm{L}}$ the neuron flanking to the right. Then \eqref{eq:final-control-delta-g} and Lemma \ref{lemma:description-gradient} give 
\begin{equation*}
    \frac{\diff u_i^{\rm{LL}}}{\diff \tau} \leqslant \frac{(u_i^{\rm{L}}(\tau) - v_i)^2 (\delta f )^2_i}{(u_i^{\rm{L}}(\tau) - u_i^{\rm{LL}}(\tau))^2} + \Delta g \leqslant \frac{16 \eta^2 M^2}{D^2} + \Delta g \, .
\end{equation*}
Of course, this bound also holds {when} $u_i^{\rm{L}} \in [v_i-\eta, v_i]$, because then $\frac{\diff u_i^{\rm{LL}}}{\diff \tau} \leqslant \Delta g$. Thus, in the second phase $\tau \in [\cT_i, \cT]$, by the previous upperbound and the fact that $u_i^{\rm{L}}(\tau) \geqslant v_i - \frac{\eta}{2}$,
\begin{align*}
    u_i^{\rm{L}}(\tau) - u_i^{\rm{LL}}(\tau) &\geqslant v_i - \frac{\eta}{2} - \Big(u_i^{\rm{LL}}(\cT_i) + (\tau - \cT_i) \Big(\frac{16 \eta^2M^2}{D^2} + \Delta g\Big)\Big) \\
    &\geqslant \frac{4 D}{5} - \cT \Big(\frac{16 \eta^2M^2}{D^2} + \Delta g\Big) \, ,
\end{align*}
by \eqref{eq:tech-proof-prop3-1}.
Let us now upper-bound each of the last two terms by $\nicefrac{D}{10}$ to conclude. By definition of $D$, 
\[
\eta = \frac{(\Delta f)^2 D^2}{2^{13} (m+1) M^2} \, .
\]
Thus
\[
\frac{16 \eta^2M^2 \cT}{D^2} = \frac{3 (\Delta f)^2 D^2}{2^{21} (m+1)^2 M^2} \leqslant \frac{D}{10}
\]
using the definition of $\cT$, $D \leqslant \Delta(u(0)) \leqslant 1$, $\Delta f \leqslant 2M$ and $m+1 \geqslant 1$.
Finally, $\cT \Delta g = \nicefrac{D}{10}$.
Thus $u_i^{\rm{LL}}$ and $u_i^{\rm{L}}$ do not collide. 

We now show that $u_i^{\rm{L}}$ and $u_i^{\rm{R}}$ do not collide. In the first phase $\tau \in [0, \cT_i]$, we have
\begin{equation*}
    u_i^{\rm{R}}(\tau) - u_i^{\rm{L}}(\tau) \geqslant  u_i^{\rm{R}}(\tau) - v_i = d_i^{\rm{R}}(\tau) \geqslant  d_i^{\rm{R}}(\tau)-d_i^{\rm{L}}(\tau) \geqslant D \, .
\end{equation*}
As a consequence, $u_i^{\rm{L}}$ and $u_i^{\rm{R}}$ do not collide during the first phase, and we have
\begin{equation}    \label{eq:tech-proof-prop3-2}
u_i^{\rm{R}}(\mathcal{T}_i) \geqslant D + u_i^{\rm{L}}(\mathcal{T}_i) = D + v_i - \frac{\eta}{2} \, .    
\end{equation}
In the second phase, $u_i^{\rm{R}}$ plays a role symmetric to $u_i^{\rm{LL}}$: it can be, or not, the neuron closest to the right of $v_i$, depending on whether $u_i^{\rm{L}} \in [v_i-\eta, v_i]$ or $u_i^{\rm{L}} \in [v_i, v_i+\eta]$. As for $u_i^{\rm{LL}}$, we can show that in any case, for $\tau \in [\cT_i, \cT]$, 
\begin{equation*}
\frac{\diff u_i^{\rm{R}}}{\diff \tau} \geqslant - \frac{16 \eta^2 M^2}{D^2 } - \Delta g \, .
\end{equation*}
Thus one concludes as before: for $\tau \in [\cT_i, \cT]$, by the previous lowerbound and the fact that $u_i^{\rm{L}}(\tau) \leqslant v_i + \frac{\eta}{2}$,
\begin{align*}
    u_i^{\rm{R}}(\tau) - u_i^{\rm{L}}(\tau) 
    &\geqslant u_i^{\rm{R}}(\cT_i) - (\tau - \cT_i)\Big(\frac{16 \eta^2 M^2}{D^2} + \Delta g\Big) - \left(v_i + \frac{\eta}{2}\right).
\end{align*}
Then, by \eqref{eq:tech-proof-prop3-2},
\begin{align*}
    u_i^{\rm{R}}(\tau) - u_i^{\rm{L}}(\tau) 
    &\geqslant D  - \eta -\cT \Big(\frac{16 \eta^2 M^2}{D^2} + \Delta g\Big) > \frac{D}{2} \, ,
\end{align*}
where the last lower-bound unfolds similarly as for $u_i^{\rm{LL}}$ and $u_i^{\rm{L}}$.
Thus there is no collision between $u_i^{\rm{L}}$ and $u_i^{\rm{R}}$. 

The reader can check that all other pairs of neurons do not collide, including those involving $u_0 = -\eta/2$ and $u_{m+1} = 1 + \eta/2$. In fact, the proof is {easier than} for $u_i^{\rm{LL}}, u_i^{\rm{L}}$ and $u_i^{\rm{L}}, u_i^{\rm{R}}$ because the discontinuity at $v_i$ attracts these neurons together. 

Furthermore, we proved that before time $\mathcal{T}$ at most one neuron can escape on each side of a piece $[v_i, v_{i+1}]$ of $f$. Since we start with at least four (and even six) neurons per piece, there is always before $\mathcal{T}$ at least two neurons per piece.

This shows that $\mathcal{T} < \overline{\mathcal{T}}$, and we also proved that at time $\mathcal{T}$, all discontinuities have finished their first phase, hence there is a neuron at distance less than $\eta$ from each discontinuity of the target function.

\subsection{Proof of Theorem \ref{thm:final-fs-cv}}

Take $C = 2^{-19}$. Then by assumption of Theorem \ref{thm:final-fs-cv},
\[
\eta \leqslant \frac{\delta^2 (\Delta f)^2}{2^{19} M^2 (m+1)^5} \, .
\]
Moreover, by the definition of $D$ from Proposition \ref{prop:final-control-slow-system},
\[
\eta = \frac{(\Delta f)^2 D^2}{2^{13} M^2 (m+1)} \, .
\]
This implies that
\[
D^2 \leqslant \frac{\delta^2}{2^{6} (m+1)^4} \, ,
\]
and in consequence
\[
D \leqslant \frac{\delta}{6(m+1)^2} \, .
\]
Then Lemma \ref{lemma:good} shows that the initialization is $D$-good with probability at least $1-\delta$ (since the $D$-good property is monotonous in $D$).

Hence, with probability at least $1-\delta$, according to Proposition \ref{prop:final-control-slow-system}, the maximal solution to \eqref{eq:two-timescale-limit} is defined at least until $\mathcal{T}$ and at that time, there is a neuron at distance less than $\eta$ from each discontinuity of the target function. Furthermore, $3\eta \leq \frac{1}{m+1} \leq \frac{1}{n} \leq \Delta v$, hence Lemma \ref{lemma:final-error-at-convergence} applies. This implies that
\[
\int_0^1 |f^*(x) - f(x;a^*(u(\mathcal{T})), u(\mathcal{T}))|^2 \diff x \leqslant 6M^2\eta n \, .
\]
The assumption on $\eta$ allow to conclude that the upper-bound is less than $\xi$.

\begin{remark}
    We did not try to optimize the value of $C$ since our goal was to show convergence to a global optimum and the dependency of the dynamics on the parameters (for instance, it is remarkable that $\cT$ does not depend on $\xi$).
\end{remark}

\subsection{Proof of Proposition \ref{prop:final-control-gf-fast}}

For $s \leq t$, Proposition \ref{prop:unicity-minimizer} holds since for $\Delta(u(s)) \geq 16\eta > 2 \eta$. Thus $a_\eta^*(u(s))$ is well-defined and verifies
\begin{equation*}
\nabla_a L_\eta (a_\eta^*(u(s)), u(s)) = 0 \, .
\end{equation*}
Let, for $s \leq t$, $V(s) = \|a(s) - a_\eta^*(u(s))\|$.
Recall that, by \eqref{eq:def-gradient-a},
\[
\nabla_a L_\eta (a, u) = H_\eta(u) a - b_\eta(u) \, .
\]
Hence, for $s \leq t$,
\begin{align*}
\langle a(s) - a_\eta^*(u(s)), &\nabla_a L_\eta (a(s), u(s)) \rangle \\
&= \langle a(s) - a_\eta^*(u(s)), \nabla_a L_\eta (a(s), u(s)) - \nabla_a L_\eta (a_\eta^*(u(s)), u(s)) \rangle \\
&= \langle a(s) - a_\eta^*(u(s)), H_\eta(u(s)) (a(s) - a_\eta^*(u(s))) \rangle \\
&\geqslant \frac{\Delta(u(s))}{8} V(s)^2 \\
&\geq \frac{D}{16} V(s)^2 \, , 
\end{align*}
where the first lower bound is a consequence of Proposition \ref{prop:unicity-minimizer}. Then we have, for any $s \leq t$,
\begin{align*}
\frac{\diff }{\diff s}\Big(\frac{1}{2}V(s)^2\Big) &= \Big\langle a(s) - a_\eta^*(u(s)), \frac{\diff a}{\diff s}(s) - \frac{\diff }{\diff s} a_\eta^*(u(s)) \Big\rangle \\
&= \Big\langle a(s) - a_\eta^*(u(s)), - \nabla_a L_\eta (a(s), u(s)) - \frac{\diff }{\diff s} a_\eta^*(u(s)) \Big\rangle \\
&\leqslant - \frac{D}{16} V(s)^2 + \Big\|\frac{\diff }{\diff s} a_\eta^*(u(s))\Big\| V(s) \, .
\end{align*}
Let us now upper bound the norm appearing in the second term. We first have by the chain rule
\[
\frac{\diff }{\diff s} a_\eta^*(u(s)) = \frac{\partial a_\eta^*}{\partial u} (u(s)) \frac{\diff u}{\diff s}(s) \, .
\]
By Lemma \ref{lemma:bound-change-speed-a-star} (which holds since for $\Delta(u(s)) \geq 16 \eta > 2 \eta$),
\[
\Big\| \frac{\partial a_\eta^*}{\partial u} (u(s)) \Big\| \leqslant \frac{8}{\Delta(u(s))} \big(2(m+1) \|a_\eta^*(u(s))\| + M \big) \, .
\]
Besides,
\[
\Big\|\frac{\diff u}{\diff s}(s)\Big\| \leq \varepsilon \|\nabla_u L_\eta (a(s), u(s))\| \, .
\]
By Lemma \ref{lemma:loss-locally-a-lip},
\begin{equation}    \label{eq:tech-proof-maj-grad-u}
\|\nabla_u L_\eta (a(s), u(s))\|  \leqslant  \sqrt{m+1} \|a(s)\|^2 + M \|a(s)\| \, .    
\end{equation}
Furthermore,
\begin{equation*} 
\|a(s)\| \leq \|a_\eta^*(u(s))\| + \|a(s) - a_\eta^*(u(s))\| = \|a_\eta^*(u(s))\| + V(s) \, . 
\end{equation*}
By Lemmas \ref{lemma:upper-bound-gradient} and \ref{lemma:control-solution-fast-system}, which apply since $\Delta(u(s)) > 2 \eta$ and since there are at least two positions $u_j(s)$ in each interval $[v_i, v_{i+1}]$ for $s \leq t$,
\begin{align*}
\|a_\eta^*(u(s))\| &\leqslant \|a_0^*(u(s))\| + \|a_0^*(u(s))-a_\eta^*(u(s))\| \\
&\leqslant 2 M \sqrt{m+1} + \frac{16 M \sqrt{m+1} \eta}{\Delta(u(s))} \\
&\leqslant 2 M \sqrt{m+1} + \frac{32 M \sqrt{m+1} \eta}{D} \\
&\leqslant 3 M \sqrt{m+1} \, ,
\end{align*}
where the last upper bound is implied by the assumption $D \geq 32 \eta$. 

Now define $T_{\max} = \inf \big\{s \geqslant 0, V(s) > 3M\sqrt{m+1} \big\}$ and assume $s \leq \min(t,T_{\max})$ so that $V(s) \leq 3 M \sqrt{m+1}$. Then we proved that $\|a(s)\| \leq 6 M \sqrt{m+1}$. Going back to \eqref{eq:tech-proof-maj-grad-u}, we deduce that
\begin{equation}    \label{eq:tech-upper-maj-norm-grad-u}
\|\nabla_u L_\eta (a(s), u(s))\| \leqslant 36 M^2 (m+1)^{3/2} + 6 M^2 \sqrt{m+1} \leqslant 2^6 M^2 (m+1)^{3/2} \, .    
\end{equation}
Putting everything together, we obtain
\begin{align*}
\Big\|\frac{\diff }{\diff s}a_\eta^*(u(s))\Big\| \leqslant \frac{2^9 M^2 (m+1)^{3/2}}{\Delta(u(s))} \big(6 M (m+1)^{3/2} + M \big) \varepsilon \leq \frac{2^{13} M^3 (m+1)^3}{D} \varepsilon \, .    
\end{align*}
All in all,
\[
\frac{\diff }{\diff s}\Big(\frac{1}{2}V(s)^2\Big) \leqslant - \frac{D}{16} V(s)^2 + \frac{2^{13} M^3 (m+1)^3}{D} \varepsilon V(s) \, .
\]
Hence
\[
\frac{\diff }{\diff s}(V(s)) = \frac{1}{V(s)} \frac{\diff }{\diff s}\big(\frac{1}{2}V(s)^2\big) \leqslant - \frac{D}{16} V(s) + \frac{2^{13} M^3 (m+1)^3}{D} \varepsilon \, .
\]
By Grönwall's inequality, for all $s \leqslant \min(t, T_{\max})$,
\begin{align}
V(s) &\leqslant \exp^{- \frac{D}{16} s} V(0) + \frac{2^{17} M^3 (m+1)^3}{D^2} \varepsilon (1 - \exp^{- \frac{D}{16} s}) \label{eq:tech-proof-maj-gronwall} \\
&\leqslant \exp^{- \frac{D}{16} s} V(0) + \frac{2^{17} M^3 (m+1)^3}{D^2} \varepsilon \, . \label{eq:tech-proof-maj-gronwall-2}
\end{align}
Finally note that $V(0) = \|a_\eta^*(0)\| \leqslant 2 M \sqrt{m+1}$ and $\frac{2^{17} M^3 (m+1)^3 \varepsilon}{D^2} \leqslant 2 M \sqrt{m+1}$ by the assumption of the Proposition on $\varepsilon$. Hence \eqref{eq:tech-proof-maj-gronwall} implies that for all $s \leqslant \min(t, T_{\max})$, $V(s)$ is a (weighted) average of two terms less than $2 M \sqrt{m+1}$ hence it is less than $2 M \sqrt{m+1}$. This shows that $T_{\max} \geqslant t$, which concludes the proof since \eqref{eq:tech-proof-maj-gronwall-2} is then valid for $s=t$.

\subsection{Proof of Theorem \ref{thm:main}}
\label{apx:proof:thm:main}

In the proof, we take $C_1 = 2^{-21}$ and $C_2 = 2^{-36}$.
Denote 
\[
D = \frac{\delta}{6(m+1)^2} \, .
\]
Lemma \ref{lemma:good} shows that the initialization is $D$-good with probability at least $1-\delta$. In the following, we study the case where this event happens.

Denote $\overline{T}$ the minimal time $t > 0$ such that $\Delta(u(t)) \leqslant \nicefrac{D}{2}$ or there are less than two neurons in some piece $[v_i, v_{i+1}]$ of $f^*$ or $\|a(t)\| > 7 M \sqrt{m+1}$. Note that $\overline{T} > 0$ since the initialization is $D$-good.
By Lemma \ref{lemma:loss-locally-a-lip}, $\nabla_u L_\eta$ and $\nabla_a L_\eta$ are Lipschitz-continuous on compacts, hence the solution of the gradient flow is well defined for $t < \overline{T}$ since $\overline{T}$ defines a compact set of parameters. 

Then all the assumptions of Proposition~\ref{prop:final-control-gf-fast} are satisfied on the time interval $[0, t]$ for any $t < \overline{T}$. More precisely, the assumptions that do not come directly from the definition of $\overline{T}$ are the lower bound for $D$ and the upper bound for $\varepsilon$. The lower bound for $D$ come from
\begin{equation}    \label{eq:lower-bound-d}
D = \frac{\delta}{6(m+1)^2} \geq 32 \eta    
\end{equation}
by \eqref{eq:conditions} and the simple bounds $\delta \leq 1$, $\Delta f \leq 2 M$, $m+1 \geq 1$.
The upper bound for $\varepsilon$ comes from \eqref{eq:conditions} since
\[
    \varepsilon \leq \frac{\delta^3 (\Delta f)^2}{2^{36}M^4 (m+1)^{19/2}} \leq \frac{\delta^2}{36 \cdot 2^{16} M^2 (m+1)^{13/2}} = \frac{D^2}{2^{16} M^2 (m+1)^{5/2}} \, ,
\]
where the second upper bound uses $m \geq 0$, $\delta \leq 1$ and $\Delta f \leq 2 M$.
Therefore, according to Proposition~\ref{prop:final-control-gf-fast},
\begin{equation}    \label{eq:final-diff-a-gf}
\|a(t) - a_\eta^*(u(t))\| \leqslant 3 M \sqrt{m+1} \exp^{- \frac{D}{16} t} + \frac{2^{17} M^3 (m+1)^3}{D^2} \varepsilon \, ,
\end{equation}
Furthermore, the proof of Proposition~\ref{prop:final-control-gf-fast} actually implies that 
\begin{equation}    \label{eq:tech-upper-bound-a}
\|a_\eta^*(u(t))\| \leqslant 3M \sqrt{m+1} \quad \textnormal{and} \quad \|a(t)\| \leqslant 6M \sqrt{m+1}.
\end{equation}
The second bound implies that the condition $\|a(t)\| > 7 M \sqrt{m+1}$ in the definition of $\overline{T}$ is actually never active.
Let us distinguish between two phases: letting
\[
T_0 = \frac{16}{D} \log \Big( \frac{2^{16} M^2 (m+1)^3}{\delta (\Delta f)^2} \Big) = \frac{96 (m+1)^2}{\delta} \log \Big( \frac{2^{16} M^2 (m+1)^3}{\delta (\Delta f)^2} \Big),
\]
then the first phase corresponds to $t \leq T_0$ and the second phase for $t \geq T_0$. 

\paragraph{Analysis of the first phase.} In the first phase, each neuron moves at most by
\begin{align*}
\varepsilon T_0 \max_j \Big|\frac{\partial L_\eta}{\partial u_j} (a(t), u(t))\Big| \leqslant \varepsilon T_0 \|\nabla_u L_\eta (a(s), u(s))\| \leqslant 2^6 \varepsilon T_0 M^2 (m+1)^{3/2} \, ,
\end{align*}
where the second upper bound comes from \eqref{eq:tech-upper-maj-norm-grad-u} in the proof of Proposition~\ref{prop:final-control-gf-fast}.
This quantity is less than $\frac{D}{8}$ if
\[
\frac{6144 (m+1)^{7/2} M^2}{\delta} \log \Big( \frac{2^{16} M^2 (m+1)^3}{\delta (\Delta f)^2} \Big) \varepsilon \leqslant \frac{\delta}{48 (m+1)^2} \, .
\]
Let us check this condition: we have
\begin{align*}
\frac{6144 (m+1)^{7/2} M^2}{\delta} \log \Big( &\frac{2^{16} M^2 (m+1)^3}{\delta (\Delta f)^2} \Big) \varepsilon \\
&= \frac{16 \cdot 6144 (m+1)^{7/2} M^2}{\delta} \log \Big( \frac{2 M^{1/8} (m+1)^{3/16}}{\delta^{1/16} (\Delta f)^{1/8}} \Big) \varepsilon \\
&\leqslant \frac{16 \cdot 6144 (m+1)^{7/2} M^2}{\delta} \log \Big( \frac{4 M (m+1)}{\delta \Delta f} \Big) \varepsilon \, ,
\end{align*}
since $m + 1 \geq 1$, $\delta \leq 1$, and $2M/\Delta f \geq 1$, hence $(2M/\Delta f)^{1/8} \leq 2M/\Delta f$.
Next, upper-bounding $\log(x)$ by $x$, we have, by \eqref{eq:conditions},
\begin{align*}
\frac{768 (m+1)^{7/2} M^2}{\delta} \log \Big( \frac{2^{16} M^2 (m+1)^3}{\delta (\Delta f)^2} \Big) \varepsilon &\leqslant \frac{64 \cdot 6144 (m+1)^{9/2} M^3}{\delta^2 \Delta f} \varepsilon \\
&\leqslant \frac{6144 \delta (\Delta f)}{2^{29} M (m+1)^5}  \\
&\leqslant \frac{\delta}{48 (m+1)^2}
\end{align*}
using $\Delta f \leq 2M$ and $m \geq 0$. Note that the upper bound $2^6 \varepsilon T_0 M^2 (m+1)^{3/2} \leq \nicefrac{D}{8}$ also implies that
\begin{equation}    \label{eq:T0}
T_0 \leq \frac{D}{2^9 \varepsilon M^2 (m+1)^{3/2}} \leq \frac{1}{2 \varepsilon (\Delta f)^2} = \frac{T}{12}    
\end{equation}
since $m \geq 0$, $D \leq 1$ and $\Delta f \leq 2 M$.
Since each neuron moves by at most $D/8$ in the time interval $[0, T_0]$ and since $\Delta(u(0)) \geq D$, we deduce that 
\begin{equation}    \label{eq:condition1}
\Delta(u(T_0)) \geq \frac{3}{4}D.
\end{equation}
Similarly, by condition \ref{it:ass-distance} of the definition of a $D$-good vector, for all discontinuities $v_i$,
\[
|u_i^{\rm{R}}(0) + u_i^{\rm{L}}(0) - 2v_i| \geqslant D,
\]
thus 
\begin{equation}    \label{eq:condition2}
|u_i^{\rm{R}}(T_0) + u_i^{\rm{L}}(T_0) - 2v_i| \geqslant \frac{3}{4} D \, .    
\end{equation}
Furthermore, there at least four neurons on each piece of $f$ at $T_0$, because at most one neuron can move out of each piece by either side between $0$ and $T_0$.

\paragraph{Analysis of the second phase.} Let
\[
\Delta a = \frac{D (\Delta f)^2}{2^9 M \sqrt{m+1}} = \frac{\delta (\Delta f)^2}{6 \cdot 2^9 M (m+1)^{5/2}} \, .
\]
In the second phase $t \geq T_0$, we are able to control by $\Delta a$ the distance between $a(t)$ and the weights~$a_0^*(u(t))$ that are the best approximation of $f^*$ by a piecewise affine function with subdivision~$u(t)$. To show this, first note that the first term in \eqref{eq:final-diff-a-gf} is smaller than $\frac{\Delta a}{4}$ when
\[
3 M \sqrt{m+1} \exp^{- \frac{D}{16} t} \leqslant \frac{\Delta a}{4} \, .
\]
which is equivalent to
\[
t \geqslant \log \Big( \frac{12 M \sqrt{m+1}}{\Delta a} \Big) \frac{16}{D} \, .
\]
which is implied by $t \geqslant T_0$.
Furthermore, the second term in \eqref{eq:final-diff-a-gf} is smaller than $\frac{\Delta a}{4}$ because, by definition of $D$ and by \eqref{eq:conditions},
\[
\frac{2^{17} M^3 (m+1)^3}{D^2} \varepsilon = \frac{36 \cdot 2^{17} M^3 (m+1)^7}{\delta^2} \varepsilon \leq \frac{6^2 \delta (\Delta f)^2}{2^{19} M (m+1)^{5/2}} = \frac{6^3 \Delta a}{2^{10}} \leq \frac{\Delta a}{4} \, .
\]
Hence, for all $T_0 \leqslant t < \overline{T}$,
\[
\|a(t) - a_\eta^*(u(t))\| \leqslant \frac{\Delta a}{2} \, .
\]
Furthermore, note that the assumption of Lemma \ref{lemma:control-solution-fast-system} applies for $t < \overline{T}$ since $\Delta(u(t)) \geq \frac{D}{2} > 2 \eta$ by~\eqref{eq:lower-bound-d}.
Therefore, by Lemma \ref{lemma:control-solution-fast-system} and by \eqref{eq:conditions}, 
\begin{align*}
\|a_\eta^*(u(t)) - a_0^*(u(t))\| &\leqslant \frac{2^4 M \sqrt{m+1}}{\Delta(u(t))} \eta \\
&\leq \frac{2^5 M \sqrt{m+1}}{D} \eta \\
&= \frac{2^5 \cdot 6 M (m+1)^{5/2}}{\delta} \eta \\
&\leq \frac{6 \delta (\Delta f)^2}{2^{16} M (m+1)^{5/2}} \\
&= \frac{6^2 \Delta a}{2^7} \leqslant \frac{\Delta a}{2} \, .   
\end{align*}
By the triangular inequality, we deduce the upper bound that we were after, that is
\[
\|a(t) - a_0^*(u(t))\| \leqslant \Delta a \, .
\]
As in the proof of Proposition \ref{prop:final-control-slow-system}, we can now control the distance between the true dynamics and the one that we would have if the weights were equal to $a_0^*(u)$. Namely, for any $T_0 \leq t \leq \overline{T}$ and $j \in \{1, \dots, m\}$, by Lemma~\ref{lemma:upper-bound-gradient} (which applies since $\Delta(u(t)) > 2\eta$ by \eqref{eq:lower-bound-d}), we have
\begin{align*}  
&\Big|\frac{\diff u_j}{\diff t}(t) + \frac{\partial L_\eta}{\partial u_j}(a_0^*(u(t)), u(t))\Big| \\
&\qquad= 
 \Big|\frac{\partial L_\eta}{\partial u_j}(a(t), u(t)) - \frac{\partial L_\eta}{\partial u_j}(a_0^*(u(t)) , u(t))\Big|  \nonumber \\
 &\qquad\leqslant 2 M (\sqrt{m+1} + 1) \|a(t) - a_0^*(u(t))\| + \sqrt{m+1} \|a(t) - a_0^*(u(t))\|^2\, .
\end{align*}
The first term is less than
\[
2M(\sqrt{m+1} + 1) \Delta a = \frac{(\sqrt{m+1} + 1) D (\Delta f)^2}{2^8 \sqrt{m+1}} \leqslant \frac{D (\Delta f)^2}{2^7} \, ,
\]
and the second term is less than
\begin{align*}
    \sqrt{m+1} (\Delta a)^2 = \frac{D^2 (\Delta f)^4}{2^{18}M^2\sqrt{m+1}} \leqslant \frac{D (\Delta f)^2}{2^{16}} \, ,
\end{align*}
using $D \leqslant \Delta(u(0)) \leqslant 1$, $\Delta f \leqslant 2M$ and $m+1 \geqslant 1$.
Hence we obtain, for any $T_0 \leq t \leq \overline{T}$ and $j \in \{1, \dots, m\}$,
\begin{equation*}    
 \Big|\frac{\diff u_j}{\diff t}(t) + \frac{\partial L_\eta}{\partial u_j}(a_0^*(u(t)), u(t))\Big| \leqslant \frac{D (\Delta f)^2}{120} \, . 
\end{equation*}
We are therefore in a situation very similar to the proof of Proposition \ref{prop:final-control-slow-system}, starting from \eqref{eq:final-control-delta-g}. One can check that all the arguments used in the proof also apply here. 
On top of the estimate above that resembles \eqref{eq:final-control-delta-g}, the crucial facts that make the argument of Proposition \ref{prop:final-control-slow-system} work here are the bounds \eqref{eq:condition1} and \eqref{eq:condition2} as well as the fact that there are at least four neurons on each piece $f$ at $T_0$, which together are very similar to the conditions ensuring that $u(0)$ is $D$-good in the proof of Proposition \ref{prop:final-control-slow-system}. Another key point is \eqref{eq:T0}, ensuring that a time at least equal to $11T/12$ remains after the first phase of this proof, which is enough time for the dynamics described in the proof of Proposition \ref{prop:final-control-slow-system} to unfold.

This yields that $T < \overline{T}$, and that at time $T$, there is a neuron at distance less than $\eta$ from each discontinuity of $f^*$. Furthermore, $3\eta \leq \frac{1}{m+1} \leq \frac{1}{n} \leq \Delta v$, hence Lemma \ref{lemma:final-error-at-convergence} applies.
Thus
\[
\int_0^1 (f_\eta(x;a_\eta^*(u(T)), u(T)) - f^*(x))^2 \diff x \leqslant 6 M^2 \eta n \leqslant \frac{\xi}{2} \, ,
\]
where the second upper bound comes from $n \leq m+1$ and from \eqref{eq:conditions}.
Furthermore, by \eqref{eq:tech-upper-bound-a} and by Lemma \ref{lemma:loss-locally-a-lip}, 
\begin{align*}
|L_\eta(a(T), u(T)) -  L_\eta(a_\eta^*(u(T)), u(T))| &\leqslant \sqrt{m+1}(6M(m+1) + M) \|a(T) - a_\eta^*(u(T))\| \\
&\leq 16 M (m+1)^{3/2} \|a(T) - a_\eta^*(u(T))\| \, .    
\end{align*}
Let us show that this term is less than $\nicefrac{\xi}{4}$.
Recall that, by \eqref{eq:final-diff-a-gf},
\[
\|a(T) - a_\eta^*(u(T))\| \leqslant 3 M \sqrt{m+1} \exp^{- \frac{D}{16} T} + \frac{2^{17} M^3 (m+1)^3}{D^2} \varepsilon \, .
\]
By definition of $D$ and $T$, by using $\exp(-x) \leq \nicefrac{1}{x}$ for $x \geq 1$ and by \eqref{eq:conditions},
\begin{align*}
16 M (m+1)^{3/2} \cdot 3 M \sqrt{m+1} \exp^{- \frac{D}{16} T} &= 48 M^2 (m+1)^2 \exp \Big(- \frac{\delta}{16 (m+1)^2 (\Delta f)^2 \varepsilon} \Big) \\
&\leq \frac{48 \cdot 16 M^2 (m+1)^4 (\Delta f)^2}{\delta} \varepsilon \\
&\leq \frac{48 (\Delta f)^2 \delta}{2^{31} M^2 (m+1)^{9/2}} \xi \\
&\leqslant \frac{\xi}{8}  
\end{align*}
using $\Delta f \leqslant 2M$, $\delta \leq 1$, and $m+1 \geqslant 1$.
Furthermore, by \eqref{eq:conditions}, we get that
\[
16 M (m+1)^{3/2} \cdot \frac{2^{17} M^3 (m+1)^3}{D^2} \varepsilon = \frac{36 \cdot 2^{21} M^4 (m+1)^{17/2}}{\delta^2} \varepsilon \leqslant \frac{\xi}{8} \, .
\]
We therefore obtain the sought $\nicefrac{\xi}{4}$ upper-bound and can conclude that
\begin{align*}
\int_0^1 (f_\eta(x;a(T), u(T)) - f^*(x))^2 \diff x &\leqslant \int_0^1 (f_\eta(x;a_\eta^*(u(T)), u(T)) - f^*(x))^2 \diff x \\
&+ 2 |L_\eta(a(T), u(T)) -  L_\eta(a_\eta^*(u(T)), u(T))| \\
&\leqslant \xi \, .
\end{align*}

\section{Experimental details and additional experiments}
\label{apx:experiments}

Our code is available at \url{https://github.com/PierreMarion23/two-timescale-nn}. 

\paragraph{Numerical illustration in the setting of Section 2.} To obtain Figures \ref{fig:fast-slow-comparison-with-limit} and \ref{fig:fast-slow-simulation}, we use the parameters of Table \ref{tab:params1}. For Figure \ref{fig:same-rate-simulation}, we use the parameters of Table \ref{tab:params2}.

\begin{table}[ht]
\begin{minipage}[t]{0.49\textwidth}
    \centering
    \begin{tabular}{cc}
    \toprule
    {\bf Name} & {\bf Value} \\
    \midrule
    $m$ & $20$ \\
    $\varepsilon$ & $2 \cdot 10^{-5}$ \\
    $\eta$ & $4 \cdot 10^{-3}$ \\
    $P$ & $1.8 \cdot 10^8$ \\
    $h$ & $10^{-5}$ \\
    Additive noise & Uniform on $[-1, 1]$ \\
    \bottomrule
    \end{tabular}
    \medskip
    \caption{Parameters of Figures \ref{fig:fast-slow-comparison-with-limit} and \ref{fig:fast-slow-simulation}.}
\label{tab:params1}
\end{minipage}
\begin{minipage}[t]{0.49\textwidth}
    \centering
    \begin{tabular}{cc}
    \toprule
    {\bf Name} & {\bf Value} \\
    \midrule
    $m$ & $20$ \\
    $\varepsilon$ & $1$ \\
    $\eta$ & $4 \cdot 10^{-3}$ \\
    $P$ & $10^6$ \\
    $h$ & $10^{-5}$ \\
    Additive noise & Uniform on $[-1, 1]$ \\
    \bottomrule
    \end{tabular}
    \medskip
    \caption{Parameters of Figure \ref{fig:same-rate-simulation}.}
\label{tab:params2}
\end{minipage}
\end{table}

Our target function is defined by $f^* = 1$ on $[0., 0.2], [0.35, 0.5], [0.65, 0.8]$, $f^* = 2$ on $[0.5, 0.65]$ and $f^* = 4$ elsewhere. The activation function is a piecewise cubic polynomial defined by $x \mapsto \min(\max(4(x+0.5)^3, 0), 0.5) + \min(\max(0.5 - 4(0.5-x)^3, 0), 0.5)$.

We re-run the same SGD experiment as above twenty times, and plot in Figure \ref{fig:barplot-epsilon} the average $L2$ distance to the target as a function of $\varepsilon$, averaging over the initialization randomness and SGD randomness. This confirms that, in our setting, the SGD is able to recover the target function in the two-timescale regime ($\varepsilon \ll 1$), but fails outside of the two-timescale regime ($\varepsilon = 1$). The transition between the two regimes seems to occur for $\varepsilon \approx 0.1$.

\begin{figure}[ht!]
     \centering
     \includegraphics[width=0.5\textwidth]{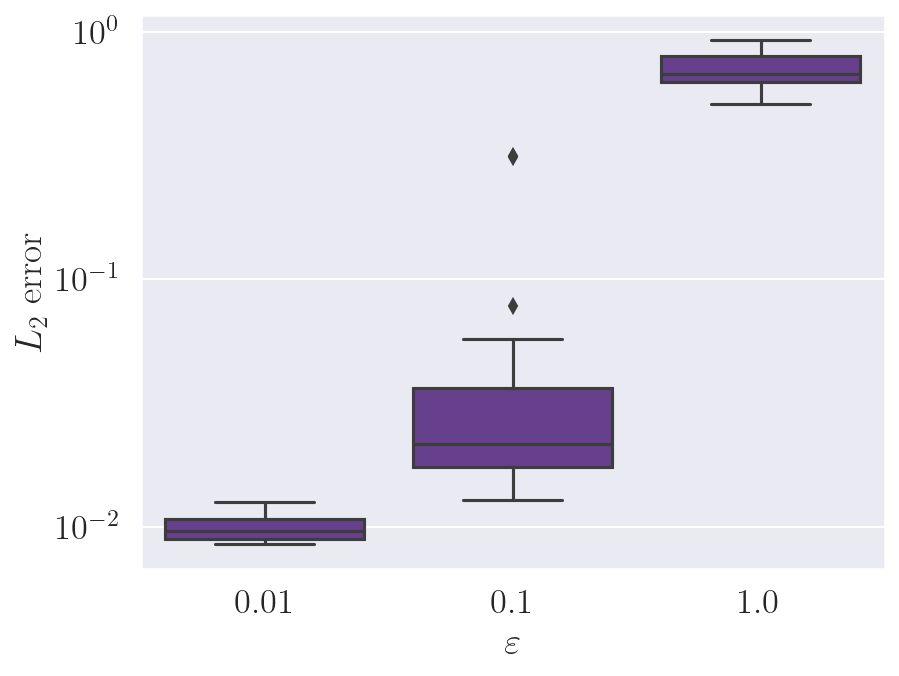}
     \caption{$L_2$ distance with the target as a function of $\varepsilon$ in the 1D experiment, with $20$ repeats}
   \label{fig:barplot-epsilon}
\end{figure}

The number of iterations in Table \ref{tab:params1} is much larger than the one in Table \ref{tab:params2}. There are two levels of analysis to explain this difference. The most straightforward reason is that the positions $u$ evolve at a speed $\varepsilon h$, which is much smaller in Table \ref{tab:params1}. However, one may note that it is not larger by a factor $1/\varepsilon = 50,000$ but ``only'' by a factor $\simeq 200$. To understand why, we have to get more into the details of the dynamics, to understand the order of magnitude of the number of steps required before convergence. In the two-timescale regime, the limiting factor for convergence is the movement of the positions $u$. At each step, they move by an order of $\varepsilon h \simeq 10^{-10}$. The positions need to move on a scale of $5 \cdot 10^{-2}$ to align with the discontinuities of the target. Hence the required number of steps is 
$0.05/(2 \cdot 10^{-10}) \simeq 2 \cdot 10^8$. In the standard regime, on the contrary, the limiting factor for convergence is the movement of the weights $a$. They move by an order of $h \simeq 10^{-5}$ at each step, and they need to move by a distance of $\simeq 5$, necessitating $\simeq 5 \cdot 10^5$ steps to achieve convergence.

Finally, note that it is possible to increase $h$ in Table \ref{tab:params1} while keeping the same behavior (in our experiment, $h$ is kept to the same value as in Table \ref{tab:params2} in order to facilitate the comparison).
More precisely, taking $h = 10^{-3}$ in Table \ref{tab:params1} yields similar results while dividing the computational cost by~$100$.

\paragraph{Higher dimensions.} In 2D, we use the parameters of Table \ref{tab:params2d-standard} and \ref{tab:params2d-tt}. The positions are initialized uniformly over $[0, 1]$ and the weights uniformly over $[0, 3]$. The target is 
$$f^*(x, y) = \mathbf{1}_{x \geq 0.3} + \mathbf{1}_{x \geq 0.5} + \mathbf{1}_{x \geq 0.7} + \mathbf{1}_{y \geq 0.3} + \mathbf{1}_{y \geq 0.5} + \mathbf{1}_{y \geq 0.7}.$$
The activation function is the same as in the one-dimensional case. We use SGD with batch size $10^3$.

\begin{table}[H]
\begin{minipage}[t]{0.5\textwidth}
    \centering
    \begin{tabular}{cc}
    \toprule
    {\bf Name} & {\bf Value} \\
    \midrule
    $m$ & $10$ \\
    $\varepsilon$ & $1$ \\
    $\eta$ & $10^{-2}$ \\
    $P$ & $300$ \\
    $h$ & $1.0$ \\
    Additive noise & None \\
    \bottomrule
    \end{tabular}
    \medskip
    \caption{Parameters of Figure \ref{plota}.}
\label{tab:params2d-standard}
\end{minipage}
\begin{minipage}[t]{0.5\textwidth}
    \centering
    \begin{tabular}{cc}
    \toprule
    {\bf Name} & {\bf Value} \\
    \midrule
    $m$ & $10$ \\
    $\varepsilon$ & $10^{-2}$ \\
    $\eta$ & $10^{-2}$ \\
    $P$ & $5 \cdot 10^3$ \\
    $h$ & $1.0$ \\
    Additive noise & None \\
    \bottomrule
    \end{tabular}
    \medskip
    \caption{Parameters of Figure \ref{plotb}.}
\label{tab:params2d-tt}
\end{minipage}
\end{table}

As in the one-dimensional case, we re-run the experiment twenty times and report the results in Figure \ref{fig:barplot-epsilon-2d}.

We perform a similar experiment in 10D, using the same setup. The target is 
$$f^*(x_1, \dots, x_{10}) = \sum_{i=1}^{10} \mathbf{1}_{x_i \geq 0.3} + \mathbf{1}_{x_i \geq 0.5} + \mathbf{1}_{x_i \geq 0.7}.$$
We use SGD with batch size $10^4$. The results in terms of $L_2$ distance are reported in Figure \ref{fig:barplot-epsilon-10d}.

\begin{figure}[h!]
    \hfill
    \begin{subfigure}[t]{0.49\textwidth}
        \centering
        \includegraphics[width=\textwidth]{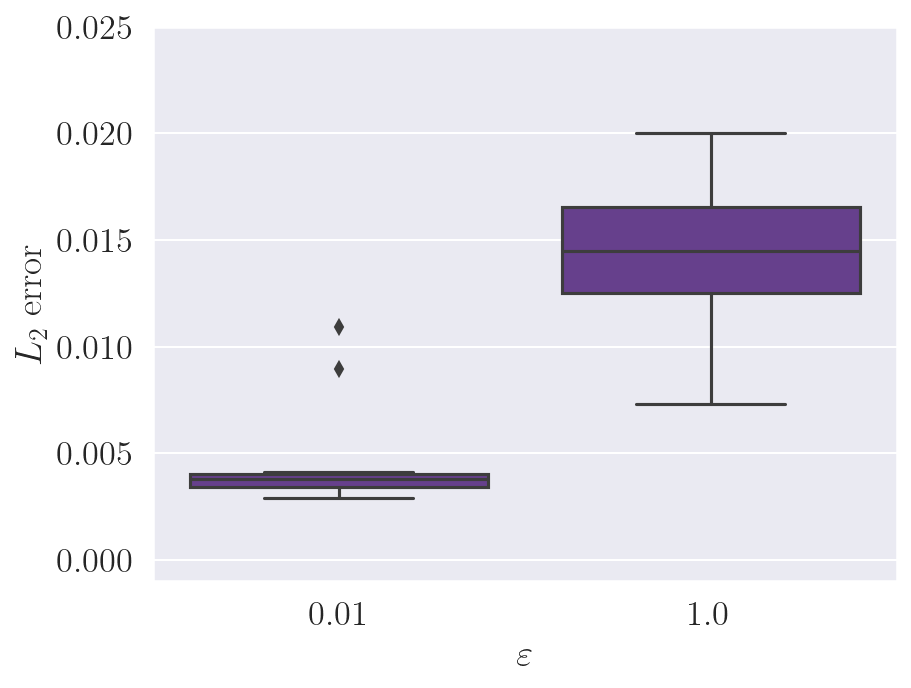}
        \caption{In 2D}
        \label{fig:barplot-epsilon-2d}
    \end{subfigure}
    \hfill
    \begin{subfigure}[t]{0.49\textwidth}
        \centering
        \includegraphics[width=\textwidth]{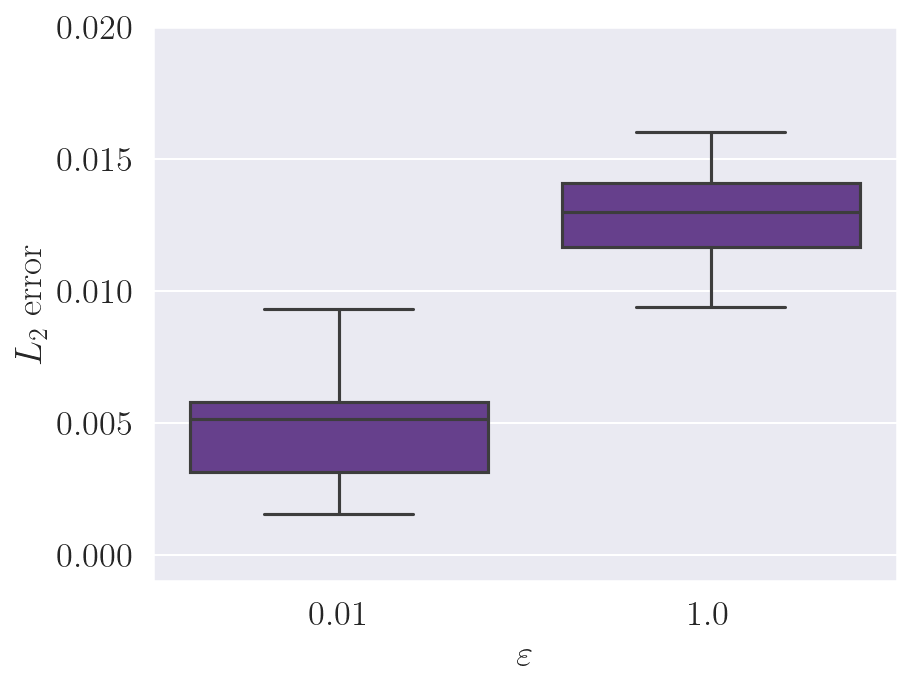}
        \caption{In 10D}
        \label{fig:barplot-epsilon-10d}
    \end{subfigure}
    \hfill
    \caption{$L_2$ distance with the target as a function of $\varepsilon$ in the higher-dimensional experiment, with $20$ repeats.}
\end{figure}

\paragraph{ReLU networks.} The goal of this experiment is to use ReLU activations to approximate piecewise-affine targets in 1D. The target is a ReLU network with positions $u = [0.3, 0.5, 0.7]$ and weights $a = [1.0, -2.0, 3.0]$. We use the same setup as for the 2D experiment described above. The results are reported in Figure \ref{fig:relu}.

\begin{figure}[h!]
    \hfill
    \begin{subfigure}[t]{0.32\textwidth}
        \centering
        \includegraphics[width=\textwidth]{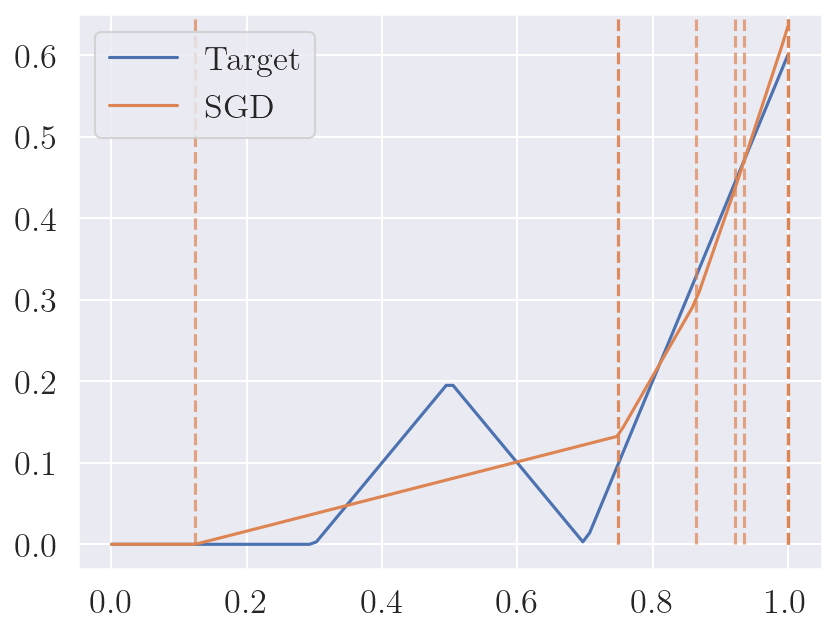}
        \caption{After training in the standard regime}
    \end{subfigure}
    \hfill
    \begin{subfigure}[t]{0.32\textwidth}
        \centering
        \includegraphics[width=\textwidth]{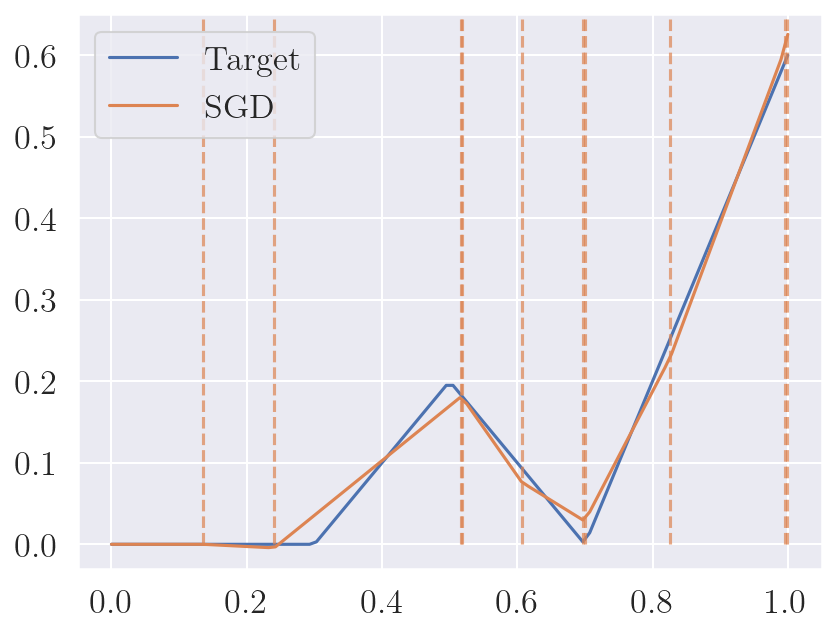}
        \caption{After training in the two-timescale regime}
    \end{subfigure}
    \hfill
    \begin{subfigure}[t]{0.32\textwidth}
        \centering
        \includegraphics[width=\textwidth]{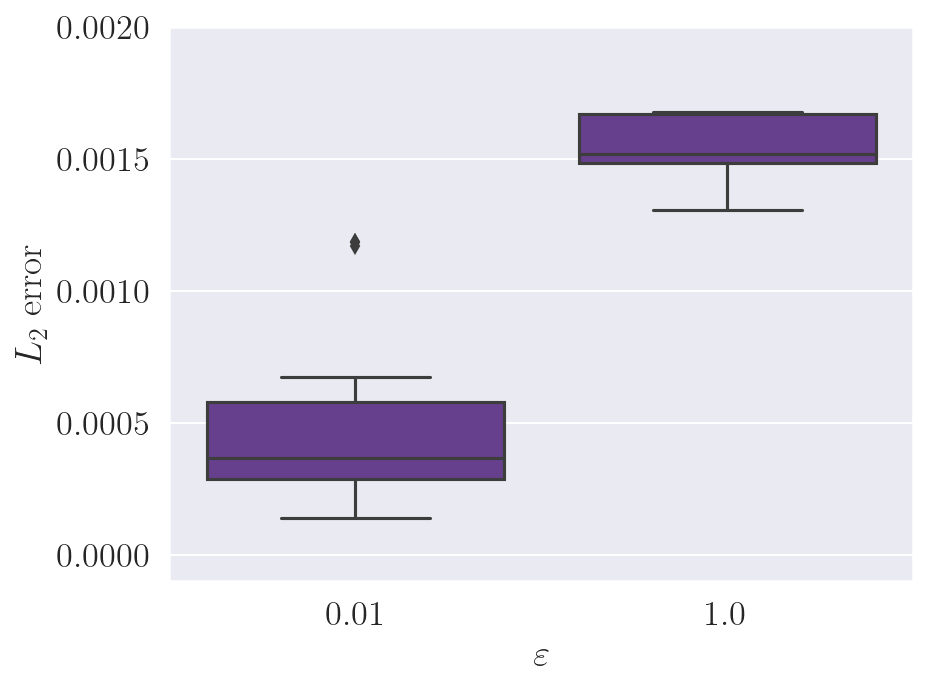}
        \caption{$L_2$ distance with the target as a function of $\varepsilon$, $20$ repeats}
    \end{subfigure}
    \hfill
    \caption{Experiment with ReLU activation and piecewise-affine targets. In the standard regime, some kinks of the target are not covered by neurons. This is not the case in the two-timescale regime.}
    \label{fig:relu}
\end{figure}

\section{Additional remarks} 
\label{apx:remarks}

\subsection{Counter-example in the case of non-uniform initialization}
\label{subsec:counter-example}

It is easy to craft an example where gradient flow in the two-timescale limit does not converge to the global minimum if the positions of the neurons are not drawn uniformly at random at initialization. Take for instance the case where there are three pieces in the target, the leftmost piece at level $1$, the second one at level $-1$ and the third one at level $0$. Consider the case where all neurons are in the third piece at initialization. Then (in the two-timescale limit) the bias and the weights are all instantly equal to zero, which is the solution of the optimization problem in $a$. Looking at \eqref{eq:def-gradient-u}, this shows that the positions do not move, and thus that the neurons remain in this local minimum.

This example may seem artificial, but in fact, more generally, the case where two consecutive pieces are not covered by any neuron often corresponds to a local minimum of the two-timescale dynamics. This is why we require neurons on every piece at initialization to avoid falling in this local minimum.

\subsection{Ideas of proof for SGD}
\label{subsec:ideas-sgd}

The proof of Theorem \ref{thm:main} consists in bounding the difference between the actual dynamics and the dynamics that we would have if the weights were given by $a_0^*(u(t))$, that is, by the weights in the two-timescale limit with $\eta = 0$. This requires to bound two errors terms, one coming from the fact that $\eta > 0$, and one coming from the fact that $\varepsilon > 0$. By making $\eta$ and $\epsilon$ small enough, we ensure that both terms are small, which in turn allows to describe the trajectories of the weights and of the positions throughout the dynamics.

Moving from gradient flow to stochastic gradient descent induces two additional error terms: a discretization error, and a noise error. However, taking small enough step size allows to bound both error terms and thus ensures that (with high probability) the SGD dynamics are close to the gradient flow dynamics. For this reason, we believe that our proof structure (namely, bounding the difference with the two-timescale limit as in Proposition \ref{prop:final-control-gf-fast}, then showing that this bound allows to describe the trajectories of the weights and of the positions as in Theorem \ref{thm:main}) should adapt to the SGD case, at the condition that the step size is small enough.

\end{document}